\title{Biased Games On Random Boards}
\author{\quad{Asaf Ferber}\thanks{School of Mathematical Sciences,
Raymond and Beverly Sackler Faculty of Exact Sciences, Tel Aviv
University, Tel Aviv, 69978, Israel. Email:
ferberas@post.tau.ac.il.} \quad{Roman Glebov}\thanks{Institut
f\"{u}r Mathematik, Freie Universit\"at Berlin, Arnimallee 3-5,
D-14195 Berlin, Germany. Email: glebov@math.fu-berlin.de. Research
supported by DFG within the research training group ``Methods for
Discrete Structures".} \quad{Michael Krivelevich}\thanks{School of
Mathematical Sciences, Raymond and Beverly Sackler Faculty of Exact
Sciences, Tel Aviv University, Tel Aviv, 69978, Israel. Email:
krivelev@post.tau.ac.il. Research supported in part by USA-Israel
BSF Grant 2010115 and by grants 1063/08, 912/12 from Israel Science
Foundation.} \quad{Alon Naor}
\thanks{School of Mathematical Sciences, Raymond and Beverly Sackler
Faculty of Exact Sciences, Tel Aviv University, Tel Aviv, 69978,
Israel. Email: alonnaor@post.tau.ac.il.}}
\newif\ifnotesw\noteswtrue
\newtheorem{theorem}{Theorem}[section]
\newtheorem{lemma}[theorem]{Lemma}
\newtheorem{corollary}[theorem]{Corollary}
\newtheorem{claim}[theorem]{Claim}
\newtheorem{conjecture}[theorem]{Conjecture}
\newtheorem{definition}[theorem]{Definition}
\let\eps=\varepsilon
\let\theta=\vartheta
\let\rho=\varrho
\let\sigma=\varsigma
\let\polishlcross=\l
\def\l{\ifmmode\ell\else\polishlcross\fi}
\newcommand{\cF}{{\cal F}}
\newcommand{\gnp}{\ensuremath{G(n,p)}}
\newcommand{\Bin}{\ensuremath{\textrm{Bin}}}
\newcommand{\danger}{\mathsf {dang}}
\newcommand{\avdanan}{\overline{\danger}}
\newcommand{\Exp}{\mathbb{E}}
\newenvironment{proof}{\noindent{\bf Proof.\,}}{\hfill$\Box$}
\begin{document}
\maketitle

\begin{abstract}
In this paper we analyze biased Maker-Breaker games and
Avoider-Enforcer games, both played on the edge set of a random
board $G\sim \gnp$. In Maker-Breaker games there are two players,
denoted by Maker and Breaker. In each round, Maker claims one
previously unclaimed edge of $G$ and Breaker responds by claiming
$b$ previously unclaimed edges. We consider the Hamiltonicity game,
the perfect matching game and the $k$-vertex-connectivity game,
where Maker's goal is to build a graph which possesses the relevant
property. Avoider-Enforcer games are the reverse analogue of
Maker-Breaker games with a slight modification, where the two
players claim at least $1$ and at least $b$ previously unclaimed
edges per move, respectively, and Avoider aims to avoid building a
graph which possesses the relevant property.

Maker-Breaker games are known to be ``bias-monotone", that is, if
Maker wins the $(1,b)$ game, he also wins the $(1,b-1)$ game.
Therefore, it makes sense to define the \emph{critical bias} of a
game, $b^*$, to be the ``breaking point" of the game. That is, Maker
wins the $(1,b)$ game whenever $b\leq b^*$ and loses otherwise. An
analogous definition of the critical bias exists for
Avoider-Enforcer games: here, the critical bias of a game $b^*$ is
such that Avoider wins the $(1,b)$ game for every $b > b^*$, and
loses otherwise.

We prove that, for every $p=\omega(\frac{\ln n}{n})$, $G\sim\gnp$ is
typically such that the critical bias for all the aforementioned
Maker-Breaker games is asymptotically $b^*=\frac{np}{\ln n}$. We
also prove that in the case $p=\Theta(\frac{\ln n}{n})$, the
critical bias is $b^*=\Theta(\frac{np}{\ln n})$. These results
settle a conjecture of Stojakovi\'c and Szab\'o. For
Avoider-Enforcer games, we prove that for $p=\Omega(\frac{\ln
n}{n})$, the critical bias for all the aforementioned games is
$b^*=\Theta(\frac{np}{\ln n})$.
\end{abstract}

\section{Introduction}
Let $X$ be a finite set and let $\mathcal F\subseteq 2^X$. In an
$(a,b)$ Maker-Breaker game $\mathcal F$, the two players -- Maker
and Breaker -- alternately claim $a$ and $b$ previously unclaimed
elements of \emph{the board} $X$, respectively. Maker's goal is to
claim all the elements of some \emph{target set} $F\in \mathcal F$.
If Maker does not fully claim any target set by the time all board
elements are claimed, then Breaker wins the game. When $a=b=1$, the
game is called \emph{unbiased}, otherwise it is called
\emph{biased}. It is easy to see that being the first player is
never a disadvantage in a Maker-Breaker game: indeed, suppose the
first player has some strategy as the second player. He can play
arbitrarily in his first move and pretend that he didn't make this
move and he now starts a new game as a second player; whenever his
strategy tells him to claim some edge he had previously claimed he
just claims arbitrarily some edge. So, in order to prove that Maker
wins a certain game, it is enough to prove that he can win as a
second player. Throughout the paper we assume that Maker is the
\emph{second} player to move.

In an $(a,b)$ Avoider-Enforcer game played on a hypergraph $\mathcal
F\subseteq 2^X$, the two players are called Avoider and Enforcer,
alternately claim \emph{at least} $a$ and \emph{at least} $b$
previously unclaimed elements of the board $X$ per move,
respectively. Avoider loses the game if at some point during the
game he fully claims all the elements of some target set $F\in
\mathcal F$. Otherwise, Avoider wins.

In both Maker-Breaker games and Avoider-Enforcer games, we may
assume that there are no $F_1, F_2 \in \cF$ such that $F_1 \subset
F_2$, since in this case Maker wins (or Avoider loses) once he
claims all the elements in $F_1$, and so the two $(a,b)$ games $\cF$
and $\cF \setminus \{F_2\}$ are identical.

It is natural to play positional games on the edge set of a graph
$G$. In this case, the board is $X=E(G)$, and the target sets are
all the edge sets of subgraphs $H\subseteq G$ which possess some
given monotone increasing graph property $\mathcal P$. In the
\emph{connectivity} game ${\mathcal C}(G)$, the target sets are all
edge sets of spanning trees of $G$. In the \emph{perfect matching}
game ${\mathcal M}(G)$ the target sets are all sets of $\lfloor
|V(G)|/2 \rfloor$ independent edges of $G$. Note that if $n$ is odd,
then such a matching covers all vertices of $G$ but one. In the
\emph{Hamiltonicity} game ${\mathcal H}(G)$ the target sets are all
edge sets of Hamilton cycles of $G$. Given a positive integer $k$,
in the \emph{$k$-connectivity} game ${\mathcal C}^k(G)$ the target
sets are all edge sets of $k$-vertex-connected spanning subgraphs of
$G$.

Maker-Breaker games played on the edge set of the complete graph
$K_n$ are well studied. In this case, many natural unbiased games
are drastically in favor of Maker (see, e.g., \cite{Lehman},
\cite{HKSS}, \cite{HS}, \cite{FH}). Hence, in order to even out the
odds, it is natural to give Breaker more power by increasing his
\emph{bias} (that is, to play a $(1,b)$ game instead of a $(1,1)$
game), and/or to play on different types of boards.

Maker-Breaker games are \emph{bias monotone}. That means that if
Maker wins some game with bias $(a,b)$, he also wins this game with
bias $(a',b')$, for every $a' \geq a$, $b' \leq b$. Similarly, if
Breaker wins a game with bias $(a,b)$, he also wins this game with
bias $(a',b')$, for every $a' \leq a$, $b' \geq b$. Avoider-Enforcer
games are also bias monotone in the version considered in this paper
(this version is called the \emph{monotone} version, as opposed to
the \emph{strict} version, where Avoider and Enforcer claim exactly
$a$ and $b$ elements per move, respectively. The strict version is
not bias monotone.). It means that if Avoider wins some game with
bias $(a,b)$, he also wins this game with bias $(a',b')$, for every
$a' \leq a$, $b' \geq b$, and that if Enforcer wins a game with bias
$(a,b)$, he also wins this game with bias $(a',b')$, for every $a'
\geq a$, $b' \leq b$.

This bias monotonicity allows us to define the \emph{critical bias}
(also referred to as the \emph{threshold bias}): for a given game
$\cF$, the critical bias $b^*$ is the value for which Maker wins the
game $\cF$ with bias $(1,b)$ for every $b < b^*$, and Breaker wins
the game $\cF$ with bias $(1,b)$ for every $b \geq b^*$. Similarly,
this is the value for which Avoider wins the game $\cF$ with bias
$(1,b)$ for every $b \geq b^*$, and that Enforcer wins the game
$\cF$ with bias $(1,b)$ for every $b < b^*$.

In their seminal paper \cite{CE}, Chvat\'{a}l and Erd\H{o}s proved
that playing the $(1,b)$ connectivity game on the edge set of the
complete graph $K_n$, for every $\eps > 0$, Breaker wins for every
$b\geq \frac{(1+\varepsilon)n}{\ln n}$, and Maker wins for every
$b\leq \frac{n}{(4+\varepsilon)\ln n}$. They conjectured that
$b=\frac{n}{\ln n}$ is (asymptotically) the threshold bias for this
game. Gebauer and Szab\'o proved in \cite{GS} that this is indeed
the case. Later on, Krivelevich proved in \cite{K} that
$b=\frac{n}{\ln n}$ is also the threshold bias for the Hamiltonicity
game.

Stojakovi\'c and Szab\'o suggested in \cite{SS} to play
Maker-Breaker games on the edge set of a random board $G\sim \gnp$.
In this well known and well studied model, the graph $G$ consists of
$n$ labeled vertices, and each pair of vertices is chosen to be an
edge in the graph independently with probability $p$. They examined
some games on this board such as the connectivity game, the perfect
matching game, the Hamiltonicity game and building a $k$-clique
game. Since then, much progress has been made in understanding
Maker-Breaker games played on $G\sim \gnp$. For example, it was
proved in \cite{BFHK} that for $p=\frac{(1+o(1))\ln n}{n}$, $G\sim
\gnp$ is typically (i.e. with probability tending to $1$ as $n$
tends to infinity) such that Maker wins the $(1,1)$ games $\mathcal
M(G)$, $\mathcal H(G)$ and $\mathcal C_k(G)$. Moreover, the proofs
in \cite{BFHK} are of a ``hitting time" type. It means that in the
random graph process (see \cite{Boll}), typically at the moment the
graph reaches the needed minimum degree for Maker to win the desired
game, Maker indeed wins this game. Later on, in \cite{CFKL}, fast
winning strategies for Maker in various games played on $G\sim \gnp$
were considered, and in \cite{MS} a hitting time result was
established for the ``building a triangle" game, and it was proved
that the threshold probability for the property ``Maker can build a
$k$-clique" game is $p=\Theta\left(n^{-2/(k+1)}\right)$.

In \cite{SS}, Stojakovi\'c and Szab\'o conjectured the following:

\begin{conjecture}[\textbf{\cite{SS}, Conjecture 1}]\label{conjecture1}
There exists a constant $C$ such that for every $p\geq \frac{C\ln
n}{n}$, a random graph $G\sim \gnp$ is typically such that the
threshold bias for the game $\mathcal H(G)$ is
$b^*=\Theta\left(\frac{np}{\ln n}\right)$.
\end{conjecture}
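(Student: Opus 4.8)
The plan is to establish matching bounds: there is a sufficiently large absolute constant $C$ and constants $0<c_1<c_2$ so that, for every $p\ge C\ln n/n$, a graph $G\sim\gnp$ is \whp{} such that Maker wins the $(1,b)$ game $\cH(G)$ for every $b\le c_1\,np/\ln n$ and Breaker wins $\cH(G)$ for every $b\ge c_2\,np/\ln n$; by the bias monotonicity of Maker--Breaker games this gives $b^*=\Theta(np/\ln n)$ \whp, which is exactly the conjecture. Before playing, I would record the pseudorandom properties of $G$ used by both players, all holding \whp{} once $C$ is large: (i) $\deg_G(v)=(1+o(1))np$ for every $v$ (this is precisely where $np\ge C\ln n$ with $C$ absolute is needed); (ii) $G$ has no subset of $s\le s_0$ vertices spanning more than $\frac{1}{6}s\cdot np$ edges for a suitable threshold $s_0$, equivalently $|N_G(S)|\ge 2|S|$ for every small $S$; (iii) every vertex set of size $\ge\eps n$ spans $\Omega(n^2p)$ edges of $G$, and every two disjoint sets of size $\ge\eps n$ are joined by $\Omega(n^2p)$ edges of $G$. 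Properties (i)--(iii) are routine first-moment / Chernoff computations.

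\textbf{Breaker's side.} For the upper bound on $b^*$ I would have Breaker claim every edge of $G$ incident to some fixed vertex, leaving Maker's graph with an isolated vertex and hence non-Hamiltonian. I would phrase this as a box game: treat the (overlapping) edge sets $E_G(v)$, $v\in V$, as $n$ boxes, each of size at most $\Delta(G)=(1+o(1))np$; in each round Breaker claims $b$ board elements while Maker's single edge meets at most two boxes. By the box-game analysis of Chv\'atal--Erd\H os, in the quantitative form used by Gebauer--Szab\'o, the player claiming $b$ elements per round can exhaust one of $N$ boxes of size $\le m$ as soon as $b\ge(1+o(1))m/\ln N$; with $N=n$ and $m=(1+o(1))np$ this holds for $b\ge c_2\,np/\ln n$. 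The overlaps only help Breaker: an edge $uv$ taken while working on box $u$ also shrinks box $v$, and a box whose remaining elements Breaker owns counts as full. (One checks the game is long enough, but $|E(G)|\approx n^2p/2$ far exceeds the $O(n)$ rounds Breaker needs.) Hence $b^*\le c_2\,np/\ln n$.

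\textbf{Maker's side (the crux, and the main obstacle).} Fix $b\le c_1\,np/\ln n$ with $c_1$ small. Maker plays in two phases. In Phase~1 he builds a spanning subgraph $M\subseteq G$ that is a connected strong expander: $|N_M(S)|\ge 2|S|$ for every $S$ with $|S|\le\eps n$. Over the whole game Maker collects $\Theta(n\ln n)$ edges, so the target graph (average degree $\Theta(\ln n)$, no dense spots, uniformly spread) exists; the difficulty is producing it against an adversary of bias $b$. For this Maker uses a potential-function (``danger'') strategy: to each vertex he assigns a level recording how many $M$-edges it has and a danger value measuring how close Breaker is to driving the reservoir of free $G$-edges around small sets below what Maker still needs, and he always claims an edge of $G$ that decreases the total danger by the most. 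Property~(ii) guarantees such a move is always available and genuinely useful, while a bias $b\le c_1\,np/\ln n$ is, by an Erd\H os--Selfridge / box-game type estimate, too small for Breaker to push the total danger to its critical value before Maker finishes; handling the \emph{non-uniform} degree sequence of $G$ inside this estimate is the delicate point. In Phase~2 Maker turns $M$ into a Hamilton cycle via P\'osa's rotation--extension technique (adding edges preserves expansion, so $M$ stays a connected expander throughout): a non-Hamiltonian $\eps n$-expander admits, for any longest path, linear-sized sets of rotation endpoints, hence a bipartite-type block of $\Omega(n^2)$ booster pairs, of which $\Omega(n^2p)$ are edges of $G$ by property~(iii); combined with invariants from Phase~1 that cap the number of Breaker edges between any two linear sets, this leaves a booster edge of $G$ unclaimed, Maker claims it, and iterating at most $n$ times produces a Hamilton cycle. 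This gives $b^*\ge c_1\,np/\ln n$ and completes the proof. The hard part throughout is the Phase~1 potential argument together with the matching random-graph lemmas; the booster step and the box game are essentially adaptations of the known $K_n$ arguments.
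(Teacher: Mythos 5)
Your proposal follows the same overall architecture as the paper: bound $b^*$ from above via a Breaker strategy that isolates a vertex using the Chv\'atal--Erd\H{o}s box game, and bound it from below via a two-phase Maker strategy (build an expander via a danger-potential argument, then finish with boosters/rotation--extension). That is exactly the paper's plan. However, there are two places where your sketch glides over the ideas the paper actually had to invent, and in both cases the gap is substantive rather than cosmetic.

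On the Breaker side, you propose to play the box game directly on the $n$ overlapping boxes $E_G(v)$, $v\in V$, asserting that ``the overlaps only help Breaker.'' This conflates two different effects. It is true that a Breaker edge $uv$ shrinks both boxes $E_G(u)$ and $E_G(v)$. But a Maker edge $uv$ \emph{destroys} both boxes (neither can ever be fully claimed by Breaker once Maker owns an edge in it), so in the box-game translation BoxBreaker is destroying up to two boxes per round, which is not the game analyzed by Chv\'atal--Erd\H{o}s (Theorem~\ref{boxgame}). That variant is probably still winnable for BoxMaker with a bias of the same order of magnitude, but you would need to reprove the box-game estimate for it, or reduce to disjoint boxes. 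The paper sidesteps this entirely: Breaker first spends a stage constructing a set $C$ of roughly $n/\ln^2 n$ vertices that are isolated in Maker's graph, have typical degree, and (crucially) have \emph{all internal $G$-edges already claimed by Breaker}; the boxes $A_v$ for $v\in C$ are then genuinely pairwise disjoint and each Maker move destroys at most one, so Theorem~\ref{boxgame} applies directly with $m=|C|$ and $\ell\le(1+\eps/2)np$. This is where $(P1)$ and $(P9)$ of Theorem~\ref{PropertiesGnpDense} get used, to guarantee Stage~I can always proceed.

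On the Maker side, your Phase~1 description --- ``a danger value measuring how close Breaker is to driving the reservoir of free $G$-edges around small sets below what Maker still needs, and he always claims an edge of $G$ that decreases the total danger by the most'' --- does not match what the paper actually does, and more importantly it skips the step that makes the argument work. The paper's danger function is the simple per-vertex potential $\danger(v)=d_B(v)-2b\,d_M(v)$ of Gebauer--Szab\'o; the min-degree strategy built on it (Theorem~\ref{minimumdegree}) only guarantees a minimum degree $\Theta(\ln n/\ln\ln n)$ graph, which by itself is nowhere near an expander. To get expansion the paper introduces a genuinely new device: whenever Maker must claim a free edge at the currently most dangerous vertex, he picks that edge \emph{uniformly at random} among the available free edges there. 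A union bound over all candidate non-expanding sets $A$ (using properties $(P2)$, $(P3)$ and the guaranteed $\eps np/3$ free edges at the active vertex from Theorem~\ref{minimumdegree}) then shows that with positive probability the resulting graph is an $(\Theta(n/\ln\ln n),2)$-expander; since the game is a finite perfect-information game, a deterministic such strategy exists. A separate Stage~II boosts this to an $(n/5,2)$-expander via Beck's criterion (Theorem~\ref{bwin}) with the trick of fake moves. Nothing in your Phase~1 sketch explains how the potential by itself forces expansion rather than just minimum degree; this is the crux, and it is where a ``straightforward'' attempt along the lines you describe would get stuck. The booster phase is then essentially as you say (Lemma~\ref{l:expander_booster} and Lemma~\ref{l:enoughboosters}).

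Finally, a small but real point: the conjecture has to be handled separately in the regime $p=\Theta(\ln n/n)$, where the threshold bias is a bounded constant. The paper does this in Theorem~\ref{mainSparse} by a mild variant of the min-degree analysis, since the harmonic-sum estimates that give the sharp $1\pm\eps$ constants only hold for $p=\omega(\ln n/n)$ (Lemma~\ref{HarmonicDeg}). Your proposal does not distinguish the two regimes, and you would need to check that the danger-potential bookkeeping still closes when $b$ is $O(1)$ and constants do not absorb lower-order terms.
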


In this paper we prove Conjecture~\ref{conjecture1}, and in fact,
for $p=\omega\left(\frac{\ln n}{n}\right)$ we prove the following
stronger statement:

\begin{theorem} \label{main1}
Let $p=\omega\left(\frac{\ln n}{n}\right)$. Then $G\sim \gnp$ is
typically such that $\frac{np}{\ln n}$ is the asymptotic threshold
bias for the games $\mathcal M(G)$, $\mathcal H(G)$ and $\mathcal
C_k(G)$.
\end{theorem}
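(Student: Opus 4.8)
The plan is to pin down both endpoints of the threshold interval, with everything routed through a single ``good expander'' notion. Call a subgraph $H\subseteq G$ a \emph{good expander} if $\delta(H)=\omega(\log n)$, every $S\subseteq V$ with $|S|\le c\log n/p$ has $|N_H(S)\setminus S|\ge (k+2)|S|$, and every two disjoint sets of size at least $c\log n/p$ are joined by an edge of $H$. A standard P\'osa rotation--extension argument shows that a good expander is Hamiltonian; the same three properties, with the $+k$ slack, survive the deletion of any $k-1$ vertices and hence force $k$-vertex-connectivity; and a Hamilton cycle contains a matching of size $\lfloor n/2\rfloor$. So it suffices to prove: (a) for $b\le(1-\varepsilon)np/\ln n$, Maker (as second player) can guarantee that his graph $M$ is a good expander; (b) for $b\ge(1+\varepsilon)np/\ln n$, Breaker can prevent $M$ from being connected, a fortiori from being $k$-connected, Hamiltonian, or perfectly matchable.

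Before either part I would record the \whp\ properties of $G\sim\gnp$ with $p=\omega(\ln n/n)$ that will be used as deterministic hypotheses on the board: since $np=\omega(\ln n)$, Chernoff plus a union bound give $(1+o(1))np\ge\Delta(G)\ge\delta(G)\ge(1-o(1))np$; every vertex set of size $s\le\varepsilon n$ spans at most $\varepsilon s\cdot np$ edges of $G$ and, for $s\le\Theta(\log n/p)$, has external $G$-neighbourhood of size at least $(1-\varepsilon)s\cdot np$; and any two disjoint sets of size at least $c\log n/p$ span many edges of $G$. These are exactly the ingredients that turn ``Maker owns a large enough share of the right edges'' into ``$M$ is a good expander''.

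For (b), the easy half, Breaker plays to isolate one vertex of $M$. By the degree bound above all $n$ vertices have at most $(1+\varepsilon/3)np$ incident $G$-edges, so Breaker is in the Chvat\'{a}l--Erd\H{o}s box game with $\approx n$ boxes each of size $\le(1+\varepsilon/3)np$; the player claiming many elements per move (here Breaker, with bias $b$) can take an entire box as soon as $b\ge(1+o(1))\frac{(1+\varepsilon/3)np}{\ln n}$, which holds since $b\ge(1+\varepsilon)np/\ln n$. Then some vertex $v$ is isolated in $M$, killing connectivity, $k$-connectivity, Hamiltonicity, and (as $v$ is exposed) the perfect matching; for $\mathcal M(G)$ with $n$ odd, Breaker instead isolates two vertices, which the same box game permits with the same asymptotic bias.

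Part (a) is the heart of the argument and the step I expect to fight with. Maker plays a danger-driven greedy strategy: to each vertex he assigns a value $\danger(v)$ that grows whenever Breaker claims a $G$-edge at $v$, weighted so as to penalise configurations that could trap $v$ inside a small non-expanding set, and in each round he claims a free $G$-edge incident to a currently maximal-danger vertex, using a tie-break that also spreads his edges out. The analysis controls a global potential assembled from $\degb(\cdot)$, $\degm(\cdot)$ and a correction term (for instance a count $\ed$ of doubly dangerous edges), equivalently the average danger $\avdanan$: Breaker's $b$-move raises the potential by a bounded amount, Maker's single move at the extremal vertex decreases the \emph{maximum}, and an amortisation over the $\nga=(1+o(1))\binom{n}{2}p/(b+1)$ rounds shows that the maximum danger never reaches the level that would create a bad small set or leave a pair of large sets unjoined in $M$. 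The numerics are those of the box game on $\sim n$ boxes of size $\sim np$, and they close precisely because $b<(1-\varepsilon)np/\ln n$, which forces $\degm(v)\ge(1-o(1))\log n$ at every $v$ and, via the pseudorandomness of $G$ recorded above, the full expansion. The delicate points are to choose $\danger(\cdot)$ so that the potential estimate is uniform over the whole range $np=\omega(\log n)$, and to arrange that it delivers both P\'osa regimes at once --- expansion of sets up to $\Theta(\log n/p)$ and an $M$-edge between every two larger sets --- rather than a bare minimum-degree guarantee. Combining (a) and (b) yields $b^*=(1+o(1))np/\ln n$ for $\mathcal M(G)$, $\mathcal H(G)$ and $\mathcal C_k(G)$, which is the theorem.
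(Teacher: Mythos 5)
Your overall architecture is the same as the paper's --- prove an upper bound via Breaker isolating a vertex, and a lower bound via Maker building an expander-type graph --- but your treatment of the lower bound has a genuine gap, and your box-game reduction on the upper bound glosses over a real subtlety. On the upper bound: the $n$ ``boxes'' (the stars of $G$) are not disjoint, so you cannot invoke the Chv\'atal--Erd\H{o}s box game on $n$ boxes directly, and worse, Maker can touch a vertex and permanently spoil its star as a target. What the paper actually does (Theorem~\ref{BreakerIsolatingAvertex}, following Chv\'atal--Erd\H{o}s) is a two-stage strategy: Breaker first grows a set $C$ of size $n/\ln^2 n$ of vertices that are isolated in $M$, have bounded $G$-degree, and such that all $G$-edges inside $C$ are already Breaker's; only then does he play the box game on the disjoint stars $E(v,V\setminus C)$, $v\in C$. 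Property~(P9) is exactly what makes the first stage feasible. Your sketch as written would need this missing reduction.

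On the lower bound, the crux --- which you explicitly flag as the step you ``expect to fight with'' --- is precisely where the paper departs from what you describe, and I don't think the single deterministic danger-potential argument you propose can be made to deliver expansion directly. The paper's Maker plays the Gebauer--Szab\'o danger strategy only to get minimum degree $\Theta(\ln n)$ and a lot of remaining free edges per vertex (Theorem~\ref{minimumdegree}); to upgrade that to expansion, the paper then makes a ``seemingly minor but crucial'' change --- the edge incident to the maximal-danger vertex is chosen \emph{uniformly at random} --- and proves expansion by a union bound over small bad sets $A$, using the pseudorandomness of $G$ to argue that each random Maker edge lands inside $A\cup B$ with small probability. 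A purely deterministic choice of $\danger(\cdot)$, however cleverly weighted, is not known to guarantee expansion here, which is why the paper needs this probabilistic twist together with two further cleanup stages (a Beck's-criterion argument to join all pairs of large sets, upgrading to an $(n/5,2)$-expander, and then a booster-claiming stage in place of your one-shot ``good expander $\Rightarrow$ Hamiltonian'' appeal). Your ``good expander'' thresholds $|S|\le c\log n/p$ also do not match the parameters of the HKS-type criterion you would need, and the bare P\'osa rotation--extension lemma gives only a long path, not a Hamilton cycle, without the booster (Lemma~\ref{l:expander_booster}, Lemma~\ref{l:enoughboosters}) or HKS (Lemma~\ref{HamCrit}) machinery. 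So while your decomposition into (a) and (b) is the right shape, the actual content of (a) --- the strategy, the randomization, the union bound, and the two subsequent stages --- is missing and cannot be filled in the way you outline.
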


In order to prove Theorem~\ref{main1} we prove the following two
theorems:

\begin{theorem}\label{BreakerIsolatingAvertex}
Let $0 \leq p \leq 1$, $\varepsilon>0$ and $b \geq
(1+\eps)\frac{np}{\ln n}$. Then $G\sim \gnp$ is typically such that
in the $(1,b)$ Maker-Breaker game played on $E(G)$, Breaker has a
strategy to isolate a vertex in Maker's graph, as a first or a
second player.
\end{theorem}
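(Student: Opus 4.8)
The plan is to exhibit a vertex of relatively low degree in $G$ and have Breaker spend his moves claiming all of its incident edges before Maker can grab one of them. First I would record what we need from $G\sim\gnp$: with the stated bias $b\ge(1+\eps)\frac{np}{\ln n}$, it suffices that $G$ typically contains a vertex $v$ with $\deg_G(v)\le(1+\eps/2)np/(1+o(1))$ --- in fact the minimum degree is concentrated around $np$ when $p=\omega(\ln n/n)$, and for $p=\Theta(\ln n/n)$ it is $\Theta(np)$, so such a $v$ exists whp; moreover we may take $v$ so that its neighbourhood is not too tightly clustered, but for the isolation argument the degree bound alone is what matters. The key inequality to aim for is $b\cdot(\text{number of Breaker moves needed}) \ge \deg_G(v)$ while Maker has made strictly fewer moves, which reduces to $\deg_G(v) \le b\cdot(\deg_G(v)-1)/(\text{something})$ --- concretely, if Breaker always answers by taking $b$ still-unclaimed edges at $v$, then after $t$ rounds at most $t$ edges at $v$ are Maker's and at least $\min\{bt,\ \deg_G(v)-t\}$ are Breaker's, so Breaker finishes claiming the whole star $E_G(v)$ having allowed Maker at most $\lceil \deg_G(v)/(b+1)\rceil$ edges there; the point is that $\deg_G(v)/(b+1) < \deg_G(v)$, i.e. Maker never gets to claim all of $E_G(v)$, hence in particular Maker misses at least one edge at $v$ and $v$ is isolated in Maker's graph.

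Next I would make the strategy precise and robust. Breaker fixes $v$ at the start. Breaker's strategy: as long as $E_G(v)$ contains an unclaimed edge, Breaker claims $b$ unclaimed edges of $E_G(v)$ (or all remaining unclaimed edges of $E_G(v)$, if fewer than $b$ remain, filling the rest of his move arbitrarily); once every edge of $E_G(v)$ is claimed, Breaker plays arbitrarily. One checks that this is a legal strategy and that it terminates the ``race'' at $v$ within $\lceil\deg_G(v)/(b+1)\rceil$ rounds, by which time Maker owns at most $\lceil\deg_G(v)/(b+1)\rceil$ of the edges at $v$. Since $b\ge(1+\eps)np/\ln n$ and $\deg_G(v)\le(1+o(1))np$, we get $\deg_G(v)/(b+1)\le(1+o(1))\ln n/(1+\eps) < \deg_G(v)$ for $n$ large (indeed it is $o(\deg_G(v))$), so Maker is left with at least one unclaimed, hence eventually Breaker-claimed, edge incident to $v$ --- Maker's graph has $v$ isolated. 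Being first or second player only shifts the count by one move and does not affect the conclusion.

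The main obstacle is not the isolation argument, which is essentially the classical box-game / degree argument, but pinning down the probabilistic input uniformly over the whole range $0\le p\le1$ as the statement demands: for very small $p$ (e.g. $p=o(\ln n/n)$) the graph is typically disconnected and even has isolated vertices already, so Breaker wins trivially; in the interesting window $p=\Omega(\ln n/n)$ one needs the upper tail bound on the minimum degree, $\delta(G)\le(1+o(1))np$ whp (a routine first-moment / Chernoff computation), together with $\Delta$ not being so large that Breaker cannot keep pace --- but since we get to choose $v$ to be a minimum-degree vertex, only the value $\delta(G)$ enters. I would therefore split into the two cases ($p$ small enough that $G$ already typically has an isolated vertex, versus $p$ large), and in the second case the whole proof is: pick a minimum-degree vertex, run the star-claiming strategy, and verify the one inequality $\delta(G)/(b+1)<\delta(G)$, equivalently $b\ge 1$, with the sharper bound $b\ge(1+\eps)np/\ln n$ guaranteeing Maker in fact claims only a $o(1)$-fraction of the star, which is more than enough.
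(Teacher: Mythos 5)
Your proposal has a fundamental logical error that makes the strategy fail on Maker's very first move.

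You argue that after Breaker runs the race at a single fixed vertex $v$, ``Maker is left with at least one unclaimed, hence eventually Breaker-claimed, edge incident to $v$ --- Maker's graph has $v$ isolated.'' But this conflates two very different statements. For $v$ to be isolated in Maker's graph, Maker must claim \emph{zero} edges incident to $v$; it is not enough that Maker fails to claim \emph{all} of them. Your count shows Maker owns at most $\lceil \deg_G(v)/(b+1)\rceil$ edges of the star, which is strictly less than $\deg_G(v)$ --- but $\lceil \deg_G(v)/(b+1)\rceil \geq 1$ in all nontrivial cases, so this gives Maker at least one edge at $v$ and hence $v$ is \emph{not} isolated. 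Indeed, if Breaker commits to a single vertex $v$ in advance, Maker simply claims an edge at $v$ in his first move and that vertex is lost forever. For the single-vertex plan to succeed, Breaker would need to finish the whole star before Maker ever moves, i.e.\ $\deg_G(v) \leq b = (1+\eps)np/\ln n$; but for $p \geq \ln n/n$ the minimum degree of $G(n,p)$ is typically $\Theta(np)$, a factor $\ln n$ too large.

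This is exactly where the box game is indispensable, and it is the approach taken in the paper. Breaker must first spend a preliminary stage building a large set $C$ of $\Omega(n/\ln^2 n)$ vertices, each still isolated in Maker's graph, with all $G$-edges inside $C$ already claimed by Breaker and each having degree at most $(1+\eps/2)np$. He then plays the box game of Chv\'atal--Erd\H{o}s on the disjoint stars $\{A_v : v \in C\}$. Because BoxBreaker (here Maker) destroys only one box per round, and because the number of boxes $|C|$ is polynomial in $n$, the box-game criterion $b > \ell/\ln m$ with $\ell \leq (1+\eps/2)np$ and $m = |C| \geq n/\ln^2 n$ is satisfied by $b \geq (1+\eps)np/\ln n$. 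BoxMaker (Breaker) then fully claims \emph{some} star $A_v$ with $v \in C$ before Maker ever touches $v$, and that is the vertex which ends up isolated in Maker's graph. Without maintaining many candidate vertices, there is no way to absorb Maker's ability to ``touch'' whichever vertex Breaker commits to.

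Your remarks on the very sparse regime $p = o(\ln n/n)$ (where $G$ already has isolated vertices whp, so Breaker wins trivially) are correct and match what the paper does, but the core of the argument for $p \geq \ln n/n$ needs the box-game machinery that your proposal omits.
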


\begin{theorem} \label{mainDense}
Let $p=\omega\left(\frac{\ln n}{n}\right)$, $\varepsilon>0$ and
$b=(1-\varepsilon)\frac{np}{\ln n}$. Then  $G\sim \gnp$ is typically
such that Maker has a winning strategy in the $(1,b)$ games
$\mathcal M(G)$, $\mathcal H(G)$, and $\mathcal C_k(G)$ for a fixed
positive integer $k$, as a first or a second player.
\end{theorem}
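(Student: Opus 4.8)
The plan is to let Maker build a spanning subgraph of $G$ that is a good expander and then convert it into a Hamilton cycle by appending a bounded number of \emph{booster} moves. Since being the first player is never a disadvantage, it is enough to describe Maker's strategy as the \emph{second} player; and since a Hamilton cycle on $n$ vertices contains $\lfloor n/2\rfloor$ pairwise disjoint edges and (after deleting one edge) a spanning tree, the same strategy that wins $\mathcal H(G)$ also wins $\mathcal M(G)$ and $\mathcal C(G)$, while $\mathcal C_k(G)$ will need a mild variant addressed at the end. For the rest I would fix a graph $G\sim\gnp$, $p=\omega(\ln n/n)$, with the following typical properties (all routine via first-moment and Chernoff bounds): $\delta(G)=(1-o(1))np$; every set of at most $\rho n$ vertices, for a small absolute constant $\rho$, is dominating and has external neighbourhood much larger than twice its size; and any two disjoint vertex sets of size at least $\rho n$ span $\Omega(n^2p)$ edges of $G$.

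The crux is to show that Maker, playing the $(1,b)$ game on $E(G)$ with $b=(1-\varepsilon)\frac{np}{\ln n}$, can build in $O(n)$ moves a spanning connected subgraph $M_0\subseteq G$ satisfying $|N_{M_0}(S)\setminus S|\ge 2|S|$ for every $S$ with $|S|\le\rho n$. Here Maker would use the danger-function (potential) strategy of Gebauer and Szab\'o and of Krivelevich, transferred to the board $E(G)$: he keeps for every vertex $v$ a \emph{danger} value measuring how close Breaker is to preventing $v$ from reaching its required local degree and its prescribed expansion role, and on each move claims a free $G$-edge incident to a currently most dangerous vertex, picking the other endpoint so as to keep his connectivity/expansion bookkeeping balanced. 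The analysis is a box-game type estimate showing that for $b\le(1-\varepsilon)\frac{np}{\ln n}$ Breaker can never push any single vertex's danger past the critical level $\approx\deg_G(v)\approx np$ --- the factor $\ln n$ being exactly the slack one has when $n$ vertices must be protected simultaneously, which is why the threshold sits at $\frac{np}{\ln n}$. The typical properties of $G$ above are precisely what guarantee that the free $G$-edges Maker's strategy calls for always exist. This step carries essentially all the difficulty, and I expect it to be the main obstacle; it is the exact complement of Theorem~\ref{BreakerIsolatingAvertex}.

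Given $M_0$, Maker then plays at most $n$ further moves. At each of them his current graph $M_t\supseteq M_0$ is still a connected graph in which every set of at most $\rho n$ vertices expands by a factor of $2$, so by P\'osa's rotation--extension lemma $M_t$ has $\Omega(n^2)$ \emph{boosters} --- non-edges whose addition lengthens a longest path or closes a Hamilton cycle. Since Maker's graph always has $O(n)$ edges, there are only $2^{O(n\ln n)}$ possibilities for it, so a union bound over these together with the edge-distribution property of $G$ gives that whp $\Omega(n^2p)$ of $M_t$'s boosters lie in $E(G)$; as Maker has made only $O(n)$ moves in all, Breaker has claimed only $O(n)\cdot b=o(n^2p)$ edges, so a free booster edge of $G$ remains, and Maker claims it. After at most $n$ such moves Maker's graph contains a Hamilton cycle, so he has won $\mathcal H(G)$.

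For $\mathcal C_k(G)$ with a fixed $k$, Maker runs the first phase with the extra requirements $\delta(M_0)\ge k$ and $|N_{M_0}(S)\setminus S|\ge\max\{2|S|,k\}$ for every small $S$, and then makes a number of additional ``$k$-connectivity booster'' moves bounded in terms of $k$ --- equivalently, runs the booster phase $\lceil k/2\rceil$ times to assemble suitably spread edge-disjoint Hamilton cycles --- after which his graph is $k$-vertex-connected. Since $k$ is fixed Maker still makes $O(n)$ moves, Breaker's total is still $o(n^2p)$, and the counting above goes through unchanged.
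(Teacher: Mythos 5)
Your high-level plan --- build an expander with a danger-function strategy, then finish with boosters --- matches the paper's, and your reduction of $\mathcal M$ and $\mathcal C_k$ to a Hamiltonicity-style strategy is the same, but the two load-bearing steps are left at a level of vagueness where the real difficulties hide, and one of your claimed estimates is simply false.

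First, ``picking the other endpoint so as to keep his connectivity/expansion bookkeeping balanced'' is not a strategy; the Gebauer--Szab\'o/Krivelevich danger argument only controls \emph{degrees}, not expansion, and this is exactly the gap the paper must bridge. The paper's genuinely new idea is to let Maker choose the other endpoint \emph{uniformly at random} among free $G$-edges at the dangerous vertex, use $(P2)$/$(P3)$ of Theorem~\ref{PropertiesGnpDense} to bound how often a random choice can land inside a fixed small set $A\cup B$, and conclude via a union bound over $(A,B)$ that with positive probability Maker's graph is a $\left(\frac{10000n}{\ln\ln n},2\right)$-expander; a deterministic strategy then exists because the game has perfect information. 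Even that only yields expansion for sets of size $O(n/\ln\ln n)$, so the paper needs a \emph{separate second phase} (the trick of fake moves plus Beck's Breaker criterion, Theorem~\ref{bwin}) to push expansion up to sets of size $n/5$ --- a step your proposal omits entirely, though $n/5$-expansion is what the P\'osa booster count requires.

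Second, your union bound in the booster phase is wrong as stated. After building minimum degree $\Theta(\ln n/\ln\ln n)$, Maker's graph has $\Theta(n\ln n/\ln\ln n)$ edges, not $O(n)$, and the number of graphs on $n$ vertices with $m=\Theta(n\ln n/\ln\ln n)$ edges is $\binom{\binom{n}{2}}{m}=\exp\left(\Theta\left(\frac{n\ln^2 n}{\ln\ln n}\right)\right)$, which can vastly exceed $e^{n^2p/8}$ for $p$ just above $\ln n/n$. Your bound $2^{O(n\ln n)}$ is too small, and once corrected the naive union bound fails. The paper's Lemma~\ref{l:enoughboosters} avoids this by union-bounding only over expanders that actually appear as subgraphs of $G$, which multiplies the count by $p^m$; the resulting sum needs a careful case split between $np=\omega(\ln^2 n)$ and $np=O(\ln^2 n)$ to close. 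Neither of these is a cosmetic fix --- both are essential for the theorem to hold down to $p=\omega(\ln n/n)$.
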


In the case $p=\Theta\left(\frac{\ln n}{n}\right)$ we establish two
non-trivial bounds for the critical bias $b^*$. This also settles
Conjecture~\ref{conjecture1} for this case but does not determine
the exact value of $b^*$ (notice that in this case, $b^*$ is a
constant!).

\begin{theorem} \label{mainSparse}
Let $p=\frac{c\ln n}{n}$, where $c>1600$ and let $\varepsilon>0$.
Then $G\sim \gnp$ is typically such that the threshold bias for the
games $\mathcal M(G)$, $\mathcal H(G)$ and $\mathcal C_k(G)$ lies
between $c/10$ and $c+\varepsilon$.
\end{theorem}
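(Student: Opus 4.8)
The claim has two directions: Breaker wins all three games once $b\ge c+\eps$, and Maker wins all three once $b\le c/10$; together with bias monotonicity this places the critical bias of each game in $[c/10,\,c+\eps]$.

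The Breaker direction is immediate from Theorem~\ref{BreakerIsolatingAvertex}. Since $p=\frac{c\ln n}{n}$ we have $\frac{np}{\ln n}=c$, so any $b\ge c+\eps$ satisfies $b\ge(1+\eps')\frac{np}{\ln n}$ with $\eps'=\eps/c>0$; hence $G\sim\gnp$ is typically such that Breaker, as first or second player, can isolate a vertex in Maker's graph. A graph with an isolated vertex has no perfect matching, no Hamilton cycle and is disconnected (hence not $k$-connected), so Breaker wins each of $\mathcal M(G)$, $\mathcal H(G)$, $\mathcal C_k(G)$.

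For the Maker direction it suffices, by bias monotonicity, to treat $b=c/10$. The plan is: (a) record that the board is typically a good expander; (b) have Maker claim a spanning subgraph $M\subseteq G$ that is itself a sufficiently strong expander with $\delta(M)\ge k$; and (c) invoke the standard P\'osa rotation--extension criterion, by which such an $M$ is Hamiltonian, while its expansion together with $\delta(M)\ge k$ also makes it $k$-connected, and a Hamilton cycle of $M$ moreover contains a perfect matching, so that Maker wins all three games the moment $M$ is completed. For (a): because $c$ is a large constant, routine first- and second-moment estimates show that typically $G$ is connected with $\delta(G)=\Theta(c\ln n)$, that $|N_G(S)|\ge\min\{\tfrac c4|S|\ln n,\tfrac n3\}$ for every $S$ with $|S|\le n/\ln^{0.9}n$, and that any two disjoint sets of size $\ge n/\ln^{0.9}n$ are joined by a $G$-edge; condition on all of this. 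For (b): Maker runs a potential-function (``danger'') strategy in the style of Gebauer--Szab\'o, Krivelevich and of the dense-case Theorem~\ref{mainDense} — a box-game-type defence preventing Breaker from exhausting the board-edges incident to any vertex (which secures $\delta(M)\ge k$), run in parallel with a danger function over the relevant vertex sets $S$ that measures how close Breaker is to cutting $S$ off inside $M$, Maker each turn claiming the $G$-edge that reduces the total danger the most. This succeeds with $b=c/10$ for a quantitative reason: Breaker obtains only a $\frac{c/10}{c/10+1}$-fraction of the $\Theta(nc\ln n)$ board-edges, leaving Maker $\Theta(n\ln n)$ edges, while the board's small-set expansion factor $\Theta(c\ln n)$ exceeds $b$ by a large constant factor — equivalently, in each auxiliary box game the quantity $b\ln n$ lies a constant factor below $\delta(G)$ and below the relevant board-expansion — so a danger-decreasing move is always available.

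The crux, and the step I expect to be the main obstacle, is part (b): designing and analysing this strategy on a board that is itself only barely above the connectivity threshold. Maker must run the minimum-degree defence and the expansion defence simultaneously on a single sparse edge set on which the edge sets of adjacent vertices overlap; the danger function and the greedy rule must be arranged so that they cooperate; and one must verify that the constant $b=c/10$ genuinely dominates all the constants accumulating in the expansion requirement — which is precisely what forces $c$ to be taken large (the bound $c>1600$ is not optimized). By contrast, deducing that Maker wins $\mathcal M(G)$, $\mathcal H(G)$, $\mathcal C_k(G)$ from ``$M$ has the stated expansion and minimum degree'', as well as the entire Breaker direction, are routine.
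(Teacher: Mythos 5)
Your Breaker direction is correct and identical to the paper's. The Maker direction, which you yourself flag as ``the crux\ldots the main obstacle,'' is where the proposal has a genuine gap, in two respects. First, the strategy in your part~(b) is never actually defined or analysed: you describe a ``danger function over the relevant vertex sets $S$'' run ``in parallel'' with a minimum-degree defence, but give no potential, no greedy rule, and no invariant, so the claim that this works at $b=c/10$ remains an assertion. The paper's route is concrete and different: it reruns the three-stage proof of Theorem~\ref{mainDense} --- Stage~I runs the minimum-degree game (Theorem~\ref{minimumdegree}) with randomized tie-breaking to get a $\bigl(\Theta(n/\ln\ln n),2\bigr)$-expander with positive probability (hence deterministically, by perfect information), Stage~II upgrades it to an $(n/5,2)$-expander via Beck's criterion (Theorem~\ref{bwin}) and the trick of fake moves, and Stage~III adds boosters. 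The only adaptation to the sparse regime lies inside the minimum-degree analysis: since degrees are not concentrated around $np$ when $f(n)=\Theta(1)$, one settles for $\delta(G)\ge np/2$ via $(P1)$, and the Lemma~\ref{HarmonicDeg} estimate is replaced by the cruder $\sum_j 2b/(j+1)\le 2b\ln n\le np/5$ from Corollary~\ref{coro:danger-change}$(ii)$, which still keeps the final danger positive when $c$ is large. Spelling this out is the real content of the Maker direction, and your proposal does not supply it.

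Second, your step~(c) --- ``invoke the standard P\'osa rotation--extension criterion, by which such an $M$ is Hamiltonian'' --- misstates what rotation--extension delivers. P\'osa rotation does \emph{not} make an $(R,2)$-expander Hamiltonian; it only shows that a non-Hamiltonian connected $(R,2)$-expander has $\Omega(R^2)$ boosters (Lemma~\ref{l:expander_booster}), which is precisely why the paper needs a booster-harvesting Stage~III. To conclude Hamiltonicity directly from expansion one would need the much stronger criterion of Lemma~\ref{HamCrit} ($d$-expansion of small sets with $d\ge 12$ plus edges between all pairs of moderately large sets), and it is not at all clear that your sketched strategy at bias $c/10$ produces such a graph. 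The paper deliberately avoids this by building only a $2$-expander and then collecting boosters --- a design choice your sketch omits, and one that would need to be restored (or replaced by a proof of the stronger expansion) before the argument could be considered complete.
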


\textbf{Remark:} In the terms of Theorem~\ref{mainSparse}, if $1 < c
\leq 1600$, we get by Theorem~\ref{BreakerIsolatingAvertex} that
$b^* \leq c+\eps$, and by the main result of \cite{BFHK} that $b^* >
1$, so indeed $b^* = \Theta(\frac{np}{\ln n})$ in this case as well.

We also consider the analogous Avoider-Enforcer games played on the
edge set of a random board $G \sim \gnp$. Here Avoider aims to avoid
claiming all the edges of a graph which contains a perfect matching,
a Hamilton cycle, or that is $k$-connected, (according to the game),
and Enforcer tries to force him claiming all the edges of such a
subgraph. We prove the following analog of
Conjecture~\ref{conjecture1}:

\begin{theorem}\label{main2}
Let $\frac{70000\ln n}{n} \leq p \leq 1$. A random graph $G \sim
\gnp$ is typically such that the asymptotic threshold bias for the
(1,b) Avoider-Enforcer games $\mathcal M(G)$, $\mathcal H(G)$ and
$\mathcal C_k(G)$ (for a fixed positive integer $k$) is
$b^*=\Theta(\frac{np}{\ln n})$.
\end{theorem}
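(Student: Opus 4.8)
The plan is to prove Theorem~\ref{main2} by establishing two matching bounds on the critical bias, exactly as in the Maker-Breaker setting. For the upper bound (Avoider wins when $b \geq C \frac{np}{\ln n}$ for suitable $C$), I would show that with such a large bias Avoider can simply survive: he has a strategy to claim only a few edges overall, or at least to ensure his graph has an isolated vertex (hence no perfect matching, no Hamilton cycle, and is not $k$-connected for $k\geq 1$). The cleanest route is to transfer the Maker-Breaker result of Theorem~\ref{BreakerIsolatingAvertex}: in the monotone Avoider-Enforcer game, Enforcer must claim at least $b$ edges each round and Avoider at least one; Avoider plays the Breaker strategy from Theorem~\ref{BreakerIsolatingAvertex} (isolating a vertex $v$ in his own graph) by always claiming exactly one edge. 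The wrinkle is that Enforcer may claim more than $b$ edges, and — worse — Enforcer's moves do not hurt Avoider here, since it is Avoider's graph that must stay matching-free. In fact the relevant statement is easier: by a result of the same flavor (or directly), when $b$ is at least roughly $\frac{np}{\ln n}$ times a constant, Enforcer is forced to exhaust the board quickly while Avoider has claimed $o(n)$ edges, so Avoider's graph is a forest with many isolated vertices and contains none of the target structures. I would prove this by a direct counting/box argument: partition $E(G)$ into Avoider's and Enforcer's edges; since the game lasts at most $|E(G)|/b \approx \frac{n\ln n}{2}$ rounds (Enforcer taking $\geq b$ each round) and Avoider takes one per round, Avoider ends with at most $\frac{n\ln n}{2} \ll$ anything he needs — but we must be careful, since $\frac{n \ln n}{2}$ is not $o(n)$. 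So instead Avoider should use a pairing/potential strategy on the minimum-degree structure: maintain a vertex $v$ and greedily claim, whenever possible, a non-$v$-edge, guaranteeing $\deg_{\text{Avoider}}(v)$ grows slowly; combined with the fact that in $G\sim\gnp$ with $p = \omega(\ln n / n)$ every vertex has degree $(1+o(1))np$, and $b$ large forces Enforcer to claim most $v$-edges, one shows $v$ can be kept isolated in Avoider's graph.

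**For the lower bound** (Enforcer wins, i.e. Avoider is forced to build the target, when $b \leq c\frac{np}{\ln n}$ for a small constant $c$), I would adapt the Maker-side argument behind Theorem~\ref{mainDense}: Enforcer here plays the role of ``Maker'' in the sense that he wants Avoider to end up with a rich graph, but the subtlety is that it is \emph{Avoider's} graph that must contain the structure, so Enforcer must \emph{force} edges into Avoider's graph, which he does by claiming edges himself so as to shrink Avoider's available choices. The standard technique (cf.\ the monotone Avoider-Enforcer literature, Hefetz–Krivelevich–Stojaković–Szabó) is a degree/box game: Enforcer claims edges to reduce the number of free edges at low-degree vertices of Avoider's graph, forcing Avoider to eventually claim edges there. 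Concretely, I would run an Enforcer strategy that maintains, for every vertex $v$, that the number of edges at $v$ still available to Avoider plus the number Avoider already has is controlled, driving up $\delta(\text{Avoider's graph})$; once Avoider's graph has large minimum degree and good expansion — which one gets by playing on the pseudorandom host $G\sim\gnp$ and using that $b$ is small enough that Avoider is forced to claim a constant fraction of edges at each vertex — standard Pósa-rotation / expansion arguments give a Hamilton cycle (hence also a perfect matching and $k$-connectivity). The resilience-type properties of $G\sim\gnp$ for $p = \Omega(\ln n/n)$ (every subgraph with min-degree $\geq \varepsilon np$ and the right expansion is Hamiltonian) are the engine here.

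**The main obstacle** is the lower-bound direction: unlike Maker, who actively builds his own structure, Enforcer can only influence Avoider's graph indirectly, so I must design an Enforcer strategy that provably \emph{forces} Avoider's graph to acquire large minimum degree and expansion against an adversarial Avoider who is doing everything to stay sparse and locally thin. The key quantitative point to nail down is: with $b = c\,np/\ln n$ and $c$ small, after Enforcer's claims there are so few free edges left at any vertex $v$ that Avoider is compelled to claim many edges at $v$ — this requires that throughout the game Enforcer can keep claiming $v$-edges faster than Avoider can avoid them, which in turn hinges on the degree $(1\pm o(1))np$ concentration in $G\sim\gnp$ and a careful accounting of how many rounds the game lasts. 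I would set up a ``danger function'' bookkeeping analogous to the $\danger$/$\avdanan$ notation the paper introduces, monitoring for each vertex the quantity (free $v$-edges) $- C\cdot$(Avoider's $v$-edges), and show Enforcer can keep it from blowing up; once this is in place the structural (Hamiltonicity/matching/connectivity) conclusions follow from the same expansion lemmas used for Theorem~\ref{mainDense}, and the constant $70000$ in the hypothesis is exactly what makes the box-game inequalities go through with room to spare.
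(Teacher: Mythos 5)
Your high-level plan (prove two bounds; Avoider wins for $b\geq C\,np/\ln n$, Enforcer wins for $b\leq c\,np/\ln n$) matches the paper's structure, but both of your concrete strategies have genuine gaps and differ substantially from what the paper actually does.

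\textbf{Avoider direction.} Your final proposal --- fix one vertex $v$ and greedily claim non-$v$-edges --- does not work. Enforcer is never \emph{forced} to claim any particular edges: he can spend the whole game claiming edges away from $v$, and once the only free edges left are the $\approx np$ edges at $v$, the alternation forces Avoider to claim roughly $d(v)/(b+1)=\Theta(\ln n)$ of them, so $v$ is not isolated. Your earlier counting attempt (``Avoider claims only $O(n\ln n)$ edges'') is a red herring that you yourself flag; $\Theta(n\ln n)$ edges is far more than enough to span a Hamilton cycle. The paper's Theorem~\ref{AvoiderWin} instead exploits the monotone rule in a way you do not consider: Avoider first finds a set $U$ of size $\sqrt{n/\ln n}$ spanning very few internal edges, and in his \emph{first move claims every edge of $G$ not incident to $U$} (this is legal because Avoider's bias is ``at least one''). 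This reduces the game to the bipartite stars between triplets of $U$ and $V\setminus U$, which are pairwise disjoint and of size $O(np)$ each, so the reverse box game lemma (Lemma~\ref{EnforcerWinBox}) applies and forces Enforcer to fully claim one triplet's star, while the few remaining internal $U$-edges cannot touch all three triplet vertices. This one-move ``claim everything else'' trick is the crux, and it is absent from your proposal.

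\textbf{Enforcer direction.} Your ``danger-function/minimum-degree'' bookkeeping, monitoring $(\text{free } v\text{-edges})-C\cdot(\text{Avoider's } v\text{-edges})$ per vertex, is the wrong model in Avoider--Enforcer games. Unlike Breaker, Enforcer has no mechanism to push Avoider toward a particular vertex: claiming a $v$-edge simply \emph{removes} it, which works against Enforcer's goal (he wants Avoider to have large degree at $v$), and ``boxing'' by vertex stars fails because stars overlap. The paper sidesteps this entirely via role reversal: it wants Avoider to have at least one edge in each set $E_G(S,F)$ (and $E_G(A,B)$), which is equivalent to showing that \emph{Enforcer}, viewed as an Avoider with bias $b$ against an Enforcer with bias $1$, can avoid fully claiming any such set. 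This is exactly what the Erd\H{o}s--Selfridge-type Avoider criterion (Lemma~\ref{AvoidersCrit}) certifies, given the edge-count lower bounds from (P4), (P5), (P7) and Lemma~\ref{AtLeast80}. The winning sets are then fed into the Hamiltonicity/connectivity criterion of Hefetz--Krivelevich--Szab\'o (Lemma~\ref{HamCrit}), whose two properties $\textbf{P1}$, $\textbf{P2}$ are precisely what the forced edges guarantee. You gesture at ``resilience-type'' expansion arguments, but without the role reversal and the hypergraph construction $\cF(S)$ over partitions of $N(S)$, there is no way to make Enforcer's passive removal of edges translate into a concrete lower bound on Avoider's expansion.
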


As in the Maker-Breaker case, we divide our result into two separate
theorems, one which establishes Avoider's win, and one which
establishes Enforcer's win:

\begin{theorem}\label{AvoiderWin}
Let $0 \leq p \leq 1$ and $b \geq \frac{25np}{\ln n}$. Then $G\sim
\gnp$ is typically such that in the $(1,b)$ Avoider-Enforcer game
played on $E(G)$, Avoider has a strategy to isolate a vertex in his
graph, as a first or a second player.
\end{theorem}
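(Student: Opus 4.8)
The goal is a strategy for Avoider under which some vertex is never incident to an edge claimed by Avoider; equivalently, writing $D_t\subseteq V(G)$ for the set of vertices touched by an Avoider edge after $t$ rounds, we want $D_t\neq V(G)$ to persist until the end of the game (then, since every edge of $G$ is eventually claimed, such a vertex is isolated in Avoider's graph). Note that $D_0=\varnothing$, that $D_t$ is non-decreasing, and that if Avoider claims exactly one edge per move — which the rules permit — then each of his moves enlarges $D$ by at most $2$. Crucially, since Enforcer claims at least $b\geq 25np/\ln n$ edges per round, while $G\sim\gnp$ \whp\ has $(1+o(1))\binom n2 p$ edges, the whole game lasts at most $R\le e(G)/(b+1)=(1/50+o(1))\,n\ln n$ rounds, so Avoider makes at most $R$ moves. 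None of this is sensitive to who moves first, so I will not distinguish the two cases.

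Avoider will use a \emph{sacrifice-set} strategy. In a short opening phase he deliberately grows $D$ to a size $s_0=\gamma n$ for a small absolute constant $\gamma>0$, each move extending $D$ along an unclaimed edge leaving it (and claiming an arbitrary edge if $D$ has no unclaimed outgoing edge); since at most $O(\gamma n\cdot b)=o(e(G))$ edges are claimed during this phase, $G[D]$ still has $(1-o(1))\binom{\gamma n}2 p$ unclaimed internal edges afterwards. From then on, on each move Avoider claims an unclaimed edge both of whose endpoints lie in $D$ (a \emph{stalling} move, leaving $D$ unchanged) whenever one exists; only when $G[D]$ has no unclaimed internal edge is he \emph{forced} to expand, and then he adds one new vertex $w\notin D$ chosen to maximise the number of still-unclaimed edges between $w$ and $D$. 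The point is that each forced expansion replenishes the stock of unclaimed internal edges, so Avoider can keep stalling.

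The heart of the argument is to bound the number of forced expansions. Between consecutive forced expansions the number of unclaimed edges inside $D$ drops by at most $b+1$ per round (Avoider one, Enforcer at most $b$), so an expansion contributing $\mu$ fresh unclaimed internal edges buys at least $\mu/(b+1)$ more stalling rounds. Here one uses that $G\sim\gnp$ is \whp\ such that (i) $e_G(S,V\setminus S)=(1+o(1))|S|(n-|S|)p$ for every $S$ with $\gamma n\le |S|\le n-\omega(1)$, and (ii) at most a bounded number of vertices outside such an $S$ can have \emph{all} of their edges into $S$ already claimed, simply because the number of claimed edges never exceeds $e(G)$; hence whenever $|D|=s$ Avoider can pick $w$ with at least $c\cdot sp$ unclaimed edges into $D$, for an absolute constant $c>0$. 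After a careful accounting — cleanest to run as a potential function combining $|D|$ with the number of unclaimed edges inside $D$ — one feeds $\sum_i \mu_i/(b+1)\le R$ (sum over forced expansions), the lower bounds $\mu_i\ge c\,s_ip$ with $s_i\ge \gamma n+i$, and $R\le e(G)/(b+1)$ together, and concludes (with $\gamma$ chosen appropriately against the bias constant $25$) that the number of forced expansions stays strictly below $n-s_0$, so $|D|$ never reaches $n$.

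Two points need care, and I expect the second to be the real obstacle. The minor one is the very end of the game: if the supply of edges away from the few still-untouched vertices runs out, a naive Avoider would be forced to touch one of them — but the monotone rule rescues him, since once fewer than $b$ edges remain unclaimed Enforcer is \emph{obliged} to claim all of them, in particular every remaining edge at an untouched vertex, leaving that vertex isolated in Avoider's graph; thus Avoider only needs $D\neq V(G)$ to hold until that moment. The genuine obstacle is the quantitative balance in the previous paragraph: Enforcer drains $D$-internal edges at rate essentially $b$, whereas a forced expansion at $|D|\approx s$ replenishes only about $sp$ of them, and these two rates are of the same order — this is exactly why the threshold bias is $\Theta(np/\ln n)$, and why one needs a concrete constant such as $25$ (rather than $1+\varepsilon$) to make the accounting close, while simultaneously ruling out that Enforcer, by pre-claiming edges incident to the few remaining outside vertices, deprives Avoider of a good expansion vertex.
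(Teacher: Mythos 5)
Your sacrifice-set strategy is a genuinely different line from the paper's, and you rightly flag the sticking point, but claim (ii) is simply false: with $|D|=s=\gamma n$, blocking a single outside vertex $w$ (claiming all of its $\approx sp$ edges into $D$) costs roughly $\gamma np$ claims, so the total budget $e(G)\approx n^2p/2$ permits Enforcer to block about $n/(2\gamma)$ outside vertices, which already exceeds $n$ whenever $\gamma\leq 1/2$. The budget constraint alone therefore says nothing; the argument would have to show that Enforcer runs out of \emph{time} --- that during the stalling phase (which lasts at most on the order of $\binom{s}{2}p$ rounds, since Avoider alone drains one inside edge per round) Enforcer cannot accumulate enough claims to pre-block all of the boundary $E(D,V\setminus D)$. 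But $b\cdot\binom{s}{2}p$ and $s(n-s)p$ are of comparable order (their ratio is $\Theta(\gamma f(n))$, which is $\Omega(1)$ for constant $\gamma$ already at $p=\Theta(\ln n/n)$), so this is a delicate rate-versus-rate calculation, not a free step, and the write-up does not carry it out. As it stands, the proof does not close.

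The paper avoids all of this by exploiting the monotone rule far more aggressively than your stalling moves do. Avoider fixes a set $U$ of size only $\sqrt{n/\ln n}$ with $e(U)\leq np/(2\ln n)$, and \emph{in his very first move claims every edge of $G$ not incident to $U$} --- legal since his move need only contain \emph{at least} one edge. The live board is now just the edges at $U$. He partitions $U$ into triples, treats each triple's edges to $V\setminus U$ as a box (of size at most $12np$ by (P1)), and plays as box-Enforcer; the original Enforcer becomes box-Avoider with effective bias at least $b-e(U)\geq 24.5np/\ln n$ inside the boxes. Lemma~\ref{EnforcerWinBox} then forces Enforcer to complete a box, meaning all outgoing edges of some triple go to Enforcer. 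Avoider postpones inside-$U$ edges until they are the only free edges left; at that point one forced move touches at most two of the three triple vertices, and Enforcer must then claim all remaining edges, leaving one vertex isolated in Avoider's graph. The idea you are missing is that the opening move can be arbitrarily large, collapsing the board onto a sub-board so small that a box-game criterion settles the matter with no quantitative accounting at all.
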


\begin{theorem} \label{EnforcerWin}
Let $\frac{70000\ln n}{n} \leq p \leq 1$ and $b \leq
\frac{np}{20000\ln n}$. Then  $G\sim \gnp$ is typically such that
Enforcer has a winning strategy in the $(1,b)$ games $\mathcal
M(G)$, $\mathcal H(G)$ and $\mathcal C_k(G)$ (for every positive
integer $k$), as a first or a second player.
\end{theorem}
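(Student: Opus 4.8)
The plan is to have Enforcer force Avoider's final graph $A$ to be a good expander of logarithmic minimum degree, and then to read off all three target properties from this one structural fact together with the typical pseudorandomness of $G$. Concretely, fix $k$ and a large absolute constant $C_0$, put $\delta_1:=C_0\ln n$, and let $m_0=m_0(n,p)$ be an auxiliary parameter with $\ln n\lesssim m_0\lesssim n/\ln n$ (concretely $m_0=\lceil C_0\ln n/p\rceil$ works). I would first prove a purely deterministic lemma: if $G$ has the typical properties recorded below and $A\subseteq G$ is a spanning subgraph with (a) $\delta(A)\ge\delta_1$ and (b) every two disjoint $S,T\subseteq V(G)$ with $|S|=|T|=m_0$ span at least $k$ edges of $A$, then $A$ is Hamiltonian, contains a perfect matching, and is $k$-connected. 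Indeed, (a) together with the local sparsity of $G$ (few edges inside sets of size $\le m_0$, no vertex with more than $O(\ln n)$ neighbours in such a set) forces $|N_A(U)\setminus U|\ge 2|U|$ for all $U$ with $|U|\le m_0$, and this, together with (b), is exactly the hypothesis of the standard expansion/rotation--extension criterion for Hamiltonicity used in \cite{K}, \cite{BFHK}; the same two properties give $k$-connectivity (the ``$\ge k$'' in (b) and the slack in (a) absorb the deletion of $k-1$ vertices), and a Hamilton cycle contains a (near-)perfect matching. So it suffices to show that Enforcer can force (a) and (b).

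\emph{Typical board and Enforcer's strategy.} Since $p\ge 70000\ln n/n$, a routine Chernoff-plus-union-bound argument shows that whp $G\sim\gnp$ satisfies: every vertex has degree $(1\pm o(1))np$; every set of size $\le m_0$ spans $(1+o(1))$ times its size many edges and no vertex has more than $O(\ln n)$ neighbours inside it; every two disjoint $m_0$-sets have $\ge\tfrac12 m_0^2p$ edges between them; and every cut $(S,V\setminus S)$ with $1\le|S|\le n/2$ has at least $\tfrac12|S|np$ crossing edges. Condition on all of this. Enforcer now plays a ``danger''-driven strategy: call a vertex $v$ \emph{active} if $A$ currently has fewer than $\delta_1$ edges at $v$, \emph{endangered} if moreover the number of still-unclaimed edges at $v$ has fallen to the threshold $T:=2\delta_1$, and call a pair of disjoint $m_0$-sets (or a cut) \emph{threatened} once it carries fewer than $k$ edges of $A$ and only a bounded number of unclaimed crossing edges. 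In each move Enforcer claims $b$ \emph{safe} edges, meaning edges incident to no endangered vertex and crossing no threatened pair (if he is the first player he begins with one arbitrary move and then follows this rule). Assuming he can always do this, (a) and (b) follow: once $v$ becomes endangered Enforcer never touches it again, so $\deg_A(v)$ plus the number of unclaimed edges at $v$ is frozen at a value $\ge T\ge\delta_1$ and all of $v$'s remaining edges eventually go to Avoider; the same reasoning forces $\ge k$ edges of $A$ between every two disjoint $m_0$-sets.

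\emph{Why a safe edge always exists, and where the bias enters.} The edges incident to endangered vertices number at most $nT=O(n\ln n)$, negligible until essentially the whole board is claimed. For the threatened pairs one uses the pseudorandomness: a pair $(S,T)$ starts with $\ge\tfrac12 m_0^2p$ edges between $S$ and $T$, hence becomes threatened only after almost all of this (a ``localized'' block of edges) has been claimed; a pair/cut counting argument then shows that the threatened pairs cannot cover a $(1-\Omega(1))$-fraction of $E(G)$ until the very endgame, since doing so would require Enforcer to have already spent essentially his entire budget of claims. The logarithmic factor that makes $np/\ln n$, rather than $np$, the correct order of magnitude for $b$ is the classical Chvat\'al--Erd\H{o}s box-game phenomenon: Enforcer amortises the ``rescue'' of the $n$ vertices (and of the pairs) against one another, the governing quantity being a harmonic sum of order $\ln n$ — this is exactly what the constant $20000$ in $b\le np/(20000\ln n)$ crudely encodes.

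\emph{The main obstacle.} The hard part is precisely this last bookkeeping. One must choose the threshold $T$, the bound in the definition of ``threatened'', the constant $C_0$, and the slack used in counting safe edges so that Enforcer is \emph{provably never} forced to claim inside a protected structure before that structure is already safe; and this has to be guaranteed simultaneously for the exponentially many pairs and cuts, which cannot be handled one at a time but only by exploiting that each threatened pair has absorbed a large, concentrated block of Enforcer's claims, so that not all of $G$ can be under threat at once and a safe edge always survives. Getting all the constants arising from the Chernoff estimates, from the box-game amortisation, and from the reduction lemma to be mutually consistent with the stated bound on $b$ is where the bulk of the effort goes.
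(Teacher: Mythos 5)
Your high-level plan — force Avoider's graph to satisfy a minimum-degree plus ``long-range edge'' expansion condition, then invoke a Hamiltonicity criterion and a connectivity criterion — is the right shape and parallels the paper. But both the reduction lemma and (especially) the game-theoretic core diverge from the paper's argument, and the latter contains the essential gap.

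For the reduction, you appeal to ``the standard expansion/rotation--extension criterion used in [K], [BFHK].'' Those references use the booster machinery: a connected non-Hamiltonian $(R,2)$-expander has $\Omega(R^2)$ boosters, which Maker then claims one at a time. This is intrinsically a \emph{Maker}-style argument; Enforcer cannot iteratively ``add boosters'' to Avoider's graph. What is needed is a one-shot sufficient condition on a fixed graph, which is exactly Lemma~\ref{HamCrit} (the Hefetz--Krivelevich--Szab\'o criterion): $|N(S)|\ge d|S|$ for all $|S|\le k_1(n,d)$, together with an edge between any two disjoint sets of size $k_2(n,d)$, implies Hamiltonicity and $d$-connectivity. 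Your conditions (a)$+$(b) plus the pseudorandomness of $G$ can likely be massaged into $\textbf{P1},\textbf{P2}$, but you also have a parameter clash: for $p=\Theta(\ln n/n)$ your $m_0=\lceil C_0\ln n/p\rceil$ is $\Theta(n)$, not $\lesssim n/\ln n$ as you require, and for sets $U$ of size close to $n/\ln n$ the claim ``no vertex has more than $O(\ln n)$ neighbours inside $U$'' is simply false when $np/\ln n\to\infty$.

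The larger issue is the strategy itself. The paper does not run a move-by-move ``danger'' strategy for Enforcer at all. Instead it observes that ``Enforcer forces Avoider to hit every set in a family $\cF$'' is equivalent to ``a player with Enforcer's bias, playing in the Avoider role, avoids fully claiming any set of $\cF$,'' and then applies the Avoider-side potential criterion Lemma~\ref{AvoidersCrit} (the Avoider--Enforcer analogue of the Erd\H{o}s--Selfridge/Beck criterion). The whole proof of Theorem~\ref{ForceExpander} reduces to constructing a suitable hypergraph $\cF(S)$ from $2dk$ blocks of $N(S)$, lower-bounding $e(S,F)$ via $(P4)$ and Lemma~\ref{AtLeast80}, lower-bounding $e(A,B)$ via $(P7)$, and checking one explicit sum $\sum_F(1+1/b)^{-|F|}<(1+1/b)^{-b}$. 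This is a short, self-contained verification. Your proposal instead sketches an adaptive, potential-free ``rescue'' strategy with endangered vertices and threatened pairs, and you yourself flag that the amortisation needed to show a safe edge always exists — simultaneously across the exponentially many pairs — is where ``the bulk of the effort goes.'' That effort is not carried out, and it is not a routine gap: it is the entire theorem. In the Avoider--Enforcer setting there is no analogue of the minimum-degree game lemma that you could quote here (the paper's Theorem~\ref{minimumdegree} is a Maker--Breaker result), so this bookkeeping would have to be built from scratch. Unless and until it is, the argument does not prove Theorem~\ref{EnforcerWin}; the paper's route via Lemma~\ref{AvoidersCrit} is the way to make the plan rigorous.
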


\subsection{Notation and terminology}
Our graph-theoretic notation is standard and follows that of
\cite{West}. In particular, we use the following:

For a graph $G$, let $V=V(G)$ and $E=E(G)$ denote its sets of
vertices and edges, respectively. For subsets $U,W \subseteq V$, and
for a vertex $v \in V$, we denote by $E(U)$ all the edges with both
endpoints in $U$, by $E(U,W)$ all the edges with one endpoint in $U$
and one endpoint in $W$, and by $E(v,U)$ all the edges with one
endpoint being $v$ and one endpoint in $U$. We further denote
$e(U):=|E(U)|$, $e(U,W):=|E(U,W)|$ and $e(v,U):=|E(v,U)|$.

For a subset $U\subseteq V$ we denote by $N(U)$ the \emph{external}
neighborhood of $U$, that is: $N(U):=\{v \in V \setminus U: \exists
u \in U \text{ s.t. } uv \in E\}$.

Assume that some Maker-Breaker game, played on the edge set of some
graph $G$, is in progress. At any given moment during the game, we
denote the graph formed by Maker's edges by $M$, the graph formed by
Breaker's edges by $B$, and the edges of $G\setminus (M\cup B)$ by
$F$. For any vertex $v \in V$, $d_M(v)$ and $d_B(v)$ denote the
degree of $v$ in $M$ and in $B$, respectively. The edges of $G
\setminus (M\cup B)$ are called \emph{free edges}, and $d_F(v)$
denotes the number of free edges incident to $v$, for any $v \in V$.

Whenever we say that $G \sim \gnp$ \emph{typically} has some
property, we mean that $G$ has that property with probability
tending to $1$ as $n$ tends to infinity.

We use the following notation throughout this
paper:$$f(n):=\frac{np}{\ln n}.$$

For the sake of simplicity and clarity of presentation, and in order
to shorten some of our proofs, no real effort has been made here to
optimize the constants appearing in our results. We also omit floor
and ceiling signs whenever these are not crucial. Most of our
results are asymptotic in nature and whenever necessary we assume
that $n$ is sufficiently large.

%
%
%
\section{Auxiliary results}

In this section we present some auxiliary results that will be used
throughout the paper.

\subsection{Binomial distribution bounds}
We use extensively the following well known bound on the lower and
the upper tails of the Binomial distribution due to Chernoff (see,
e.g., \cite{AS}):

\begin{lemma}\label{Che}
If $X \sim \Bin(n,p)$, then
\begin{itemize}
    \item $\Pr\left(X<(1-a)np\right)<\exp\left(-\frac{a^2np}{2}\right)$ for every $a>0.$
    \item $\Pr\left(X>(1+a)np\right)<\exp\left(-\frac{a^2np}{3}\right)$ for every $0 < a < 1.$
\end{itemize}
\end{lemma}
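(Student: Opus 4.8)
The plan is to use the standard Chernoff bounding technique via exponential moments. Write $X=\sum_{i=1}^n X_i$, where the $X_i$ are independent Bernoulli random variables with $\Pr(X_i=1)=p$, so that $\Exp e^{tX}=\prod_{i=1}^n\Exp e^{tX_i}=(1-p+pe^t)^n$ for every real $t$. Using $1+x\le e^x$ with $x=p(e^t-1)$ gives the clean bound $\Exp e^{tX}\le\exp\bigl(np(e^t-1)\bigr)$, which is all we will need about the moment generating function.

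For the upper tail, fix $t>0$ and apply Markov's inequality to the nonnegative random variable $e^{tX}$:
\[
\Pr\bigl(X>(1+a)np\bigr)=\Pr\bigl(e^{tX}>e^{t(1+a)np}\bigr)\le e^{-t(1+a)np}\,\Exp e^{tX}\le\exp\Bigl(np\bigl(e^t-1-t(1+a)\bigr)\Bigr).
\]
The exponent $e^t-1-t(1+a)$ is minimized over $t>0$ at $t=\ln(1+a)>0$, which yields
\[
\Pr\bigl(X>(1+a)np\bigr)\le\left(\frac{e^a}{(1+a)^{1+a}}\right)^{np}=\exp\Bigl(-np\bigl((1+a)\ln(1+a)-a\bigr)\Bigr).
\]
It then remains to verify the elementary one-variable inequality $(1+a)\ln(1+a)-a\ge a^2/3$ for $0<a<1$, which immediately gives the claimed bound $\exp(-a^2np/3)$.

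For the lower tail, apply the same idea to $e^{-sX}$ with $s>0$:
\[
\Pr\bigl(X<(1-a)np\bigr)=\Pr\bigl(e^{-sX}>e^{-s(1-a)np}\bigr)\le e^{s(1-a)np}\,\Exp e^{-sX}\le\exp\Bigl(np\bigl(e^{-s}-1+s(1-a)\bigr)\Bigr).
\]
For $0<a<1$ the exponent is minimized over $s>0$ at $s=-\ln(1-a)>0$, giving
\[
\Pr\bigl(X<(1-a)np\bigr)\le\left(\frac{e^{-a}}{(1-a)^{1-a}}\right)^{np}=\exp\Bigl(-np\bigl((1-a)\ln(1-a)+a\bigr)\Bigr),
\]
and one finishes by the elementary inequality $(1-a)\ln(1-a)+a\ge a^2/2$ on $0<a<1$ (for $a\ge 1$ the left-hand probability is $0$ since $X\ge 0$, so the bound is trivial).

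The only genuine work lies in the two calculus inequalities $(1+a)\ln(1+a)-a\ge a^2/3$ and $(1-a)\ln(1-a)+a\ge a^2/2$ on $(0,1)$; both follow routinely, e.g. by comparing Taylor expansions around $a=0$ or by checking that the difference of the two sides is nonnegative by inspecting its derivative on $(0,1)$. Alternatively, since this is the classical Chernoff estimate, one may simply quote it from a standard reference such as \cite{AS} without repeating the computation.
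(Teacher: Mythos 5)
The paper does not prove Lemma~\ref{Che}; it cites it as a well-known result from \cite{AS}. Your argument is the standard Chernoff derivation one would find there: exponential moment generating function bounded via $1+x\le e^x$, Markov's inequality, optimization of the free parameter at $t=\ln(1+a)$ (resp.\ $s=-\ln(1-a)$), and the two calculus inequalities $(1+a)\ln(1+a)-a\ge a^2/3$ and $(1-a)\ln(1-a)+a\ge a^2/2$ on $(0,1)$. All the steps check out, including the observation that for $a\ge 1$ the lower-tail bound is vacuous since $X\ge 0$. The only thing left implicit, which you correctly flag, is the verification of the two polynomial-vs-entropy inequalities; both follow by showing the difference has nonnegative derivative on $(0,1)$ and vanishes at $0$. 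This is a correct and complete proof of a statement the paper simply quotes.
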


The following is a trivial yet useful bound:
\begin{lemma}\label{l:Che}
Let $X \sim \Bin(n,p)$ and $k \in \mathbb{N}$. Then $$\Pr(X \geq k)
\leq \left(\frac{enp}{k}\right)^k.$$
\end{lemma}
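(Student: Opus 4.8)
The plan is to bound the upper tail by a union bound over $k$-element subsets of the index set, followed by a standard estimate of the binomial coefficient. First I would write $X = \sum_{i=1}^{n} \xi_i$, where $\xi_1, \dots, \xi_n$ are i.i.d.\ Bernoulli$(p)$ indicators. The key observation is that the event $\{X \geq k\}$ is contained in the union, over all $k$-element sets $S \subseteq \{1, \dots, n\}$, of the events $\{\xi_i = 1 \text{ for all } i \in S\}$. By the union bound and independence,
$$\Pr(X \geq k) \leq \sum_{S : |S| = k} \Pr\!\left(\bigwedge_{i \in S} \xi_i = 1\right) = \binom{n}{k} p^k .$$
If $k > n$ this is vacuous ($\binom{n}{k} = 0$) and the claimed inequality holds trivially, since its right-hand side is nonnegative; so we may assume $1 \le k \le n$.

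Next I would replace $\binom{n}{k}$ by a more convenient expression. Using $\binom{n}{k} = \frac{n(n-1)\cdots(n-k+1)}{k!} \leq \frac{n^k}{k!}$ together with the elementary bound $k! \geq \left(\frac{k}{e}\right)^k$ --- which is immediate from $e^k = \sum_{j \geq 0} \frac{k^j}{j!} \geq \frac{k^k}{k!}$ --- we obtain $\binom{n}{k} \leq \left(\frac{en}{k}\right)^k$. Substituting this into the previous display yields
$$\Pr(X \geq k) \leq \left(\frac{en}{k}\right)^k p^k = \left(\frac{enp}{k}\right)^k ,$$
which is exactly the assertion.

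I do not anticipate any genuine obstacle here: the argument consists of two one-line steps. The only points that warrant a moment's care are the degenerate range $k > n$, dealt with above, and the fact that for large $k$ or $p$ the right-hand side can exceed $1$, in which case the bound is trivially true but uninformative --- this is precisely why the lemma is stated as a (convenient, one-sided) inequality rather than something sharper.
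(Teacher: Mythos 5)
Your proof is correct and follows exactly the same route as the paper's one-line argument, namely $\Pr(X \geq k) \leq \binom{n}{k}p^k \leq \left(\frac{enp}{k}\right)^k$; you simply spell out the union-bound justification for the first inequality and the $k! \geq (k/e)^k$ estimate behind the second, both of which the paper leaves implicit.
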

\begin{proof}
$\Pr(X \geq k) \leq \binom{n}{k}p^k \leq
\left(\frac{enp}{k}\right)^k$.
\end{proof}

\subsection{Basic positional games results}
\subsubsection{Maker-Breaker games}
The following fundamental theorem, due to Beck~\cite{BeckBook}, is a
useful sufficient condition for Breaker's win in the $(a,b)$ game
$(X, {\mathcal F})$. It will be used in the proof of Theorem
\ref{mainDense}.

\begin{theorem}[\textbf{\cite{BeckBook}, Theorem 20.1}]\label{bwin}
Let $X$ be a finite set and let ${\mathcal F} \subseteq 2^X$.
Breaker, as a first or a second player, has a winning strategy in
the $(a,b)$ game $(X, {\mathcal F})$, provided that:
$$\sum_{F \in {\mathcal F}}(1+b)^{-|F| / a} < \frac{1}{1+b}.$$
\end{theorem}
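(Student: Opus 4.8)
The plan is to prove this by a potential-function (Erd\H{o}s--Selfridge type) argument, adapted to the biased setting. To every position of the game (with Maker having claimed some elements, Breaker some, the rest free) I attach a weight to each target set. Call $F \in {\mathcal F}$ \emph{alive} if Breaker holds none of its elements, and write $u(F)$ for the number of elements of $F$ still unclaimed by either player. Set $w(F) = (1+b)^{-u(F)/a}$ if $F$ is alive and $w(F) = 0$ otherwise, and let $\Phi = \sum_{F \in {\mathcal F}} w(F)$ be the potential of the position. Two observations drive everything: at the start $\Phi = \sum_{F}(1+b)^{-|F|/a} < \frac{1}{1+b} < 1$ by hypothesis; and if at any moment Maker has claimed all of some $F$, then $F$ is alive with $u(F) = 0$, so $w(F) = 1$ and thus $\Phi \ge 1$. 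Hence it suffices to exhibit a strategy for Breaker that keeps $\Phi < 1$ for the entire game; the ``first or second player'' clause is then handled by the usual strategy-stealing remark already used in the excerpt for Maker, so I may assume Breaker moves second in each round.

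Breaker's strategy is greedy with respect to ``danger'': at his turn, for each free element $x$ put $\mathsf{dang}(x) = \sum_{F \text{ alive},\ x \in F} w(F)$, and let Breaker claim $b$ free elements of largest danger (or all remaining free elements, which ends the game in his favour, if fewer than $b$ remain). I would analyse one round, with Maker's turn first. When Maker claims a single element, each alive set through it has $u$ drop by one, i.e. its weight is multiplied by $(1+b)^{1/a}$; since the total extra weight contributed is $((1+b)^{1/a}-1)\,\mathsf{dang}(x) \le ((1+b)^{1/a}-1)\,\Phi$, a full Maker turn multiplies $\Phi$ by at most $(1+b)$. Thus, maintaining the invariant ``$\Phi < \frac{1}{1+b}$ right after each Breaker move'', we get $\Phi < 1$ immediately after Maker's turn, so Maker has not won and every alive set has $u(F) \ge 1$. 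Breaker then claims his $b$ danger-maximal free elements; claiming a free element $y$ kills every alive set through it and so decreases $\Phi$ by $\mathsf{dang}(y)$. The remaining work is to show that this decrease suffices to pull $\Phi$ back below $\frac{1}{1+b}$, which re-establishes the invariant and closes the induction.

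The crux is exactly that last estimate: quantifying Breaker's greedy decrease and verifying it undoes the up-to-factor-$(1+b)$ blow-up caused by a Maker turn. The delicate points are that Breaker's $b$ chosen elements may lie in overlapping sets, so the total decrease is not simply $\sum_j \mathsf{dang}(y_j)$ and one must instead track the greedy re-selection after each of Breaker's $b$ sub-moves and bound the telescoped decrease from below; and that the dangers feeding Maker's gain must be compared against those available to Breaker at the correct instant of the round. Carrying the bias $b$ simultaneously in the base of the weight function and in the cardinality of Breaker's response — rather than the classical $a=b=1$ situation — is what makes this bookkeeping nontrivial. Once the one-round inequality is established, the theorem follows at once: $\Phi$ never reaches $1$, so Maker never completes a target set.
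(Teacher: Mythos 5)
The paper states this result as a citation to Beck's book and gives no proof of its own, so I assess your argument against the standard one. Your setup --- the alive-set weight $w(F) = (1+b)^{-u(F)/a}$, the potential $\Phi$, and Breaker's greedy danger-maximizing response --- is precisely the biased Erd\H{o}s--Selfridge argument, which is indeed how Beck proves the criterion. The conditional skeleton is also right: $\Phi$ starts below $\tfrac{1}{1+b}$, a Maker turn multiplies $\Phi$ by at most $1+b$, and a completed target set forces $\Phi \ge 1$.

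However, you explicitly defer the one step that is the entire mathematical content of the theorem: the per-round estimate that Breaker's greedy decrease compensates for Maker's gain. You correctly name the overlap/telescoping difficulty, but you do not resolve it, and the coarse ``multiply by $1+b$, then restore'' framing does not by itself yield the needed inequality --- the increase and decrease must be compared head-to-head. Concretely, with $\lambda = (1+b)^{1/a}$, convexity gives $\lambda^c - 1 \le \tfrac{b}{a}c$ for $0 \le c \le a$, so Maker's increase over one turn is at most $\tfrac{b}{a}\sum_{i=1}^a \mathsf{dang}(x_i)$ with dangers evaluated at the start of Maker's turn, while Breaker's decrease equals $\sum_{j=1}^b \mathsf{dang}_j(y_j)$ with $\mathsf{dang}_j$ recomputed after $y_1,\dots,y_{j-1}$ are claimed. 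The comparison then closes because dangers are non-increasing as Breaker kills sets and each $y_j$ is a greedy maximizer, so every $\mathsf{dang}_j(y_j)$ dominates every one of Maker's relevant danger values --- and this bookkeeping is cleanest if you group each round as Breaker-then-Maker and treat Maker's opening move as a one-off blow-up by at most $1+b$, rather than Maker-then-Breaker as you have it. Without this estimate actually carried out, the induction does not close and the proof as written is incomplete.
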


While Theorem~\ref{bwin} simply shows that Breaker can win certain
games, the following lemma shows that Maker can win certain games
quickly (see \cite{BeckBook}):

\begin{lemma} [Trick of fake moves] \label{fakemoves}
Let $X$ be a finite set and let ${\mathcal F} \subseteq 2^X$. Let
$b' < b$ be positive integers. If Maker has a winning strategy for
the $(1,b)$ game $(X, {\mathcal F})$, then he has a strategy to win
the $(1,b')$ game $(X, {\mathcal F})$ within
$\lceil{\frac{|X|}{b+1}}\rceil$ moves.
\end{lemma}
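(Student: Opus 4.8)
The plan is to let Maker win the $(1,b')$ game by \emph{pretending} he is playing the $(1,b)$ game, for which he is assumed to have a winning strategy $\mathcal{S}$ — this is the classical trick of fake moves. Maker keeps in his head an auxiliary $(1,b)$ game on the same board $X$, maintaining throughout the two invariants that (i) the set of elements Maker has claimed in the real game equals the set he has claimed in the imagined game, and (ii) every element claimed by Breaker in the real game has been claimed by the imagined Breaker, so that the set of claimed elements in the imagined game contains the set of claimed elements in the real game. The coupling is: after the real Breaker claims his $b'$ elements, Maker, acting for the imagined Breaker, adds to the imagined game those $b'$ elements together with $b-b'$ additional \emph{fake} elements chosen arbitrarily among the elements still unclaimed in the imagined game; this is a legal imagined-Breaker move of size $b$. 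Maker then lets $\mathcal{S}$ dictate an element in the imagined game and claims that same element in the real game.

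First I would check that this is well defined and legal. By invariant (ii) the imagined board is ``more claimed'' than the real board, so the element $\mathcal{S}$ tells Maker to take — unclaimed in the imagined game — is in particular unclaimed in the real game, hence a legal real move; and since $\mathcal{S}$ is Maker's strategy it never points to an element already owned by the imagined Breaker, so invariant (i) is restored. One should also note that some of the $b'$ elements the real Breaker just claimed may coincide with elements Maker had faked in an earlier round; in that case Maker simply does not re-claim them in the imagined game and tops up the number of fake elements so that the imagined Breaker still acquires at most (and, while the board is far from exhausted, exactly) $b$ new elements. Granting this, Maker is playing a legitimate instance of the $(1,b)$ game against the imagined Breaker while following $\mathcal{S}$, so he eventually claims all the elements of some target set $F\in\mathcal{F}$ in the imagined game; by invariant (i) he has claimed $F$ in the real game as well, so he wins the $(1,b')$ game.

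It then remains to bound the number of Maker's moves, which is exactly the number of rounds of the imagined game. Since Maker always makes the imagined Breaker's move as large as possible, each round of the imagined game removes $b+1$ elements from the finite board $X$ until it is exhausted, so the imagined game lasts at most $\lceil |X|/(b+1)\rceil$ rounds; as $\mathcal{S}$ is winning, Maker completes a target set within those rounds, i.e.\ within $\lceil |X|/(b+1)\rceil$ of his moves, as claimed.

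The only genuine subtlety — and the step I would be most careful about — is exactly this bookkeeping: keeping the imagined game a \emph{legal} play of the $(1,b)$ game (no element claimed twice, in particular handling the overlap between a real Breaker move and previously faked elements) while preserving the coupling with the real game, and dealing with the endgame in which fewer than $b$ elements remain unclaimed in the imagined game. Both points are routine once the invariants are pinned down precisely, and the move-count bound is then just a pigeonhole count on $|X|$.
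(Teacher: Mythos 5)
Your proposal is correct and fleshes out precisely the approach the paper sketches (in one sentence, citing Beck's book for details): Maker simulates a $(1,b)$ game in his head by padding the real Breaker's $b'$ elements with $b-b'$ fake ones, and the $\lceil |X|/(b+1)\rceil$ bound follows because each round of the imagined game consumes $b+1$ board elements. Your careful treatment of the overlap between real Breaker moves and previously faked elements, and of the endgame when fewer than $b$ unclaimed elements remain, is exactly the ``routine bookkeeping'' the paper defers to the reference.
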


The main idea of the proof of Lemma~\ref{fakemoves} is that, in
every move of the $(1,b')$ game $(X, {\mathcal F})$, Maker (in his
mind) gives Breaker $b - b'$ additional board elements. The
straightforward details can be found in~\cite{BeckBook}.

Recall the classic \emph{box game} which was first introduced by
Chv\'atal and Erd\H{o}s in~\cite{CE}. In the Box Game \emph{$Box(m,
\ell, b)$} there are $m$ pairwise disjoint boxes $A_1, \ldots, A_m$,
each of size $\ell$. In every round, the first player, called
\emph{BoxMaker}, claims $b$ elements of $\bigcup_{i=1}^m A_i$ and
then the second player, called \emph{BoxBreaker}, destroys one box.
BoxMaker wins the game $Box(m, \ell, b)$ if and only if he is able
to claim all elements of some box before it is destroyed. We use the
following theorem which was proved in \cite{CE}:

\begin{theorem} \label{boxgame}
Let $m,\ell$ be two integers. Then, BoxMaker wins the game $Box(m,
\ell, b)$ for every $b>\frac{\ell}{\ln m}$.
\end{theorem}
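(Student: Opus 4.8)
The plan is to give BoxMaker an explicit winning strategy and analyse it with a one-line potential function. BoxMaker plays the \emph{balancing} strategy: in every round he claims his $b$ new elements one at a time, each time taking a still-free element from a surviving (not yet destroyed) box that currently has the largest number of free elements. First I would record two facts about this strategy. (i) If at the start of some round the surviving boxes contain at most $b$ free elements in total, then BoxMaker claims all free elements of one surviving box and wins on the spot, before BoxBreaker replies. (ii) A trivial induction shows that right after any BoxMaker move the surviving boxes have free-element counts differing by at most $1$, and this ``balanced'' property survives the destruction of an arbitrary box, hence is maintained throughout the game.

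Now assume the game is still running at the start of round $t$; let $k_t:=m-t+1$ be the number of surviving boxes and let $T_t$ be the total number of free elements they contain. By (i) I may assume $T_t>b$, so in round $t$ BoxMaker removes exactly $b$ free elements, after which the $k_t$ surviving boxes hold $T_t-b$ free elements and, by (ii), are balanced to within $1$; in particular each of them has at least $\lfloor(T_t-b)/k_t\rfloor$ free elements, so whichever box BoxBreaker then destroys takes at least that many free elements out of play. Hence
$$T_{t+1}\ \le\ (T_t-b)-\lfloor(T_t-b)/k_t\rfloor\ \le\ (T_t-b)(1-1/k_t)+1,\qquad k_{t+1}=k_t-1.$$
Writing $S_t:=T_t/k_t$ for the average load per surviving box (so $S_1=\ell$ and $S_m=T_m$), this rewrites as $S_{t+1}\le S_t-\tfrac{b}{m-t+1}+\tfrac{1}{m-t}$, and telescoping over $t=1,\dots,m-1$ yields
$$T_m\ \le\ \ell-b\sum_{j=2}^{m}\frac1j+\sum_{j=1}^{m-1}\frac1j\ =\ \ell-b(H_m-1)+H_{m-1},$$
where $H_r:=\sum_{i=1}^{r}1/i$.

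Finally I need $T_m\le b$, i.e.\ $\ell+H_{m-1}\le bH_m$, since then BoxMaker wins in round $m$ (if not earlier). As $H_m>\ln m$ for every $m$, the hypothesis $b>\ell/\ln m$ gives $bH_m>\ell H_m/\ln m=\ell+\ell(H_m-\ln m)/\ln m$, and because $H_m-\ln m\ge\gamma>\tfrac12$ while $H_{m-1}=O(\ln m)$, the surplus $\ell(H_m-\ln m)/\ln m$ absorbs $H_{m-1}$, so $T_m\le b$ and BoxMaker wins. I expect the only delicate point --- hence the main obstacle --- to be exactly this last integrality bookkeeping: the $+1$ lost to rounding each round, and more fundamentally the wastefulness of spreading $b$ elements over possibly far more than $b$ boxes, mean the crude harmonic estimate only pins down the leading behaviour $bH_m\sim b\ln m$ up to lower-order $O(\log m)$ terms. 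To obtain the statement cleanly over the whole parameter range one follows \cite{CE} and instead determines, by induction on $m$, the exact least box size $\ell$ for which BoxBreaker wins $Box(m,\ell,b)$, which turns out to be a ``floored'' harmonic-type sum rather than $bH_m$ on the nose.
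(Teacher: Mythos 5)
The paper does not prove Theorem~\ref{boxgame}; it is quoted directly from Chv\'atal--Erd\H{o}s \cite{CE}, so there is no in-paper argument to compare against. Judged on its own terms, your balancing-plus-harmonic-telescoping analysis is the natural heuristic, and the recursion $T_{t+1}\le (T_t-b)(1-1/k_t)+1$ together with the potential $S_t=T_t/k_t$ is set up correctly. However, the final step contains a genuine gap, which you yourself flag but do not repair. After telescoping you need $\ell+H_{m-1}\le bH_m$, and you attempt to get this from $b>\ell/\ln m$ via $bH_m>\ell+\ell(H_m-\ln m)/\ln m$. The surplus term $\ell(H_m-\ln m)/\ln m$ is roughly $\gamma\ell/\ln m$, while the debt $H_{m-1}$ is roughly $\ln m$; these balance only when $\ell\gtrsim (\ln m)^2$. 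The hypothesis $b>\ell/\ln m$ by itself puts no such lower bound on $\ell$, so the argument does not close for $\ell$ small relative to $(\ln m)^2$ --- and this is precisely the regime that matters in the paper's application, where $\ell\approx np$ can be as small as $\Theta(\ln n)$ while $\ln m\approx \ln n$.

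In fact, the problem is not merely one of bookkeeping: the statement as written in the paper is, taken literally, false for small parameters. For $\ell=2$, $b=1$ and $m\ge 8$ one has $b>\ell/\ln m$, yet BoxBreaker wins $Box(m,2,1)$ by always destroying the box BoxMaker just touched, so no accumulation is ever possible. This shows that the ``$+1$'' rounding losses you identify are not slack in the analysis but reflect a real integrality effect; the clean threshold in \cite{CE} is a floored harmonic-type quantity, not $bH_m$ on the nose, and Theorem~\ref{boxgame} as stated should be read as the asymptotic paraphrase used in the regime $\ell,b,m\to\infty$. Your closing paragraph correctly diagnoses exactly this and points to the exact inductive recursion of \cite{CE} as the fix, but the proof as submitted stops short of carrying that out, so it does not establish the stated claim.
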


\subsubsection{Avoider-Enforcer games}
Similarly to Theorem~\ref{bwin}, we have the following sufficient
condition for Avoider's win, which was proved in \cite{HKS}:

\begin{lemma} [\cite{HKS}, Theorem 1.1] \label{AvoidersCrit}
Let $X$ be a finite set and let ${\mathcal F} \subseteq 2^X$.
Avoider, as a first or a second player, has a winning strategy in
the $(a,b)$ game $(X, {\mathcal F})$, provided that:
$$ \sum_{F\in {\mathcal F}}
\left(1+\frac{1}{a}\right)^{-|F|}<\left(1+\frac{1}{a}\right)^{-a}.$$
\end{lemma}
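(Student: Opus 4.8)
The plan is to transfer the classical Erd\H{o}s--Selfridge potential argument to the Avoider side, which is essentially the route taken in \cite{HKS}. We will give Avoider an explicit strategy and track a single ``danger'' function throughout the play.

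Fix a position of the game. Call a target set $F \in \mathcal F$ \emph{alive} if Enforcer has not yet claimed any element of $F$; a set that is not alive can no longer make Avoider lose, so it is irrelevant. For an alive set $F$ let $u_F$ denote the number of its elements that are still unclaimed (equivalently, not yet claimed by Avoider). Define
\[
  \Phi \;:=\; \sum_{F \text{ alive}} \left(1+\tfrac1a\right)^{-u_F}.
\]
Three elementary facts organise the argument. First, at the start of the game $\Phi = \sum_{F \in \mathcal F}\left(1+\tfrac1a\right)^{-|F|}$, which by hypothesis is strictly smaller than $\left(1+\tfrac1a\right)^{-a}$, hence in particular smaller than $1$. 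Second, if at some moment Avoider has claimed all of some $F \in \mathcal F$, then that $F$ is alive with $u_F = 0$, so its term alone contributes $1$ and $\Phi \ge 1$. Third, an Enforcer move can only decrease $\Phi$: claiming an element kills every alive set containing it (its term drops to $0$) and changes nothing else. Consequently Enforcer's bias $b$ plays no role, and neither does the order of play (if Avoider is second, Enforcer's first move only lowers $\Phi$ below its initial value); it suffices to show that Avoider has a strategy keeping $\Phi$ strictly below $1$ for the whole game, since then by the second fact he never completes a member of $\mathcal F$.

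Avoider's strategy is of weight type: at any moment assign to each free element $x$ the weight $w(x) := \sum_{F \ni x,\ F \text{ alive}}\left(1+\tfrac1a\right)^{-u_F}$, which measures (to first order) how much claiming $x$ would raise $\Phi$, and have Avoider claim on each turn $a$ free elements of smallest weight, reassessing after each individual pick and spreading his choices so as not to concentrate them inside a single target set (if fewer than $a$ free elements remain, Avoider claims all of them and the game is over). The slack between the two sides of the hypothesis is explained as follows: as long as $\Phi < \left(1+\tfrac1a\right)^{-a}$, every alive set has more than $a$ unclaimed elements, so within one turn Avoider --- who claims only $a$ elements --- cannot complete any target set.

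The main obstacle is the quantitative heart of the argument: showing that this greedy, spread-out strategy never lets $\Phi$ climb to $1$. One has to bound the increase of $\Phi$ incurred when Avoider claims his $a$ lowest-weight free elements in a single round and verify that the initial head start $\left(1+\tfrac1a\right)^{-a}$ of $\Phi$ below $1$ is never used up; this is exactly the Avoider analogue of the Erd\H{o}s--Selfridge weight computation, the factor $\left(1+\tfrac1a\right)^{a}$ being the worst-case cost that one Avoider move (of $a$ elements) can inflict on a given set, which is why the criterion demands $\sum_{F}\left(1+\tfrac1a\right)^{-|F|} < \left(1+\tfrac1a\right)^{-a}$ rather than only $<1$. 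Granted this estimate, the three facts above show that $\Phi$ stays below $1$ throughout, so Avoider never claims a whole target set and therefore wins, as a first or a second player.
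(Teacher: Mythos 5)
Your setup is correct and, since the paper only quotes Lemma~\ref{AvoidersCrit} from~\cite{HKS} without reproducing a proof, the relevant comparison is with the argument there. The potential $\Phi$, the three preliminary observations (initial value below $\left(1+\frac1a\right)^{-a}$, losing forces $\Phi\ge 1$, Enforcer's moves can only decrease $\Phi$), and the idea of a weight-minimizing greedy strategy are all the right ingredients. But you have not actually proved the lemma: the paragraph beginning ``The main obstacle is the quantitative heart of the argument'' correctly names the estimate that $\Phi$ stays below $1$ for the whole game, and then says ``Granted this estimate \dots Avoider \dots wins.'' That estimate \emph{is} the lemma; the three facts you establish are just the routine reduction to it.

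The gap is also not one that the appeal to ``the Avoider analogue of the Erd\H{o}s--Selfridge weight computation'' will close automatically. In Erd\H{o}s--Selfridge, the player following the strategy (Breaker) moves so as to \emph{decrease} the potential by the maximum weight, and the opponent's single subsequent pick can raise it by no more than what was just removed, so the change over a full round is nonpositive and the argument closes. Here the player following the strategy is Avoider, whose own picks \emph{raise} $\Phi$, while Enforcer's picks, though never raising $\Phi$, may leave it unchanged --- Enforcer is free to claim elements lying in no alive target set. So there is no guaranteed per-round cancellation for you to invoke. The slack $\left(1+\frac1a\right)^{-a}$ protects against a single block of $a$ Avoider picks (each pick multiplies $\Phi$ by at most $1+\frac1a$), but by itself gives no reason for $\Phi$ not to creep up toward $1$ over the course of many rounds. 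Turning the greedy choice into a real quantitative statement --- showing the actual increase per pick is much smaller than the trivial factor-$\left(1+\frac1a\right)$ bound, or giving a global accounting over the entire game --- is exactly what is required and exactly what is missing.
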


In the proof of Theorem~\ref{AvoiderWin} we use the Avoider-Enforcer
version of the box game -- \emph{monotone}-$rBox(b_1, \ldots,
b_n,(p,q))$ which was analyzed in \cite{FKN}. In this game there are
$n$ disjoint boxes of sizes $1 \leq b_1 \leq \ldots \leq b_n$,
Avoider claims at least $p$ elements per move, Enforcer claims at
least $q$ elements per move, and Avoider loses if and only if he
claims all the elements in some box by the end of the game. The
following lemma can be easily derived from Theorem 1.7 and Remark
3.2 in \cite{FKN}:

\begin{lemma}\label{EnforcerWinBox}
Let $b,k$ be positive integers. For every integer $n \geq 2e^{k/b}$
and for every sequence of integers $1 \leq b_1 \leq \ldots \leq b_n
\leq k$, Enforcer wins the game
monotone-$rBox(b_1,\ldots,b_n,(b,1))$ as a first or a second player.
\end{lemma}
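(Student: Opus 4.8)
The plan is to deduce Lemma~\ref{EnforcerWinBox} from the analysis of the monotone Avoider--Enforcer box game in \cite{FKN}; this game plays for Avoider--Enforcer exactly the role the Chv\'atal--Erd\H{o}s box game (Theorem~\ref{boxgame}) plays for Maker--Breaker. The first step is a monotonicity reduction: in monotone-$rBox(b_1,\ldots,b_n,(b,1))$, shrinking any box can only make it easier for Enforcer (a smaller box is exhausted by Avoider sooner), so Enforcer's win in the uniform instance $Box(n,k,(b,1))$ implies his win for every sequence $1\le b_1\le\cdots\le b_n\le k$; concretely, Enforcer follows his strategy for the size-$k$ instance, pretending that the $k-b_i$ ``missing'' elements of box $i$ have already been claimed by Avoider --- legitimate because Avoider's constraint is only a lower bound on his bias. (Alternatively, one invokes the general form of \cite[Theorem~1.7]{FKN} directly for the sequence $b_1,\ldots,b_n$.) Hence it suffices to handle $b_1=\cdots=b_n=k$.

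For the uniform instance I would use the Enforcer strategy that is the Avoider--Enforcer mirror of the Chv\'atal--Erd\H{o}s argument: in each round, after Avoider's move, Enforcer claims one free element of an Enforcer-untouched box that currently contains the largest number of Avoider's elements (if no untouched box has a free element, Enforcer plays arbitrarily). Each such move permanently \emph{neutralizes} one box, after which it can no longer cause Avoider to lose. Suppose, for contradiction, that Avoider survives; then at the end of the game every element is claimed, so Enforcer must have neutralized all $n$ boxes, say in rounds $t_1\le\cdots\le t_n$, where at round $t_j$ the neutralized box holds $a_j<k$ of Avoider's elements and is the most threatened among the $n-j+1$ boxes still untouched. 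Bounding the number of elements Avoider has claimed by round $t_j$ (at most $(j-1)k$ in the already-neutralized boxes plus at most $(n-j+1)a_j$ in the untouched ones) against the lower bound $b\cdot t_j$ forced by Avoider's bias, and combining these inequalities with harmonic weights $\tfrac{1}{n-j+1}$, one gets $b\le k/\sum_{i=1}^{n}\tfrac1i\le k/\ln(n/2)$; since $n\ge 2e^{k/b}$ is exactly the statement $b\ge k/\ln(n/2)$, a careful (integer) version of this computation yields the contradiction. Equivalently, this computation is carried out once and for all in \cite{FKN}, and one only has to check that the hypothesis of \cite[Theorem~1.7 and Remark~3.2]{FKN} is implied by $n\ge 2e^{k/b}$ and $b_i\le k$. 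The ``first or second player'' clause is inherited from \cite{FKN} (an extra opening move never hurts the relevant player).

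The genuinely routine part --- and the only real work --- is the bookkeeping in the last step: \cite[Theorem~1.7]{FKN} and \cite[Remark~3.2]{FKN} are stated for arbitrary box sequences and arbitrary biases $(p,q)$, so one must specialize to $(p,q)=(b,1)$, feed in $b_i\le k$, and verify that the resulting numerical condition follows from $n\ge 2e^{k/b}$, with the constant $2$ exactly absorbing the gap between the clean threshold $k/\ln n$ and what the monotone box-game analysis actually delivers. I do not expect any conceptual obstacle beyond recognizing that $n\ge 2e^{k/b}\iff b\ge k/\ln(n/2)$ places $b$ just above the Chv\'atal--Erd\H{o}s-type threshold $k/\ln n$, which is precisely where the ``forcing'' player wins the box game.
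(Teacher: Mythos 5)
Your ultimate route — specialize \cite[Theorem~1.7 and Remark~3.2]{FKN} to bias $(b,1)$ and boxes of size at most $k$, then check that $n\ge 2e^{k/b}$ implies the resulting numerical condition — is exactly what the paper does: the paper gives no other proof, it simply states that the lemma ``can be easily derived from Theorem 1.7 and Remark 3.2 in \cite{FKN}.'' Your monotonicity reduction to the uniform instance $b_1=\cdots=b_n=k$ is valid but unnecessary on this route, since the cited FKN result already allows arbitrary box sizes.

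One caution on the self-contained Chv\'atal--Erd\H{o}s-style sketch you offer as an alternative. The inequalities you write down, namely $b\,t_j \le (j-1)k + (n-j+1)a_j$ together with $a_j\le k-1$ and $t_j\ge j$, do not yield the harmonic bound $b\le k/\sum_{i=1}^n \tfrac1i$: expanding $b j \le (j-1)k + (n-j+1)(k-1)$ gives $bj \le nk - (n-j+1)$, and taking $j=n$ only produces $b<k$, which is far weaker than what is needed. The actual box-game analysis (both in Chv\'atal--Erd\H{o}s and in \cite{FKN}) is a recursive/potential argument that tracks how the \emph{deficit} of the most-loaded untouched box evolves across rounds; it is not recovered by summing the static per-round counting bounds with weights $\tfrac1{n-j+1}$. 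Since you fall back on citing \cite{FKN} for this computation, the proposal's conclusion stands, but the sketch as written would not survive being asked to ``carefully'' carry out the integer computation you allude to.
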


%

%
%
%

\subsection{$(R,c)$-Expanders}

\begin{definition}
For every $c>0$ and every positive integer $R$ we say that a graph
$G=(V,E)$ is  an \emph{$(R,c)$-expander} if $|N(U)|\geq c|U|$ for
every subset of vertices $U\subseteq V$ such that $|U|\leq R$.
\end{definition}

In the proof of Theorem~\ref{mainDense} Maker builds an expander and
then he turns it into a Hamiltonian graph. In order to describe the
relevant connection between Hamiltonicity and $(R,c)$-expanders, we
need the notion of \emph{boosters}.

Given a graph $G$, we denote by $\ell(G)$ the maximum length of a
path in $G$.

\begin{definition}\label{d:booster}
For every graph $G$, we say that a non-edge $uv\notin E(G)$ is a
\emph{booster} with respect to $G$, if either $G \cup \{uv\}$ is
Hamiltonian or $\ell(G \cup \{uv\})>\ell(G)$. We denote by
${\mathcal B}_G$ the set of boosters with respect to $G$.
\end{definition}
The following is a well-known property of $(R,2)$-expanders (see
e.g. \cite{FK}).
\begin{lemma}\label{l:expander_booster}
If $G$ is a connected non-Hamiltonian $(R,2)$-expander, then
$|{\mathcal B}_G|\geq R^2/2$.
\end{lemma}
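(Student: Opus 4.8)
The natural approach is the \emph{rotation--extension} technique of P\'osa, exactly as in the standard proof that good expanders are Hamiltonian up to a few boosters. Fix a longest path $P$ in $G$; it has $\ell(G)+1$ vertices, with endpoints $a$ and $b$, say. Keeping $a$ fixed, perform all possible sequences of rotations and let $Y$ be the set of vertices that appear as the free (non-$a$) endpoint of some longest path obtained this way. P\'osa's lemma states that $|N(Y)|<2|Y|$ (the external neighbourhood of $Y$ is ``trapped'' near the rotation images of $Y$ along a longest path). Since $G$ is an $(R,2)$-expander, the inequality $|Y|\le R$ would force $|N(Y)|\ge 2|Y|$, a contradiction; hence $|Y|\ge R+1$.

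Now iterate from the other side. For each $y\in Y$ pick a longest path $P_y$ from $a$ to $y$, keep the endpoint $y$ fixed, perform all rotations of $P_y$, and let $X_y$ be the resulting set of free endpoints. By exactly the same argument (P\'osa's lemma plus the $(R,2)$-expansion of $G$) we get $|X_y|\ge R+1$ for every $y\in Y$.

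Next, show every such pair is a booster. Fix $y\in Y$ and $x\in X_y$; by construction $G$ contains a longest path $Q$ with endpoints $x$ and $y$. I claim $xy\notin E(G)$: otherwise $Q\cup\{xy\}$ is a cycle on $\ell(G)+1$ vertices, which is either a Hamilton cycle of $G$ --- impossible since $G$ is non-Hamiltonian --- or, using that $G$ is connected, has a vertex with a neighbour outside the cycle, and opening the cycle there yields a path on $\ell(G)+2$ vertices, contradicting the choice of $P$. Running the same dichotomy inside the connected graph $G\cup\{xy\}$ shows that $xy$ is a booster: the cycle $Q\cup\{xy\}$ is either a Hamilton cycle of $G\cup\{xy\}$, or it can be opened (using an edge of $G$, since $xy$ has both its endpoints on the cycle) to a path on $\ell(G)+2$ vertices, so $\ell(G\cup\{xy\})>\ell(G)$.

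Finally, count. The number of ordered pairs $(y,x)$ with $y\in Y$ and $x\in X_y$ equals $\sum_{y\in Y}|X_y|\ge |Y|(R+1)\ge (R+1)^2>R^2$, and each such pair gives, by the previous step, an unordered booster $\{x,y\}\in\mathcal{B}_G$. A fixed booster $\{x,y\}$ can arise from at most two ordered pairs, so $|\mathcal{B}_G|\ge R^2/2$. The only genuinely technical ingredient here is P\'osa's lemma --- verifying that the external neighbourhood of the set of rotation endpoints of a longest path grows by a factor of roughly $2$ --- but this is entirely classical (see, e.g., \cite{FK}); the rest is the short case analysis above together with the double count, so I do not expect a real obstacle.
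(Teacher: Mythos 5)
The paper does not prove this lemma --- it cites it as a standard fact (see the reference to \cite{FK} in the sentence preceding the statement). Your proof is the classical P\'osa rotation--extension argument and is correct: you derive $|Y|\geq R+1$ from P\'osa's lemma combined with $(R,2)$-expansion, iterate from the other endpoint to get $|X_y|\geq R+1$ for every $y\in Y$, observe that each resulting pair is a non-edge booster (using connectivity and non-Hamiltonicity of $G$), and close with the double count, dividing by $2$ to pass from ordered to unordered pairs. This is essentially the argument one finds in the cited source, so there is no discrepancy to report.
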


Our goal is to show that during a game on an appropriate graph $G$,
assuming Maker can build a subgraph of $G$ which is an
$(R,2)$-expander, he can also claim sufficiently many such boosters,
so that his $(R,2)$-expander becomes Hamiltonian. In order to do so,
we need the following lemma:

\begin{lemma} \label{l:enoughboosters}
Let $a>0$ and $p > \frac{800a\ln n}{n}$. Then $G\sim \gnp$ is
typically such that every subgraph $\Gamma \subseteq G$ which is a
non-Hamiltonian $(n/5,2)$-expander with $\frac{an\ln n}{2\ln \ln
n}\leq |E(\Gamma)|\leq \frac{100an\ln n}{\ln \ln n}$ satisfies
$|E(G)\cap {\mathcal B}_{\Gamma}|> \frac{n^2p}{100}$.
\end{lemma}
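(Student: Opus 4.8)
The plan is to get into position to apply Lemma~\ref{l:expander_booster} and then run a \emph{weighted} union bound over all possible choices of the expander $\Gamma$. First, a routine observation: any $(n/5,2)$-expander on the vertex set $V=V(G)$ is automatically connected. Indeed, if it were disconnected, its smallest component $C$ would satisfy $|C|\le n/2<3n/5$; if $|C|\le n/5$ take $U=C$, and otherwise $|C|/3<n/5\le|C|$, so one may pick $U\subseteq C$ with $|C|/3<|U|\le n/5$; either way $N(U)\subseteq C\setminus U$ yields $|N(U)|\le|C|-|U|<2|U|$, contradicting expansion. Hence every $\Gamma$ as in the statement is a connected non-Hamiltonian $(n/5,2)$-expander, so Lemma~\ref{l:expander_booster} gives $|\mathcal{B}_\Gamma|\ge(n/5)^2/2=n^2/50$; note also that $\mathcal{B}_\Gamma$ is a set of pairs, determined by $\Gamma$ and disjoint from $E(\Gamma)$.

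Now set $N=\binom{n}{2}$, $m=\frac{100an\ln n}{\ln\ln n}$ and $f=f(n)=\frac{np}{\ln n}$, so the hypothesis reads simply $f>800a$. Fix an abstract graph $\Gamma$ on $V$ that is a non-Hamiltonian $(n/5,2)$-expander with $e(\Gamma)\le m$, and call it \emph{bad} (for a given $G$) if $\Gamma\subseteq G$ yet $|E(G)\cap\mathcal{B}_\Gamma|\le\frac{n^2p}{100}$. Conditioned on $\{\Gamma\subseteq G\}$ — an event of probability $p^{e(\Gamma)}$ — each pair of $\mathcal{B}_\Gamma$ still lies in $E(G)$ independently with probability $p$, so $|E(G)\cap\mathcal{B}_\Gamma|$ stochastically dominates $\Bin(n^2/50,p)$, a variable of mean $n^2p/50=2\cdot\frac{n^2p}{100}$; by the lower-tail Chernoff estimate (Lemma~\ref{Che} with parameter $1/2$) the conditional probability that $\Gamma$ is bad is at most $\exp(-n^2p/400)$. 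Summing over candidate graphs and grouping by edge count,
$$\Pr[\text{some such }\Gamma\text{ is bad}]\ \le\ \sum_{\Gamma:\,e(\Gamma)\le m}p^{e(\Gamma)}\exp\!\Bigl(-\tfrac{n^2p}{400}\Bigr)\ \le\ \exp\!\Bigl(-\tfrac{n^2p}{400}\Bigr)\sum_{j=0}^{m}\binom{N}{j}p^{j},$$
and if this is $o(1)$ we are done: then $G$ typically has no bad $\Gamma$, which — recalling the connectivity observation and that the stated edge range lies inside $e(\Gamma)\le m$ — is exactly the claim.

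The only real obstacle is to show that this last quantity tends to $0$, and here it is crucial \emph{not} to throw away the factors $p^{e(\Gamma)}=\Pr[\Gamma\subseteq G]$: the number of graphs with at most $m$ edges is $\exp(\Theta(n\ln^2 n/\ln\ln n))$, which already beats $\exp(n^2p/400)$ when $f$ is a large constant, so the naive unweighted union bound fails. With the weights kept, consecutive terms of $\sum_{j}\binom{N}{j}p^{j}$ have ratio $\frac{(N-j)p}{j+1}$, which for every $j\le m$ is at least $2$ once $n$ is large (this is where $f>800a$ enters), so the sum is at most $2\binom{N}{m}p^{m}\le2\bigl(\tfrac{eNp}{m}\bigr)^{m}=2\bigl(\tfrac{ef\ln\ln n}{200a}\bigr)^{m}$. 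It then remains to verify $m\ln\bigl(\tfrac{ef\ln\ln n}{200a}\bigr)\le\tfrac{n^2p}{800}$, i.e. $\ln\bigl(\tfrac{ef\ln\ln n}{200a}\bigr)\le\tfrac{f\ln\ln n}{80000a}$: since $f>800a$ the right-hand side is at least $\tfrac{\ln\ln n}{100}$, and a short case split on whether $f\le(\ln\ln n)^2$ or $f>(\ln\ln n)^2$ — using in the latter case that $\ln x/x$ is decreasing — shows the left-hand side is smaller for $n$ large. Then the displayed probability is at most $2\exp(-n^2p/800)\to0$, finishing the proof. (Note that the lower bound $e(\Gamma)\ge\frac{an\ln n}{2\ln\ln n}$ plays no role here; only the upper bound on $e(\Gamma)$ matters, as it is what keeps the $p$-weighted count of candidate graphs under control.)
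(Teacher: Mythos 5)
Your proof is correct and follows essentially the same approach as the paper's: establish connectivity of any $(n/5,2)$-expander, invoke Lemma~\ref{l:expander_booster} to get $|\mathcal{B}_\Gamma|\ge n^2/50$, apply Chernoff to the boosters caught by $G$, and run the union bound over candidate subgraphs $\Gamma$ while crucially retaining the weights $p^{e(\Gamma)}=\Pr[\Gamma\subseteq G]$. Your execution is in a couple of places tidier than the printed one: you close the binomial sum $\sum_{j\le m}\binom{N}{j}p^j$ by a clean geometric-ratio argument and a single scalar inequality, where the paper instead splits into the cases $np=\omega(\ln^2 n)$ and $np=O(\ln^2 n)$; and your Chernoff constant $\exp(-n^2p/400)$ is the one that actually follows from Lemma~\ref{Che} (the paper writes $\exp(-n^2p/8)$, apparently dropping the factor $1/50$ from the mean), which makes your constants-checking step the honest one.
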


\textbf{Proof.} First, notice that any $(n/5,2)$-expander is
connected. Indeed, let $C$ be a connected component of $G$. If
$|C|\leq n/5$ then clearly $C$ has neighbors outside, a
contradiction. Otherwise, since $G$ is an $(n/5,2)$-expander, $C$
must be of size at least $3n/5>n/2$. Hence there is exactly one such
component and $G$ is connected. Now, fix a non-Hamiltonian
$(n/5,2)$-expander $\Gamma$ in the complete graph $K_n$. Then
clearly $\Pr\left(\Gamma \subseteq G\right)=p^{|E(\Gamma)|}$. By
definition, the set of boosters of $\Gamma$, ${\mathcal
B}_{\Gamma}$, is a subset of the potential edges of $G$. Therefore,
$|E(G)\cap {\mathcal B}_{\Gamma}|\sim \textrm{Bin}(|{\mathcal
B}_{\Gamma}|,p)$ and the expected number of boosters is $|{\mathcal
B}_{\Gamma}|p\geq\frac{n^2p}{50}$ by Lemma~\ref{l:expander_booster}.
Now, by Lemma~\ref{Che} we get that $\Pr(|E(G)\cap {\mathcal
B}_{\Gamma}|\leq\frac{n^2p}{100})\leq \exp(-\frac{n^2p}{8})$.
Running over all choices of $\Gamma$ with $\frac{an\ln n}{2\ln \ln
n}\leq |E(\Gamma)|\leq \frac{100an\ln n}{\ln \ln n}$ and using the
union bound we get

\begin{align*}
\Pr&\left(\exists \textrm{ }\Gamma \textrm{ such that }\Gamma
\subseteq G \textrm{ and } |E(G)\cap {\mathcal B}_{\Gamma}|\leq
\frac{n^2p}{100}\right)\nonumber \\
\leq & \sum_{m=\frac{an\ln n}{2\ln\ln n}}^{\frac{100an\ln n}{\ln\ln
n}}\binom{\binom{n}{2}}{m}p^m \exp(-\frac{n^2p}{8})\nonumber \\
\leq & \sum_{m=\frac{an\ln n}{2\ln\ln n}}^{\frac{100an\ln n}{\ln\ln
n}}\left(\frac{en^2p}{2m}\right)^m \exp(-\frac{n^2p}{8})\nonumber \\
\leq &\sum_{m=\frac{an\ln n}{2\ln\ln n}}^{\frac{100an\ln n}{\ln\ln
n}}\exp\left(m\ln\left(\frac{en^2p}{2m}\right)-\frac{n^2p}{8}\right)=\heartsuit
\nonumber
\end{align*}
To complete the proof we should show that $\heartsuit=o(1)$. For
that goal we consider each of the cases $np=\omega(\ln^2 n)$ and
$np=O(\ln^2 n)$ separately. For the former we have that
$$\heartsuit\leq \sum_{m=\frac{an\ln n}{2\ln\ln n}}^{\frac{100an\ln n}{\ln\ln n}}\exp\left(n\ln^2 n-\frac{n^2p}{8}\right)=o(1);$$
and for the latter we have
\begin{align*}
\heartsuit\leq & \sum_{m=\frac{an\ln n}{\ln\ln n}}^{\frac{100an\ln
n}{\ln\ln n}}\exp\left(\frac{100an\ln n}{\ln \ln
n}\ln\left(\frac{enp\ln\ln n}{a\ln
n}\right)-\frac{n^2p}{8}\right)\\
\leq & \sum_{m=\frac{an\ln n}{2\ln\ln n}}^{\frac{100an\ln n}{\ln\ln
n}}\exp\left(\frac{100an\ln n}{\ln \ln n}\ln\left(C\ln n\ln\ln
n\right)-\frac{n^2p}{8}\right)\\
=& \sum_{m=\frac{an\ln n}{2\ln\ln n}}^{\frac{100an\ln n}{\ln\ln
n}}\exp\left((1+o(1))100an\ln n-\frac{n^2p}{8}\right)=o(1)
\end{align*}
This completes the proof. {\hfill $\Box$\medskip}

The following lemma shows that an $(R,c)$-expander with the
appropriate parameters is also $k$-vertex-connected.

\begin{lemma}[\cite{BFHK}, Lemma 5.1]\label{kconnectivity} For every positive integer $k$, if $G = (V,E)$ is an $(R,c)$-expander
such that $c \geq k$, and $Rc \geq \frac{1}{2}(|V|+k)$, then $G$ is
$k$-vertex-connected.
\end{lemma}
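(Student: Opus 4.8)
The plan is to proceed by contradiction. Suppose $G=(V,E)$ is an $(R,c)$-expander with $c\ge k$ and $Rc\ge\frac{1}{2}(|V|+k)$, yet $G$ is not $k$-vertex-connected. Then there is a set $S\subseteq V$ with $|S|\le k-1$ whose removal disconnects $G$. Fix the vertex set $A$ of one connected component of $G-S$, and let $B:=V\setminus(S\cup A)$ be the (nonempty) union of the remaining components. Since any $G$-edge between $A$ and $B$ would survive in $G-S$ and connect the two parts, there are no such edges; consequently $N(A)\subseteq S$ and $N(B)\subseteq S$, so in particular $|N(A)|\le k-1$ and $|N(B)|\le k-1$.

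First I would rule out the small cases. If $|A|\le R$, then by the expansion property $|N(A)|\ge c|A|\ge c\ge k$, contradicting $|N(A)|\le k-1$; the same reasoning excludes $|B|\le R$. Hence from now on $|A|>R$ and $|B|>R$. (As a side remark, this already forces $|V|\ge k+1$: applying expansion to a single vertex gives $c\le|V|-1$, hence $k\le|V|-1$, so that $k$-connectivity is not vacuously defined under our hypotheses.)

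The main step is to play the two large sides of the cut against the hypothesis on $Rc$. Pick arbitrary $A'\subseteq A$ and $B'\subseteq B$ with $|A'|=|B'|=R$. Then $N(A')\subseteq N(A)\cup(A\setminus A')\subseteq S\cup(A\setminus A')$, so $|N(A')|\le|S|+|A|-R$; on the other hand the expansion property applies to $A'$ (since $|A'|=R$) and yields $|N(A')|\ge cR$. Hence $cR\le|S|+|A|-R$, and symmetrically $cR\le|S|+|B|-R$. Adding these two inequalities and using $|A|+|B|=|V|-|S|$ gives $2cR\le|S|+|V|-2R\le(k-1)+|V|-2R$, that is, $cR\le\frac{|V|+k-1}{2}-R<\frac{1}{2}(|V|+k)$, contradicting $Rc\ge\frac{1}{2}(|V|+k)$. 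This contradiction proves the lemma.

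I do not expect a genuine obstacle here: the argument is short once one realizes that the expansion should be applied simultaneously to (truncations of) \emph{both} sides $A$ and $B$ of the separation — using only the smaller component would lose a factor of two and fall just short of the stated threshold $\frac{1}{2}(|V|+k)$. The only minor points demanding care are checking that the hypotheses indeed force $|V|>k$, and choosing $A',B'$ of size exactly $R$ so that the bound $|N(\cdot)|\ge cR$ is available.
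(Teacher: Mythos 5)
Your proof is correct. Note that the paper does not itself prove this lemma --- it is quoted from \cite{BFHK} --- so there is nothing in this document to compare against verbatim; but your argument is the standard one for ``expansion implies vertex-connectivity'': assume a cutset $S$ with $|S|\le k-1$, observe that neither side $A$ nor $B$ of the separation can have size $\le R$ (else $|N(A)|\ge c\ge k$ would force $|S|\ge k$), then truncate each side to a set of size exactly $R$, bound its external neighbourhood by $|S|$ plus the rest of that side, and invoke $|N(A')|\ge cR$. All the steps check out, and adding the two inequalities for $A'$ and $B'$ together with $|A|+|B|=|V|-|S|$ and $|S|\le k-1$ gives $2Rc\le|V|+k-1-2R<|V|+k$, contradicting $2Rc\ge|V|+k$.

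One small quibble with your closing remark: it is not true that ``using only the smaller component would fall just short.'' If you take $A$ to be a \emph{smallest} component of $G-S$, then $|A|\le\frac{|V|-|S|}{2}$, and a single application of the truncation bound already gives
$cR\le|A|-R+|S|\le\frac{|V|+|S|}{2}-R\le\frac{|V|+k-1}{2}-R<\frac{|V|+k}{2}$,
which is exactly the same contradiction. So symmetrizing over $A$ and $B$ and ``choosing the smallest component'' are two equivalent ways of getting the needed factor of two; neither is strictly necessary over the other, and both are entirely standard.
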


\subsection{Properties of $G \sim \gnp$}

Throughout this paper we use the following properties of $G
\sim\gnp$:

\begin{theorem} \label{PropertiesGnpDense}
Let $p \geq \frac{\ln n}{n}$ and recall our notation
$f(n):=\frac{np}{\ln n}$. A random graph $G \sim \gnp$ is typically
such that the following properties hold:
\begin{enumerate}[(P1)]

\item 
For every $v\in V$, $d(v) \leq 4np$. For every $\alpha >0$ there
are only $o(n)$ vertices with degree at least $(1+\alpha)np$.\\
If $f(n) = \omega(1)$ then for every $0 < \alpha < 1$ and for every
$v\in V$, $$(1-\alpha)np \leq d(v) \leq (1+\alpha)np.$$

\item 
For every subset $U\subseteq V$, $e(U)\leq \max\{3|U|\ln n,
3|U|^2p\}$ .

\item 
For every subset $U\subseteq V$ of size $|U|\leq \frac{n\ln\ln
n}{\ln n}$, $e(U)\leq 100|U|f(n)\ln\ln n$.

\item 
Let $\eps > 0$. For every constant $\alpha>0$ and for every subset
$U\subseteq V$ where $1\leq |U| \leq \frac{\alpha}{p}$, $|N(U)| \geq
\beta|U|np$, for $\beta =
\frac{1-\sqrt{\frac{(2+\eps)(\alpha+1)}{f(n)}}}{\alpha+1}$.

\item 
For every $U\subseteq V$, $\frac{1}{p}\leq |U|\leq \frac{n}{\ln n}$,
$|N(U)|\geq n/4$.

\item 
Let $\eps > 0$. For every $\alpha \geq \sqrt{\frac{4}{f(n)}}+\eps$
and for every set $U \subseteq V$, the number of edges between the
set and its complement $U^c$ satisfies:
\[e(U,U^c)\geq (1-\alpha)|U|(n-|U|)p.\]

\item 
Let $\eps,\alpha$ be two positive constants which satisfy
$\alpha^2\eps f(n) > 4$, and denote $m:=\frac{\eps n\ln\ln n}{\ln
n}$. For every two disjoint subsets $A,B\subseteq V$ with
$|A|=|B|=m$, $e(A,B) \geq (1-\alpha)m^2p$.

\item 
$e(A,B)\geq (1-\alpha)|A||B|p$ for every two disjoint subsets
$A,B\subseteq V$ with $|A|=\frac{10000n}{\ln\ln n}$, $|B|=n/10$ and
for every $\alpha>0$.

\item 
For every subset $U\subseteq V$ such that $1\leq |U| \leq
\frac{n}{\ln^2 n}$, and for every $\eps > 0$, $|\{v \in V\setminus
U: d(v,U)\leq \frac{\eps np}{\ln n}\}|=(1-o(1))n$.
\end{enumerate}
\end{theorem}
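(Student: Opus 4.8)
The plan is to derive each of the nine items by a first-moment argument: identify the relevant ``bad'' event, bound its probability with the Chernoff estimates of Lemma~\ref{Che} (or the cruder tail bound of Lemma~\ref{l:Che}), and take a union bound over the vertices, subsets, or pairs of subsets involved. The only feature of $p$ that is ever used is $p\geq\frac{\ln n}{n}$, equivalently $f(n)\geq 1$, and this is exactly the slack that lets each per-object failure probability beat the corresponding union bound. It is convenient to treat the items in three groups.

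For the degree and edge-count items (P1)--(P3) and (P9) one uses $d(v)\sim\Bin(n-1,p)$, $e(U)\sim\Bin(\binom{|U|}{2},p)$, and, for fixed $U$ and $v\notin U$, $d(v,U)\sim\Bin(|U|,p)$. In (P1), Lemma~\ref{l:Che} gives $\Pr(d(v)\geq 4np)\leq(e/4)^{4np}\leq n^{-4\ln(4/e)}$, which survives the union over $v$; Lemma~\ref{Che} gives $\Pr(d(v)\geq(1+\alpha)np)\leq n^{-\alpha^{2}/3}$, so the expected number of such vertices is $o(n)$ and Markov finishes; and if $f(n)=\omega(1)$ then two-sided Chernoff makes the per-vertex deviation probability $e^{-\omega(\ln n)}$. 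For (P2)--(P3) one feeds the stated target $k$ into Lemma~\ref{l:Che}: splitting (P2) at $|U|=\ln n/p$ keeps the base $e\binom{|U|}{2}p/k$ below $e/6$ while $k\geq 3|U|\ln n$ dominates $\log\binom{n}{|U|}\leq|U|\ln(en)$, and for (P3) one first discards the trivial range $|U|\leq 200f(n)\ln\ln n$ (where $e(U)\leq\binom{|U|}{2}$ already satisfies the bound) and on the rest the diverging factor $\ln\ln n$ inside $k$ supplies the needed slack. For (P9), fix $U$; the variables $\{d(v,U)\}_{v\notin U}$ are independent with mean $|U|p\leq f(n)/\ln n$, so by Lemma~\ref{l:Che} the probability $q:=\Pr(d(v,U)>\frac{\eps np}{\ln n})$ is at most $(e/(\eps\ln n))^{\eps f(n)}=o(1)$; the number of such $v$ is stochastically dominated by $\Bin(n,q)$, so $\Pr(\geq\delta n\text{ of them})\leq(eq/\delta)^{\delta n}=e^{-\Omega(n)}$, which dominates the $e^{o(n)}$ union bound over all admissible $U$.

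For the expansion items (P4)--(P5) the key point is that for a fixed $U$ with $|U|=u$ the membership events $\{v\in N(U)\}_{v\in V\setminus U}$ are mutually independent, so $|N(U)|\sim\Bin\big(n-u,\,1-(1-p)^{u}\big)$ and one never has to union over candidate neighbourhoods (which would be ruinously wasteful when $u$ is small and $np$ large). For (P5), $u\geq 1/p$ gives $\Exp|N(U)|\geq(1-o(1))(1-e^{-1})n>n/4$, so a lower-tail Chernoff bound makes $\Pr(|N(U)|<n/4)\leq e^{-\Omega(n)}$, which beats $\binom{n}{u}\leq e^{o(n)}$ for $u\leq n/\ln n$. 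For (P4), $up\leq\alpha$ gives $1-(1-p)^{u}\geq\frac{up}{1+\alpha}$, hence $\Exp|N(U)|\geq(1-o(1))\frac{unp}{1+\alpha}$; writing $\beta=\frac{1-\delta}{\alpha+1}$, the lower-tail bound gives $\Pr(|N(U)|\leq\beta unp)\leq\exp\big(-(1-o(1))\frac{\delta^{2}}{2(1+\alpha)}unp\big)$, and this beats $\binom{n}{u}\leq(en/u)^{u}$ uniformly for $1\leq u\leq\alpha/p$ precisely when $\frac{\delta^{2}}{2(1+\alpha)}np>(1+o(1))\ln n$, i.e. $\delta=\sqrt{(2+\eps)(\alpha+1)/f(n)}$, which is the stated $\beta$. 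The cut items (P6)--(P8) are lower-tail estimates for $e(U,U^{c})\sim\Bin(|U|(n-|U|),p)$ and $e(A,B)\sim\Bin(|A||B|,p)$, of mean $\mu$, with failure probability $\leq e^{-\alpha^{2}\mu/2}$: in (P6) one has $\mu\geq|U|np/2$ (assuming $|U|\leq n/2$, the rest by symmetry), so $\alpha\geq\sqrt{4/f(n)}+\eps$ makes $\alpha^{2}\mu/2$ exceed $\log\binom{n}{|U|}\leq|U|\ln(en)$; in (P7) the hypothesis $\alpha^{2}\eps f(n)>4$ is exactly what forces $\alpha^{2}m^{2}p/2$ to dominate $\log\binom{n}{m}^{2}\leq(2+o(1))m\ln\ln n$; and in (P8), $\mu=\Theta(n^{2}p/\ln\ln n)$ against a union bound of size $e^{O(n)}$, so $np\geq\ln n$ suffices for every fixed $\alpha>0$.

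The main obstacle is (P4). Unlike all the other items it cannot be handled by the standard ``forbid every edge from $U$ to a large set'' union bound, because for small $u$ and large $np$ the number of candidate small neighbourhoods is far too large; one must use the exact binomial law of $|N(U)|$, and then the constant $\beta$ (equivalently $\delta$) has to be tuned so that the Chernoff exponent $\frac{\delta^{2}}{2(1+\alpha)}unp$ beats $u\ln(en/u)$ \emph{simultaneously} over the whole range $1\leq u\leq\alpha/p$ --- the tight case being $u=1$ together with $f(n)$ bounded, i.e. $p=\Theta(\ln n/n)$. Item (P3) requires mild extra care near the top of its range of $|U|$, but the divergence of $\ln\ln n$ makes that routine.
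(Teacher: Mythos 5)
Your handling of (P1)--(P8) follows the same route as the paper: the same distributional identities ($d(v)\sim\Bin(n-1,p)$, $e(U)\sim\Bin(\binom{|U|}{2},p)$, $e(U,U^c)\sim\Bin(|U|(n-|U|),p)$, and crucially $|N(U)|\sim\Bin\bigl(n-|U|,\,1-(1-p)^{|U|}\bigr)$ via the Bernoulli-type bound $1-(1-p)^{u}\ge\frac{up}{1+up}$), the same Chernoff and $(e\mu/k)^k$ tail estimates, the same split of (P2) at $|U|=\ln n/p$, and the same tuning of $\delta=\sqrt{(2+\eps)(\alpha+1)/f(n)}$ in (P4) so that the Chernoff exponent uniformly dominates $\log\binom{n}{u}$ over $1\le u\le\alpha/p$; your remark that (P4) is the binding item, tight near $u=1$ when $f(n)=\Theta(1)$, is exactly what the paper's choice of $\beta$ encodes. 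The one genuine divergence is (P9). The paper derives it deterministically from (P1): if some $U$ with $|U|\le n/\ln^2 n$ had $\Theta(n)$ vertices $v$ with $d(v,U)>\eps np/\ln n$, then $e(U,V\setminus U)=\Omega(n^2p/\ln n)$, forcing a vertex of $U$ to have degree $\Omega(np\ln n)$ and contradicting $d(v)\le 4np$. You instead give a direct concentration argument: for fixed $U$ the variables $\{d(v,U)\}_{v\notin U}\sim\Bin(|U|,p)$ are independent, Lemma~\ref{l:Che} gives $q:=\Pr(d(v,U)>\eps np/\ln n)\le(e/(\eps\ln n))^{\eps f(n)}=o(1)$, so the number of bad vertices is dominated by $\Bin(n,q)$ and exceeds $\delta n$ with probability at most $(eq/\delta)^{\delta n}=e^{-\Omega(n)}$, which beats the $e^{o(n)}$ union bound over $U$. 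Both are correct; the paper's is shorter and leverages a property already in hand, while yours is self-contained and does not rely on (P1).
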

\textbf{Proof.} For the proofs of $\emph{(P4),(P5)}$ below we will
use the following:

Let $U\subseteq V$. For every vertex $v\in V\setminus U$ we have
that $\Pr\left(v\in N(U)\right)=1-(1-p)^{|U|}$ independently of all
other vertices. Therefore $|N(U)|\sim Bin(n-|U|,1-(1-p)^{|U|})$.
Notice that for any $0 < p < 1$ (all the properties above trivially
hold for $p=1$) and for any positive integer $k$ we have the
following variation of Bernoulli's inequality: $(1-p)^{-k} \geq
1+kp$. Therefore, $(1-(1-p)^{|U|}) \geq (1-\frac{1}{1+|U|p}) =
\frac{|U|p}{1+|U|p}$. It follows that:

\begin{equation}\label{ExpNU}
\Exp(|N(U)|) = (n-|U|)(1-(1-p)^{|U|}) \geq
\frac{(n-|U|)|U|p}{1+|U|p}.
\end{equation}

\begin{enumerate}
%
\item [\emph{(P1)}]
For every $v\in V$, since $d(v)\sim Bin(n-1,p)$ , it follows by
Lemma~\ref{l:Che} that $$\Pr(d(v) \geq 4np) \leq
\left(\frac{enp}{4np}\right)^{4np} < e^{-1.2np} \leq e^{-1.2\ln n} =
n^{-1.2}.$$ Applying the union bound we get that $$\Pr(\exists v\in
V \textrm{ with } d(v)\geq 4np)\leq n\cdot n^{-1.2}=o(1).$$

Now let $\alpha > 0$. By Lemma~\ref{Che} we get that for every $v
\in V$: $$\Pr(d(v) > (1+\alpha)np) \leq \exp(-\alpha'np) \leq
n^{-\alpha'},$$ for some constant $\alpha'$. Denote by $S$ the set
of all vertices with such degree. $\Exp(|S|) \leq n^{1-\alpha'}$.
$|S|$ is a nonnegative random variable, so by Markov's inequality we
get that:
$$\Pr(|S| > n^{1-\frac{\alpha'}{2}}) \leq
\frac{n^{1-\alpha'}}{n^{1-\frac{\alpha'}{2}}} =
n^{-\frac{\alpha'}{2}} = o(1).$$ Therefore, w.h.p. $|S| \leq
n^{1-\frac{\alpha'}{2}} = o(n)$.

Assume now that $f(n) = \omega(1)$, and let $0 < \alpha < 1$ be a
constant. By Lemma~\ref{Che} and the union bound we get that
$$\Pr(\exists v\in V \textrm{ with } d(v)\geq (1+\alpha)np)\leq
n\exp\left(-\frac{\alpha^2}{3}np\right)$$
$$= n\exp\left(-\frac{\alpha^2}{3}f(n)\ln n\right) = n^{-\omega(1)} =
o(1).$$ The lower bound is achieved in a similar way.

\item [\emph{(P2)}]
Since $e(U) \sim Bin\left(\binom{|U|}{2},p\right)$, using Lemma
\ref{l:Che} and the union bound we get that:
\begin{align*}
\Pr&\left(\exists U\subseteq V \textrm{ with } e(U)>\max\{3|U|\ln n,
3|U|^2p\}\right)\nonumber \\
\leq &\sum_{t=1}^{\frac{\ln n}{p}}
\binom{n}{t}\left(\frac{e\binom{t}{2}p}{3t\ln n}\right)^{3t\ln n} +
\sum_{t=\frac{\ln n}{p}}^{n} \binom{n}{t}
\left(\frac{e\binom{t}{2}p}{3t^2p}\right)^{3t^2p}\nonumber \\
\leq&\sum_{t=1}^{\frac{\ln n}{p}} \left[n\left(\frac{tp}{2\ln n}
\right)^{3\ln n}\right]^t + \sum_{t=\frac{\ln n}{p}}^{n}
\left[n\left(\frac{1}{2}\right)^{3tp}\right]^t \nonumber \\
\leq &\sum_{t=1}^{n}\left(\frac{e}{8}\right)^{t\ln n}
\leq\sum_{t=1}^{n} n^{-t} = o(1).\nonumber
\end{align*}
\item [\emph{(P3)}]
Let $U\subset V$ be a subset of size at most $\frac{n\ln\ln n}{\ln
n}$. Since $e(U)\sim \textrm{Bin}(\binom{|U|}{2},p)$, by Lemma
\ref{l:Che} we get that
$$\Pr(e(U)\geq 10|U|f(n)\ln\ln n)\leq \left(\frac{e|U|^2p}
{20|U|f(n)\ln\ln n}\right)^{10|U|f(n)\ln\ln n}.$$ Applying the union
bound we get that $$\Pr\left(\exists \textrm{ } U \textrm{ such that
} |U|\leq \frac{n\ln\ln n}{\ln n} \textrm{ with } e(U)\geq
10|U|f(n)\ln\ln n\right)$$
\begin{align*}
\leq & \sum_{k=1}^{\frac{n\ln\ln n}{\ln
n}}\binom{n}{k}\left(\frac{ek^2p} {20kf(n)\ln\ln
n}\right)^{10kf(n)\ln\ln n}\nonumber\\
\leq & \sum_{k=1}^{\frac{n\ln\ln n}{\ln n}}
\left[\frac{en}{k}\left(\frac{ekp}{20f(n)\ln\ln n
}\right)^{10f(n)\ln\ln n}\right]^k\nonumber\\
= & \sum_{k=1}^{\frac{n\ln\ln n}{\ln n}}
\left[\frac{e^2np}{20f(n)\ln\ln n}\left(\frac{ekp}{20f(n)\ln\ln
n}\right)^{10f(n)\ln\ln n-1}\right]^k\nonumber\\
\leq & \sum_{k=1}^{\frac{n\ln\ln n}{\ln n}}\left[\frac{e^2\ln
n}{20\ln\ln n} \left(\frac{enp\ln\ln n}{20f(n)\ln n\ln\ln
n}\right)^{10f(n)\ln\ln n-1}\right]^k\nonumber\\
\leq & \sum_{k=1}^{\frac{n\ln\ln n}{\ln n}}\left[\frac{e^2\ln
n}{20\ln\ln n} \left(\frac{e}{20}\right)^{10f(n)\ln\ln
n-1}\right]^k\nonumber\\
= & o(1).\nonumber
\end{align*}

\item [\emph{(P4)}]
Since $n-|U| = (1-o(1))n$ in this range, by \eqref{ExpNU} we have
that:
$$\Exp(|N(U)|) \geq \frac{(n-|U|)|U|p}{1+|U|p} \geq
(1-o(1))\frac{|U|np}{\alpha+1}.$$

By Lemma~\ref{Che} we have that for any $\delta >0$:
$$\Pr\left(|N(U)| < (1-\delta)\Exp(|N(U)|)\right) \leq
e^{-\frac{\delta^2}{2}\Exp(|N(U)|)} \leq e^{-\alpha'|U|np},$$ where
$\alpha'=\frac{\delta^2}{(2+o(1))(\alpha+1)}$. Now, by taking
$\delta=\sqrt{\frac{(2+\eps)(\alpha+1)}{f(n)}}$ (for some $\eps>0$)
we get that $\alpha'f(n) > 1+\frac{\eps}{3}$, and so by applying the
union bound we get that:
$$\Pr(\exists \textrm{ such } U) \leq
\sum_{k=1}^{\alpha/p}\binom{n}{k} e^{-\alpha'knp} \leq
\sum_{k=1}^{\alpha/p} \left[ne^{-\alpha'f(n)\ln n}\right]^k =
o(1).$$

Therefore, w.h.p. for every such $U$, $|N(U)| \geq
(1-\delta)\Exp(|N(U)|) \geq \beta|U|np$, for $\beta =
\frac{1-\sqrt{\frac{(2+\eps)(\alpha+1)}{f(n)}}}{\alpha+1}$.

\item [\emph{(P5)}]
Let $\frac{1}{p} \leq |U| \leq \frac{n}{\ln n}$. By \eqref{ExpNU},
$\Exp\left(|N(U)|\right) \geq \frac{(n-|U|)|U|p}{1+|U|p}\geq n/3$.

By Lemma~\ref{Che} we have that $\Pr\left(|N(U)|\leq n/4\right)\leq
e^{-0.01n}$.

Applying the union bound we get that
$$\Pr\left(\exists \textrm{ such }
U\right) \leq \sum_{k=1/p}^{n/\ln n}\binom{n}{k} e^{-0.01n} \leq
n\binom{n}{\frac{n}{\ln n}}e^{-0.01n}$$
$$\leq n(e\ln n)^{\frac{n}{\ln n}}e^{-0.01n} = n\exp\left(\frac{n}{\ln n}\ln(e\ln
n)-0.01n\right) = o(1).$$

\item [\emph{(P6)}]
Assume first that $|U| \leq n/2$, otherwise switch the roles of $U$
and $U^c$. Since every edge between $U$ and $U^c$ is chosen
independently, $e(U,U^c) \sim Bin(|U||U^c|,p)$. By Lemma~\ref{Che}
we have that for given $\alpha>0$ and $U\subseteq V$:
$$\Pr\left(e(U,U^c) < (1-\alpha)|U|(n-|U|)p\right) \leq
\exp\left(-\frac{\alpha^2}{2}|U|(n-|U|)p\right)$$
$$\leq \exp\left(-\frac{\alpha^2}{4}|U|np\right) \leq
\exp\left(-(\frac{1}{f(n)}+\delta)|U|np\right)$$
$$= \exp\left(-|U|(\ln n + \delta np)\right),$$ for some
$\delta=\delta(\eps) >0$. By the union bound we get that:
$$\Pr(\exists \textrm{ such } U) \leq \sum_{k=1}^{n/2} \binom{n}{k}
\exp\left(-k(\ln n + \delta np)\right)\leq \sum_{k=1}^{n/2}
\left[n\exp\left(-\ln n-\delta np\right)\right]^k $$
$$= \sum_{k=1}^{n/2}\left(n^{-\delta f(n)}\right)^k=o(1).$$

\item [\emph{(P7)}]
Similarly to $\emph{(P6)}$, given $A,B \subset V$, $|A|=|B|=m$,
$e(A,B)\sim Bin(m^2,p)$. Therefore, by Lemma~\ref{Che} we have that:
$$\Pr\left(e(A,B)\leq (1-\alpha)m^2p\right)\leq \exp\left(-\frac{\alpha^2}{2}m^2p\right).$$
Applying the union bound we get that:
$$\Pr\left(\exists \textrm{ such } A,B\right)\leq
\binom{n}{m}^2\exp\left(-\frac{\alpha^2}{2}m^2p\right)\leq
\left[\left(\frac{en}{m}\right)^2\exp\left(-\frac{\alpha^2}{2}mp\right)\right]^m$$
$$=\left[\left(\frac{e\ln n}{\eps\ln\ln
n}\right)^2\exp\left(-\frac{\alpha^2}{2}\eps f(n)\ln\ln
n\right)\right]^m \leq \left[(\ln n)^2(\ln
n)^{-(2+\delta)}\right]^m=o(1),$$ for some
$\delta=\delta(\eps,\alpha)>0$.

\item [\emph{(P8)}]
Given subsets $A,B\subseteq V$ as described, since $e(A,B)\sim
\textrm{Bin}(|A||B|,p)$, by Lemma~\ref{Che} we get that
$$\Pr\left(e(A,B)\leq (1-\alpha)|A||B|p \right) \leq
\exp\left(-\frac{\alpha^2}{2}|A||B|p\right) =
\exp\left(-\frac{\alpha' n^2p}{\ln\ln n}\right),$$

for some constant $\alpha'$. Applying the union bound we get that:
$$\Pr\left(\exists \textrm{ such } A,B\right)\leq
\binom{n}{\frac{1000n}{\ln \ln n}}
\binom{n}{n/10}\exp\left(-\frac{\alpha' n^2p}{\ln\ln
n}\right)\leq4^n\exp\left(-\omega(n)\right) = o(1).$$

\item [\emph{(P9)}]
Assume towards a contradiction that there exists a subset
$U\subseteq V$ such that $1\leq |U| \leq \frac{n}{\ln^2 n}$ and that
there are $\Theta(n)$ vertices $v\in V\setminus U$ with $d(v,U)\geq
\frac{\eps np}{\ln n}$. Therefore, the average degree of the
vertices in $U$ is at least $\Theta\left(n\frac{np}{\ln n
}\frac{1}{|U|}\right) = \Omega\left(np\ln n\right)$. But by
$\emph{(P1)}$, $d(v)\leq 4np$ for every $v\in V$ --- a
contradiction. Hence, $|\{v \in V\setminus U: d(v,U)\leq \frac{\eps
np}{\ln n}\}|=o(n)$. {\hfill $\Box$\medskip}
\end{enumerate}

The following two lemmas may seem somewhat unnatural, but they will
be crucial for our purposes. The first one will be useful in the
proof of Theorem~\ref{EnforcerWin}:

\begin{lemma} \label{AtLeast80}
Let $p \geq \frac{80\ln n}{n}$. A random graph $G \sim \gnp$ is
typically such that for every set $U \subseteq V$ of size
$\frac{80}{p} \leq |U| \leq \frac{n}{\ln n}$, and for every set $W
\subseteq N(U)$ of size $|W| = \frac{1}{2}|N(U)|$, the following
holds:
$$e(U,W) \geq \frac{1}{50}|U|np.$$
\end{lemma}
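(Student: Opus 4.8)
The plan is to avoid union-bounding over the set $W$: it is chosen adversarially \emph{after} the random graph has been revealed (it is a subset of the random set $N(U)$), so a naive union bound over all pairs $(U,W)$ would cost a factor of roughly $2^{|N(U)|}\le 2^{n}$, which is far too expensive when $|U|$ and $np$ are small. Instead, since $e(U,W)=\sum_{v\in W}d(v,U)$, I will show that — uniformly over all eligible $U$ — only a tiny fraction of the vertices of $V\setminus U$ have small degree into $U$; this one fact forces $e(U,W)$ to be large for \emph{every} admissible $W$ at once.

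For the main estimate, I would fix $U$ with $\frac{80}{p}\le|U|\le\frac{n}{\ln n}$ and set $\mu:=|U|p\ge 80$. The variables $\{d(v,U)\}_{v\in V\setminus U}$ are independent, each distributed as $\Bin(|U|,p)$, so Lemma~\ref{Che} gives $\Pr\big(d(v,U)<\mu/2\big)<\exp(-\mu/8)\le e^{-10}=:q_0$. Writing $L_U:=\{v\in V\setminus U:d(v,U)<\mu/2\}$, we have $|L_U|\sim\Bin(n-|U|,q)$ for some $q\le q_0$, and Lemma~\ref{l:Che} yields
$$\Pr\!\left(|L_U|\ge \tfrac{n}{100}\right)\le (100eq_0)^{n/100}=\big(100e^{-9}\big)^{n/100}=\exp(-\Omega(n)),$$
since $100e^{-9}<1$. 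Now I would union-bound over $U$: the number of eligible sets is at most $\sum_{u\le n/\ln n}\binom{n}{u}\le n\binom{n}{n/\ln n}=\exp(o(n))$ — this is where $|U|\le n/\ln n$ is used — so whp $|L_U|<n/100$ for every eligible $U$ simultaneously.

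It then remains to deduce the conclusion on this whp event. Fix any eligible $U$ and any admissible $W\subseteq N(U)$ with $|W|=\tfrac12|N(U)|$. Then $|N(U)|\ge n-|U|-|L_U|\ge n-\tfrac{n}{\ln n}-\tfrac{n}{100}\ge \tfrac{n}{2}$ for $n$ large, so $|W|\ge n/4$; all but at most $|L_U|$ vertices of $W$ have degree at least $\mu/2$ into $U$, so at least $\tfrac{n}{4}-\tfrac{n}{100}\ge \tfrac{n}{5}$ of them do. Hence
$$e(U,W)=\sum_{v\in W}d(v,U)\ \ge\ \frac{n}{5}\cdot\frac{\mu}{2}\ =\ \frac{|U|np}{10}\ \ge\ \frac{|U|np}{50},$$
as required.

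\textbf{Main obstacle.} The only conceptual point is realizing that $W$ must \emph{not} be union-bounded over; once the problem is recast as ``few vertices of $V\setminus U$ have small degree into $U$'', everything is a routine two-line Chernoff-plus-union-bound computation. The two things to verify are that the $\exp(-\Omega(n))$ tail for $|L_U|$ comfortably beats the $\exp(o(n))$ number of choices of $U$ (it does, thanks to $|U|\le n/\ln n$), and that the hypothesis $p\ge 80\ln n/n$, i.e.\ $|U|p\ge 80$, is precisely what makes $q_0=e^{-10}$ small enough for $100e^{-9}<1$ and for the last chain of inequalities to close with room to spare.
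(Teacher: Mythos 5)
Your proof is correct, and it takes a genuinely different route from the paper's. The paper first establishes an auxiliary Claim~\ref{claim1}: a uniform upper bound $e(U,W')\le 1.5|U||W'|p$ over all disjoint pairs with $\frac{80}{p}\le|U|\le\frac{n}{\ln n}$ and $|W'|=n/100$. Then, for a given $U$ and $W\subseteq N(U)$ with $|W|=\frac{1}{2}|N(U)|$, it lower-bounds $e(U,N(U))=E_1+E_2$ via property $\emph{(P6)}$ of Theorem~\ref{PropertiesGnpDense} and upper-bounds $E_2=e(U,N(U)\setminus W)$ by tiling $N(U)\setminus W$ with at most $50$ sets of size $n/100$ and applying the claim to each; the desired bound on $E_1$ follows by subtraction. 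You instead show directly that for every eligible $U$ at most $n/100$ vertices of $V\setminus U$ have degree below $|U|p/2$ into $U$, so that any admissible $W$ (which has size at least $n/4$) contains at least $n/5$ vertices each contributing at least $|U|p/2$ edges. Both arguments rest on a single Chernoff estimate followed by a union bound over $U$ alone; yours is more direct (no auxiliary claim, no complement-and-subtract step, no invocation of $\emph{(P6)}$), and it even delivers the stronger constant $1/10$ in place of $1/50$. One small caveat: your motivating assertion that a naive union bound over $(U,W)$ pairs is ``far too expensive'' is not actually forced by these parameters --- since $|U|p\ge 80$, the Chernoff tail $\exp(-\Omega(|U|np))\le\exp(-\Omega(80n))$ would comfortably absorb the $2^n\cdot\exp(o(n))$ choices of $(W,U)$ once one knows $|N(U)|\ge n/2$ --- so the conditioning on $L_U$ is an aesthetic rather than a necessary improvement; the proof itself is fine either way.
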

\begin{proof}
First, we prove the following claim:
\begin{claim}\label{claim1}
For $p \geq \frac{80\ln n}{n}$, $G \sim \gnp$ is typically such that
for every two disjoint sets $U,W \subseteq V$ such that
$\frac{80}{p} \leq |U| \leq \frac{n}{\ln n}$ and $|W| = \frac
{n}{100}$, $e(U,W) \leq 1.5|U||W|p$.
\end{claim}

\textbf{Proof of Claim~\ref{claim1}.} Let $U,W \subseteq V$ as
described above. Since $e(U,W) \sim Bin(|U||W|,p)$, by Lemma
\ref{Che} we have that:
$$\Pr(e(U,W) > 1.5|U|\frac{n}{100}p) \leq
e^{-\frac{1}{1200}|U|np}.$$ By the union bound we get that:
$$\Pr(\exists \textrm{ such } U,W) \leq
\sum_{k=\frac{80}{p}}^{\frac{n}{\ln n}} \binom{n}{k}
\binom{n}{n/100} e^{-\frac{1}{1200}knp}$$
$$\leq\sum_{k=\frac{80}{p}}^{\frac{n}{\ln n}}
\left[\left(\frac{en}{k}\right)(100e)^{n/100k}e^{-np/1200}\right]^k
\leq \sum_{k=\frac{80}{p}}^{\frac{n}{\ln n}}
\left[\left(\frac{enp}{80}\right)(e^6)^{np/8000}e^{-np/1200}\right]^k$$
$$\leq\sum_{k=\frac{80}{p}}^{\frac{n}{\ln n}}
\left[np\exp(-\frac{np}{12000})\right]^k = o(1).$$ {\hfill $\Box$
\medskip}

Now we return to the proof of Lemma~\ref{AtLeast80}, and we assume
that $G$ satisfies the properties of
Theorem~\ref{PropertiesGnpDense} and Claim~\ref{claim1}. Let $U
\subseteq V$, $\frac{80}{p} \leq |U| \leq \frac{n}{\ln n}$, fix some
$W \subseteq N(U)$ such that $|W| = \frac{1}{2}|N(U)|$, and denote
$W'=N(U)\setminus W$. Denote by $E_1$ the number of edges between
$U$ and $W$, and by $E_2$ the number of edges between $U$ and $W'$.
Notice that for these sizes of $U$, $n-|U| = (1-o(1))n$, and that
$\sqrt{\frac{4}{80}} < 0.23$, so by $\emph{(P6)}$ of
Theorem~\ref{PropertiesGnpDense} we have that $E_1 + E_2 \geq
0.77|U|np$.

Now partition $W'$ into subsets of size $\frac{n}{100}$ (the last
one may be of smaller size). Since $|W'| \leq \frac{n}{2}$, 50 such
subsets suffice. By Claim~\ref{claim1}, there are at most
$\frac{1.5}{100}|U|np$ edges between $U$ and each of the subsets, so
$E_2 \leq 0.75|U|np$. Putting the two inequalities together, we get
that $E_1 \geq \frac{1}{50}|U|np$.
\end{proof}\\

The second lemma will be a key ingredient in the main proof of the
next subsection.

\begin{lemma} \label{HarmonicDeg}
Let $p=\omega(\frac{\ln n}{n})$. Then $G\sim G(n,p)$ is typically
such that the following holds:
\begin{description}
\item For every subset $J_N=\{v_1,\ldots ,v_{N}\}\subseteq V$ we
have that
$$\sum_{j=1}^N\frac{e(v_j,J_j)}{j}= o(np)$$ where $N=\frac{n}{\ln^3 n}$
and $J_j=\{v_1,\ldots,v_j\}$, $1\leq j\leq N$.
\end{description}
\end{lemma}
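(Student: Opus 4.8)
The plan is to reduce this weighted sum to a static estimate on the number of edges spanned by small vertex sets, via summation by parts, and then to argue according to the order of magnitude of $np$. Set $X_j:=e(v_j,J_j)$; since $G$ has no loops and $v_j\in J_j$, $X_j$ equals the number of edges from $v_j$ to $J_{j-1}=\{v_1,\dots,v_{j-1}\}$, and therefore $\sum_{i=1}^{j}X_i=e(J_j)$ for every $j$ (each edge inside $J_j$ being counted exactly once, at its later endpoint). Abel summation then gives
\[
\sum_{j=1}^{N}\frac{X_j}{j}=\frac{e(J_N)}{N}+\sum_{j=1}^{N-1}e(J_j)\Bigl(\frac1j-\frac1{j+1}\Bigr)\le\frac{e(J_N)}{N}+\sum_{m=1}^{N-1}\frac{e(J_m)}{m^2}.
\]
So it suffices to bound $\sum_{m=1}^{N}m^{-2}\max_{|U|=m}e(U)$ together with $N^{-1}\max_{|U|=N}e(U)$, and these no longer involve the ordering $v_1,\dots,v_N$.

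By property $(P2)$ of Theorem~\ref{PropertiesGnpDense}, $e(U)\le 3m^2p$ for every $U$ with $|U|=m\ge\ln n/p$, hence $\sum_{\ln n/p\le m\le N}m^{-2}e(J_m)\le 3pN=3np/\ln^3 n=o(np)$; likewise $N^{-1}e(J_N)\le\max\{3\ln n,\,3Np\}=o(np)$ by $(P2)$. For $m<6\ln n$ the trivial bound $e(J_m)\le\binom m2<m^2/2$ contributes at most $3\ln n=o(np)$. In the remaining range $6\ln n\le m<\min(N,\ln n/p)$, property $(P2)$ only supplies the crude estimate $e(J_m)\le 3m\ln n$, giving a contribution of at most $3\ln n\sum_{6\ln n\le m\le N}m^{-1}\le 3\ln n\ln N\le 3\ln^2 n$. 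If $np=\omega(\ln^2 n)$ this is already $o(np)$ and we are finished; when instead $p\le\ln^{2.5}n/n$ I replace $(P2)$ in this range by the sharper linear bound below. Since $\{np=\omega(\ln^2 n)\}$ and $\{p\le\ln^{2.5}n/n\}$ overlap, together they cover all $p=\omega(\ln n/n)$.

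The key sublemma is: \emph{if $p=\omega(\ln n/n)$ and $p\le\ln^{2.5}n/n$, then \whp\ $e(U)\le 10\,|U|$ for every $U\subseteq V$ with $|U|\le N$.} Granting it, $\sum_{m\le N}m^{-2}e(J_m)\le 10\sum_{m\le N}m^{-1}\le 20\ln n$ and $N^{-1}e(J_N)\le 10$, so the whole sum is $O(\ln n)=o(np)$ because $np=\omega(\ln n)$. To prove the sublemma, fix $U$ with $|U|=m$; then $e(U)\sim\Bin(\binom m2,p)$, so by Lemma~\ref{l:Che}, $\Pr\bigl(e(U)\ge 10m\bigr)\le\bigl(em^2p/20m\bigr)^{10m}=(emp/20)^{10m}$. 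A union bound over all $U$ with $1\le|U|=m\le N$ bounds the failure probability by
\[
\sum_{m=1}^{N}\binom nm\Bigl(\frac{emp}{20}\Bigr)^{10m}\le\sum_{m=1}^{N}\Bigl[\frac{en}{m}\Bigl(\frac{emp}{20}\Bigr)^{10}\Bigr]^m=\sum_{m=1}^{N}\Bigl[\frac{e^{11}}{20^{10}}\,(np)(mp)^9\Bigr]^m.
\]
Since $m\le N$ gives $mp\le Np=np/\ln^3 n$, we get $(np)(mp)^9\le(np)^{10}\ln^{-27}n\le\ln^{-2}n$ using $np\le\ln^{2.5}n$, so every bracket is $o(1)$ and the sum is $o(1)$. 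All the $G\sim\gnp$-properties invoked hold \whp, which completes the proof.

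The main obstacle is precisely this sparse regime. The bound $e(U)=O(|U|\ln n)$ coming from $(P2)$ is too weak to yield $o(np)$ once $np=O(\ln^2 n)$, so one really needs the linear bound $e(U)=O(|U|)$; and for the union bound establishing it to close, it is essential that $m$ only ranges up to $N=n/\ln^3 n$, since it is the factor $(mp)^9\le(np/\ln^3 n)^9$ that absorbs $\binom nm$. This is presumably why $N$ is defined with a $\ln^3 n$ in the denominator.
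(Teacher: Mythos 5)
Your proof is correct and follows essentially the same route as the paper's: both reduce the weighted sum $\sum_j e(v_j,J_j)/j$ to bounds on $e(J_m)$ over all prefixes (you via Abel summation, the paper via a dyadic partition of $J_N$ — the same idea in different clothing), then apply $(P2)$ when $np$ is large and a sharper linear bound $e(U)=O(|U|)$ for $|U|\le n/\ln^3 n$ (your sublemma is the paper's Claim~\ref{claim2}, proved the same way via Lemma~\ref{l:Che} and a union bound) when $np$ is of order at most polylog. Your closing observation about why $N=n/\ln^3 n$ is exactly the reason the union bound closes in both proofs.
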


\textbf{Proof.} Let $t=\lceil \log_2 (N+1)\rceil$. Partition
$J_N=I_0\cup I_1\cup \ldots \cup I_{t}$ in such a way that
$I_i=\{v_{2^i},\ldots v_{2^{i+1}-1}\}$ for every $0\leq i<t$, and
$I_{t}=\{v_{2^t},\ldots,v_N\}$. Notice that $|I_i|=2^i$ for every
$1\leq i<t$. We have:
$$\sum_{j=1}^N\frac{e(v_j,J_j)}{j}\leq
\sum_{i=1}^{t}\frac{e(I_i) +
e(I_i,J_{2^i})}{2^i}\leq\sum_{i=1}^{t}\frac{e(J_{2^{i+1}})}{2^i}=\spadesuit.$$
Now we distinguish between the following two cases:
\begin{enumerate} [$(i)$]
\item $pn=\omega(\ln^2 n)$. In this case we have by property
$(P2)$ of Theorem~\ref{PropertiesGnpDense}:
$$\spadesuit \leq \sum_{i=1}^{\log_2(\frac{\ln n}{p})}\frac{3|J_{2^{i+1}}|\ln
n}{2^i}+\sum_{i=\log_2(\frac{\ln n}{p})}^{t} \frac{3|J_{2^{i+1}}|^2
p}{2^i}$$
$$\leq c_1\ln n \ln(\frac{\ln n}{p}) + c_2Np\leq c_1\ln^2n+c_2Np,$$
for some positive constants $c_1$ and $c_2$. This is clearly $o(np)$
as desired.

\item $pn=O(\ln^2 n)$. In this case we need a more careful calculation. First, we
prove the following claim:

\begin{claim} \label{claim2}
If $np=O(\ln^2 n)$, then for every $c>3$, $G\sim G(n,p)$ is
typically such that $e(X)\leq c|X|$ for every subset $X\subseteq V$
of size $|X|\leq \frac{n}{\ln^3 n}$.
\end{claim}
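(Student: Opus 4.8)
The plan is a routine first-moment (union bound) argument; the only thing to watch is the bookkeeping of exponents. Note first that the statement is trivial for sets of bounded size: if $|X|=k\le 2c+1$ then $e(X)\le\binom{k}{2}=\tfrac{k(k-1)}{2}\le ck$ automatically, and $2c+1$ is a constant since $c$ is fixed. So it suffices to handle sizes $2c+1<k\le n/\ln^3 n$. For a fixed such $X$ we have $e(X)\sim\mathrm{Bin}\!\left(\binom{k}{2},p\right)$, and since we are in the regime $kp\le np/\ln^3 n=O(1/\ln n)$, Lemma~\ref{l:Che} (applied with the integer threshold $\lceil ck\rceil\ge ck$; the resulting base is $<1$, so replacing the ceiling by $ck$ and shrinking the exponent back to $ck$ are both harmless) yields
$$\Pr\big(e(X)>ck\big)\le\left(\frac{e\binom{k}{2}p}{ck}\right)^{ck}\le\left(\frac{ekp}{2c}\right)^{ck}.$$

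Next I would union bound over all $\binom{n}{k}\le(en/k)^k$ sets of each size and over all $k$ in the range:
$$\Pr(\exists\,X)\le\sum_{k=2c+2}^{n/\ln^3 n}\left[\frac{en}{k}\left(\frac{ekp}{2c}\right)^{c}\right]^{k}=\sum_{k=2c+2}^{n/\ln^3 n}\left[\frac{e^{c+1}}{(2c)^c}\,p^c\,n\,k^{c-1}\right]^{k}.$$
The whole point is that the bracketed base is $o(1)$ uniformly over the range, and this is exactly where the two hypotheses combine: using $k\le n/\ln^3 n$ and $np=O(\ln^2 n)$,
$$p^c\,n\,k^{c-1}=O\!\left(\left(\frac{\ln^2 n}{n}\right)^{c}n\left(\frac{n}{\ln^3 n}\right)^{c-1}\right)=O\!\left(\ln^{\,3-c}n\right),$$
which tends to $0$ because $c>3$. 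Hence for $n$ large the bracket is at most $\tfrac12$ for every admissible $k$, so the sum is at most $\sum_{k\ge 2c+2}(\text{bracket})^{k}\le 2(\text{bracket})^{2c+2}=o(1)$. This proves the claim.

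I do not anticipate a genuine obstacle — the estimate is elementary. The one thing that deserves attention is seeing why the restriction $|X|\le n/\ln^3 n$ is essential: it is precisely what lets the factor $\left(\tfrac{ekp}{2c}\right)^{ck}$ beat the entropy term $\binom{n}{k}\approx(en/k)^k$ once $c>3$ (without this restriction, or with $c\le 3$, the series need not converge), while the hypothesis $np=O(\ln^2 n)$ is what keeps $p^c n k^{c-1}$ only polylogarithmically large. Minor care is also needed in invoking Lemma~\ref{l:Che} with $\lceil ck\rceil$ rather than $ck$, and in peeling off the constantly many sizes $k\le 2c+1$, where one simply uses $e(X)\le\binom{|X|}{2}$.
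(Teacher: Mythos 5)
Your proof is correct and follows essentially the same route as the paper's: apply Lemma~\ref{l:Che} to each fixed $X$, bound $\binom{n}{k}\le (en/k)^k$, and observe that $np=O(\ln^2 n)$ together with $k\le n/\ln^3 n$ make the bracketed base $O((\ln n)^{3-c})=o(1)$ since $c>3$. The only differences are housekeeping: you peel off the constantly many sizes $k\le 2c+1$ (where $e(X)\le\binom{k}{2}\le ck$ trivially) and explicitly treat the integrality of the threshold in Lemma~\ref{l:Che}, while the paper sums directly from $k=1$; neither change affects the substance of the argument.
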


\textbf{Proof of Claim~\ref{claim2}.} Let $X\subset V$ be a subset
of size at most $\frac{n}{\ln^3 n}$. Since $e(X)\sim
\textrm{Bin}(\binom{|X|}{2},p)$, by Lemma~\ref{l:Che} we get that
$\Pr(e(X)\geq c|X|)\leq \left(\frac{e|X|^2p}{c|X|}\right)^{c|X|}$.
Applying the union bound we get that $$\Pr\left(\exists X \textrm{
such that } |X|\leq \frac{n}{\ln^3 n} \textrm{ and } e(X)\geq
c|X|\right)$$
$$\leq \sum_{k=1}^{\frac{n}{\ln^3 n}}\binom{n}{k}\left(\frac{ek^2p}{c
k}\right)^{c k}\leq \sum_{k=1}^{\frac{n}{\ln^3 n}}
\left[\frac{en}{k}\left(\frac{ekp}{c }\right)^{c}\right]^k$$
$$= \sum_{k=1}^{\frac{n}{\ln^3 n}}
\left[\frac{e^2np}{c}\left(\frac{ekp}{c}\right)^{c-1}\right]^k \leq
\sum_{k=1}^{\frac{n}{\ln^3
n}}\left[O(\ln^2n)O(\ln^{-1}n)^{c-1}\right]^k $$
$$= \sum_{k=1}^{\frac{n}{\ln^3 n}} \left[O(\ln n)^{3-c}\right]^k =o(1).$$
{\hfill $\Box$ \medskip}

Now, applying Claim~\ref{claim2} with $c=4$, we get that
$$\spadesuit \leq \sum_{i=1}^{t}\frac{4|J_{2^{i+1}}|}{2^i} \leq
\sum_{i=1}^{t}8=O(\ln n)=o(np).$$
\end{enumerate}

 This completes the proof of Lemma~\ref{HarmonicDeg}. {\hfill $\Box$ \medskip}

\subsection{The minimum degree game}

In the proof of Theorem~\ref{mainDense}, Maker has to build a
suitable expander which possesses some relevant properties. The
first step towards creating a good expander is to create a spanning
subgraph with a large enough minimum degree. The following theorem
was proved in \cite{GS}:

\begin{theorem} [\textbf{\cite{GS}, Theorem 1.3}] \label{mindegreeKn}
Let $\varepsilon>0$ be a constant. Maker has a strategy to build a
graph with minimum degree at least
$\frac{\varepsilon}{3(1-\varepsilon)}\ln n$ while playing against
Breaker's bias of $(1-\varepsilon)\frac{n}{\ln n}$ on $E(K_n)$.
\end{theorem}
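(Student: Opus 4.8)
Since the board is the complete graph $K_n$, this is a randomness-free positional games statement, and I would prove it by giving Maker an explicit ``defend the endangered vertices'' strategy and analysing it with a potential function, in the spirit of the box game (Theorem~\ref{boxgame}): building minimum degree $d:=\frac{\eps}{3(1-\eps)}\ln n$ is the \emph{degree game}, which behaves like the box game on $\approx n$ boxes --- one per vertex, consisting of the edges at that vertex --- each of size $\approx n$, whose threshold bias is $\approx \frac{n}{\ln n}$. So one expects Maker to win comfortably when Breaker's bias $b=(1-\eps)\frac{n}{\ln n}$ sits a definite factor below that threshold.

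Call a vertex $v$ \emph{active} while $d_M(v)<d$, and say Breaker \emph{wins at $v$} if at some moment $v$ is active but has no incident free edge, i.e.\ $d_B(v)>n-1-d$; Maker attains minimum degree $d$ precisely when Breaker never wins at any vertex. Maker's strategy: on each move claim a free edge incident to a currently most endangered active vertex --- say, the active $v$ with the largest $d_B(v)$ --- choosing its other endpoint to be endangered as well when possible. To follow the game, attach to each active vertex a danger value $\danger(v)$ increasing in $d_B(v)$ and in the residual demand $d-d_M(v)$, concretely of the shape $\danger(v)=g(d-d_M(v))\left(1+\frac{\lambda\ln n}{n}\right)^{d_B(v)}$ with $g$ increasing and $\lambda=\lambda(\eps)>1$ a constant, chosen so that the ``winning'' value $\left(1+\frac{\lambda\ln n}{n}\right)^{n-1-d}\approx n^{\lambda}$ strictly exceeds every value the total potential $\Phi:=\sum_{v\text{ active}}\danger(v)$ can attain during the game. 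The term contributed by a vertex at which Breaker has won is at least this winning value, so it suffices to show that $\Phi$ never reaches it; the game then necessarily ends with every vertex of Maker-degree at least $d$.

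The analysis rests on two estimates. When Breaker claims his $b$ edges, each raises $d_B$ at up to two vertices, multiplying their danger by $1+\frac{\lambda\ln n}{n}$; by convexity of $x\mapsto\left(1+\frac{\lambda\ln n}{n}\right)^{x}$ the increase $\Delta_B\Phi$ is largest when Breaker concentrates, and is then of order $\left(e^{\Theta(\lambda(1-\eps))}-1\right)$ times the current maximum danger. When Maker answers at a most endangered vertex, $d_M$ there rises by one, so the factor $g(d-d_M)\to g(d-d_M-1)$ lowers that vertex's danger, and a well-chosen edge lowers two terms at once. One tunes $g$ and $\lambda$, using the inequality $b<(1-\eps)\frac{n}{\ln n}$ --- Breaker's bias being a definite factor below the box-game threshold $\frac{n}{\ln n}$ --- so that these increases are absorbed; the exact balance (the gap between $b$ and $\frac{n}{\ln n}$, the $d$ Maker-edges needed per vertex, and the slack in the exponential estimates) is what pins the attainable $d$ at $\frac{\eps}{3(1-\eps)}\ln n$.

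The main obstacle is that Breaker, with bias $b$, can threaten as many as $2b$ distinct vertices in a single round while Maker repairs only one (or two), so a round-by-round decrease of $\Phi$ is too optimistic: the real content is an \emph{amortised} argument showing that, although Breaker can sprinkle a little danger over many vertices cheaply, he cannot sustain enough pressure on any single vertex $v$ to drive $d_B(v)$ up to $n-1-d$ before Maker --- always tending the most endangered vertex --- has accumulated $d$ edges at $v$. This is exactly where $b<\frac{n}{\ln n}$ (the threshold for $n$ boxes of size $\approx n$) is used, and it gives the argument its recursive, box-game flavour; carrying it out with the correct danger function is the heart of the matter and is done in~\cite{GS}. (In the complementary regime the same heuristic shows Breaker can isolate a vertex once $b$ exceeds $\frac{n}{\ln n}$, which underlies the matching threshold in results such as Theorem~\ref{BreakerIsolatingAvertex}.)
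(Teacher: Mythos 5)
Your proposal outlines a Beck-style exponential potential
$\danger(v)=g(d-d_M(v))\bigl(1+\tfrac{\lambda\ln n}{n}\bigr)^{d_B(v)}$ and argues that a
one-move analysis of $\Phi=\sum\danger(v)$ keeps $\Phi$ below the ``winning'' value
$n^{\lambda}$.  This is genuinely \emph{not} the route taken by Gebauer and Szab\'o, nor by
this paper when it proves the $\gnp$ analogue (Theorem~\ref{minimumdegree}).  The argument
there is built around the \emph{linear} danger function $\danger(v)=d_B(v)-2b\,d_M(v)$ and a
\emph{backward} averaging analysis: one assumes Breaker wins at move $g$, defines the
multi-set $I_i$ of vertices Maker played at in the last $i$ moves, and tracks the average
danger $\avdanan_{B_{g-i}}(I_i)$ as $i$ grows, showing via
Lemmas~\ref{lem:makermindeg}--\ref{lem:breakermindeg} and
Corollary~\ref{coro:danger-change} that it must have been positive already before Breaker's
first move --- a contradiction.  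The denominator $|I_i|$ in those lemmas is precisely what
replaces your exponential weight and what ultimately produces the harmonic sums
$\sum b/(j+1)\approx b\ln n$ that pin down the constant.

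Beyond being a different route, there are two concrete problems with the proposal as
sketched.  First, you do not actually carry out the balance: you say ``one tunes $g$ and
$\lambda$ \ldots the exact balance \ldots is what pins the attainable $d$ at
$\frac{\eps}{3(1-\eps)}\ln n$'' and then defer to~\cite{GS} ``with the correct danger
function.''  But the danger function in~\cite{GS} is the linear one, not your
exponential one, so this deferral does not complete your argument --- you are citing a proof
that does something else.  Second, and more substantively, the exponential potential is
exactly the tool behind the weaker Chv\'atal--Erd\H{o}s bound $b\lesssim\frac{n}{4\ln n}$;
the difficulty you correctly identify (each Breaker edge raises $d_B$ at two vertices, and
Maker repairs only one per round, so a naive round-by-round decrease of $\Phi$ is
``too optimistic'') is precisely the factor that the exponential scheme loses.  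A back-of-the-envelope
check makes this concrete: Breaker concentrating $b$ edges at the most dangerous vertex
multiplies its weight by roughly $e^{\lambda(1-\eps)}$, while Maker's single move removes
a factor at most $1-g(k-1)/g(k)\le 1$ (or at most $2$ if both endpoints are repaired), so
controlling $\Phi$ already forces $\lambda(1-\eps)=O(1)$, and the residual slack is not
enough to let $\lambda$ grow as $\eps\downarrow 0$ in the way the claimed constant
$\frac{\eps}{3(1-\eps)}\ln n$ at bias $(1-\eps)\frac{n}{\ln n}$ requires.  The
Gebauer--Szab\'o linear-danger-plus-averaging machinery was introduced precisely to absorb
this overlap loss without paying a constant factor in the bias, and that is the idea your
sketch is missing.
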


In fact, the following theorem can be derived immediately from the
proof of Theorem~\ref{mindegreeKn}:

\begin{theorem} \label{mindegreeKnimproved}
Let $\varepsilon>0$ be a constant. Maker has a strategy to build a
graph with minimum degree at least
$c=c(n)=\frac{\varepsilon}{3(1-\varepsilon)}\ln n$ while playing
against a Breaker's bias of $(1-\varepsilon)\frac{n}{\ln n}$ on
$E(K_n)$. Moreover, Maker can do so within $cn$ moves and in such a
way that for every vertex $v\in V(K_n)$, at the same moment $v$
becomes of degree $c$ in Maker's graph, there are still $\Theta(n)$
free edges incident with $v$.
\end{theorem}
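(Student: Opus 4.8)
The plan is to revisit the strategy behind Theorem~\ref{mindegreeKn} in \cite{GS} and read off from its analysis the two extra guarantees. Recall its shape: Maker assigns to every currently \emph{active} vertex $v$ (one with $d_M(v)<c$) a danger value $\danger(v)$, monotone increasing in $d_B(v)$, and he \emph{retires} $v$ — never to play at it again — the instant $d_M(v)$ reaches $c$; in each round he claims one free edge incident to a most dangerous active vertex, choosing the other endpoint so as to keep the total potential under control. The content of \cite{GS} is that for Breaker's bias $b\le(1-\varepsilon)\frac{n}{\ln n}$ this can be arranged so that no active vertex ever runs out of free edges while still needing Maker-edges; hence the process never stalls and ends with $d_M(v)\ge c$ for every $v$, which is Theorem~\ref{mindegreeKn}.

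First I would bound the number of rounds via the potential $\Psi:=\sum_{v\text{ active}}\bigl(c-d_M(v)\bigr)$: it equals $cn$ at the start and $0$ once the game ends, Breaker's moves leave it unchanged, and each Maker move — incident to an active vertex whose Maker-degree it raises — drops it by at least $1$ (a retiring vertex just leaves the sum). Hence Maker finishes within $cn$ rounds, and so Breaker claims at most $cn\cdot b=\frac{\varepsilon}{3}n^2$ edges during the whole game.

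Then I would prove the ``moreover'' part. Fix $v$ and let $\tau$ be the round at whose end $d_M(v)$ becomes $c$. As $c=\Theta(\ln n)=o(n)$, it suffices to show that $d_B(v)\le(1-\delta)n$ at that moment for some constant $\delta=\delta(\varepsilon)>0$, since then $d_F(v)=(n-1)-c-d_B(v)\ge\delta n-o(n)=\Theta(n)$. The underlying mechanism is that Maker's danger-minimizing strategy never lets Breaker pile too many edges on a still-active vertex: once Breaker has claimed unusually many edges at $v$, the vertex becomes (by monotonicity of $\danger$) dangerous enough that Maker commits to it and retires it within $O(c)$ further rounds, during which Breaker can add only $O(c)\cdot b=O(\varepsilon n)$ further edges at $v$. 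Tracing this through the danger function of \cite{GS}, and using $cb=\frac{\varepsilon}{3}n$, gives $d_B(v)\le(1-\delta)n$ at retirement for a fixed $\delta>0$. Together with Theorem~\ref{mindegreeKn} and the round bound, this proves Theorem~\ref{mindegreeKnimproved}.

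The hard part is this last step: upgrading the qualitative ``Maker retires a heavily attacked vertex quickly'' to the quantitative bound $d_B(v)\le(1-\delta)n$ with $\delta$ a fixed positive constant. Everything else — the round count, and the reduction of the free-edge claim to a bound on $d_B(v)$ — is bookkeeping. But exactly this control over $d_B(v)$ for active vertices is what the danger function of \cite{GS} provides, which is why the theorem can be derived immediately from that proof.
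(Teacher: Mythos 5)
Your bookkeeping is right: the potential $\Psi=\sum_{v\text{ active}}(c-d_M(v))$ gives the round bound $cn$ cleanly, and reducing the free-edge claim to the statement $d_B(v)\le(1-\delta)n$ at the moment $v$ reaches Maker-degree $c$ is the correct reduction. Where you go off course is the mechanism you propose for that last bound. The claim that ``once Breaker has claimed unusually many edges at $v$, Maker commits to $v$ and retires it within $O(c)$ further rounds'' is not something the Gebauer--Szab\'o danger function gives you, nor is it evidently true: Maker plays at the \emph{globally} most dangerous active vertex at each step, and Breaker can keep several vertices comparably dangerous simultaneously, so there is no commitment to $v$ and no local $O(c)$-round guarantee. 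You flag this as the hard part and punt to \cite{GS}, but the punt lands in the wrong place, because what \cite{GS} proves is not a local retirement speed.

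What the proof of Theorem~\ref{mindegreeKn} actually provides, and what the paper means by ``can be derived immediately,'' is obtained by a small change in the definition of the game rather than by a retirement argument. One declares Breaker the winner the first time there exists a dangerous vertex (one with $d_M(v)<c$) having fewer than $\delta n$ free edges for a suitable constant $\delta=\delta(\eps)>0$. With this stronger loss condition one then observes that, just before Breaker's hypothetical winning move, the offending vertex $v_g$ still has $d_M(v_g)\le c-1$ and $d_F(v_g)\le\delta n+b$, hence $d_B(v_g)\ge (1-\delta-o(1))n-2bc$, hence $\danger(v_g)$ is still $(1-O(\eps))n$. The average-danger backward induction of \cite{GS} then yields the same contradiction as before. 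Consequently, at every moment of the game every dangerous vertex has at least $\delta n$ free edges; in particular the edge that takes $v$ to degree $c$ is played while $v$ still has $\ge\delta n$ free edges, which is the ``moreover.'' This is precisely what the paper carries out in detail in the $\gnp$ setting in the proof of Theorem~\ref{minimumdegree} (the loss condition ``dangerous vertex with fewer than $\eps np/3$ free edges'' plays exactly this role). So the conclusion you reach is right and the plan is sound, but the key quantitative step comes from strengthening the game's terminating condition inside the global average-danger argument, not from a local ``quick retirement'' bound.
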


Using Theorem~\ref{mindegreeKnimproved}, the third author of this
paper proved in \cite{K} that Maker has a strategy to build a good
expander. Here, we wish to prove an analog of Theorem
\ref{mindegreeKnimproved} for $\gnp$:

\begin{theorem} \label{minimumdegree}
\label{strategys} Let $p=\omega\left(\frac{\ln n}{n}\right)$,
$\varepsilon>0$ and let $b=(1-\eps)\frac{np}{\ln n}$. Then $G\sim
\gnp$ is typically such that in the $(1,b)$ Maker-Breaker game
played on $E(G)$, Maker has a strategy to build a graph with minimum
degree $c=c(n)=\frac{\varepsilon}{6} \ln n$. Moreover, Maker can do
so within $cn$ moves and in such a way that for every vertex $v\in
V(G)$, at the same moment that $v$ becomes of degree $c$ in Maker's
graph, at least $\eps np/3$ edges incident with $v$ are free.
\end{theorem}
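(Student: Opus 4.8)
The plan is to mimic the proof of Theorem~\ref{mindegreeKnimproved} (the $K_n$ version from \cite{GS}), but run on the random board $G$ and with a slightly weaker target minimum degree $c = \frac{\varepsilon}{6}\ln n$ to absorb the loss coming from $G$ being sparse and slightly irregular. The key structural fact is that, conditioned on the typical properties of $G\sim\gnp$ listed in Theorem~\ref{PropertiesGnpDense}, the board $E(G)$ behaves enough like $E(K_n)$ for the potential-function / box-game argument of Gebauer--Szab\'o to go through. First I would fix a typical $G$: by property (P1), since $p = \omega(\ln n/n)$ we have $f(n)=\omega(1)$, so every vertex has degree $(1\pm o(1))np$ in $G$; in particular every vertex starts with $\Theta(np)$ free edges and Breaker's bias $b = (1-\varepsilon)\frac{np}{\ln n}$ is by a factor $\approx \ln n$ smaller than the minimum degree.

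The main step is the strategy itself. I would have Maker play the standard ``minimum degree'' strategy: maintain the set $D$ of \emph{dangerous} vertices -- those whose current Maker-degree is still below $c$ -- and in each move pick a dangerous vertex $v$ of lowest current free-degree $d_F(v)$ and claim a free edge at $v$ (to another dangerous vertex when possible, to match the $K_n$ argument). The analysis is a box-game / potential argument: assign to each vertex $v$ the ``danger value'' counting how close Breaker is to isolating $v$ from the dangerous set, show that Breaker, with only $b$ edges per move, cannot reduce the total free-degree available to the dangerous vertices fast enough, and conclude via Theorem~\ref{boxgame} (the box game $Box(m,\ell,b)$, won by BoxMaker only when $b>\ell/\ln m$) applied in reverse: here $\ell \approx np$ and $m$ is the number of dangerous vertices, so $b = (1-\varepsilon)\frac{np}{\ln n} < \frac{\ell}{\ln m}$ forces that Breaker (in the role of BoxMaker) cannot win, i.e. Maker always finds a dangerous vertex with a free edge available, and raises every degree to $c$. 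The bound $cn$ on the number of moves is immediate since each move raises the total Maker-degree by (at most) $2$ and we need every one of the $n$ vertices to reach degree $c$. The ``moreover'' clause -- that at the moment $v$ reaches Maker-degree $c$ it still has $\geq \varepsilon np/3$ free edges -- follows by bounding the total number of edges Breaker can touch at $v$: Breaker plays at most $cn$ moves (he stops when Maker stops) with bias $b$, so he claims at most $cn b = \frac{\varepsilon}{6}\ln n \cdot n \cdot (1-\varepsilon)\frac{np}{\ln n} \le \frac{\varepsilon}{6} n^2 p$ edges in total; combined with the initial free-degree $(1-o(1))np$ at $v$ and a counting argument over which of Breaker's edges land at $v$, one gets that a $(1-o(1))$ fraction of vertices, and in fact every vertex, retains $\Theta(\varepsilon np)$ free edges -- but the honest way is the box-game bookkeeping, which shows Breaker cannot concentrate enough of his budget on any single still-dangerous vertex, so that vertex keeps $\ge \varepsilon np/3$ free edges until it is saturated.

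The hard part will be verifying that the Gebauer--Szab\'o potential argument survives the passage from $K_n$ to $G$: in $K_n$ one uses that every vertex starts with exactly $n-1$ incident edges and that ``danger'' is cleanly controlled; on $G$ the free-degrees are only $(1\pm o(1))np$ and one must also ensure the auxiliary regularity -- e.g.\ that a low-free-degree dangerous vertex still has a neighbour that is dangerous, or at worst a free neighbour at all -- which is where properties (P1), (P6), (P9) and Lemma~\ref{HarmonicDeg} enter. Lemma~\ref{HarmonicDeg} is in fact tailored to exactly this: the sum $\sum_{j=1}^N e(v_j,J_j)/j = o(np)$ controls the cumulative ``cost'' incurred as dangerous vertices get saturated and removed one by one in increasing order of free-degree, showing this cost is negligible compared with the $\Theta(np)$ free-degree budget. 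So the structure of the argument is: (1) fix typical $G$; (2) define Maker's lowest-free-degree-dangerous-vertex strategy; (3) set up the danger potential and show via Theorem~\ref{boxgame} and Lemma~\ref{HarmonicDeg} that Breaker cannot saturate Maker's options; (4) read off the $cn$-move bound and the $\varepsilon np/3$ surviving-free-edges clause from Breaker's total budget. The one genuinely delicate point is step (3), matching constants so that the strict inequality $b < \frac{np}{\ln n}$ leaves enough slack after the $(1\pm o(1))$ and $o(np)$ error terms -- this is exactly why the target is $\frac{\varepsilon}{6}\ln n$ rather than the $\frac{\varepsilon}{3(1-\varepsilon)}\ln n$ of the $K_n$ case.
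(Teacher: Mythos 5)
Your high-level plan is right: run the Gebauer--Szab\'o potential argument on a typical $G$, use (P1) to get near-regularity, and use Lemma~\ref{HarmonicDeg} to absorb the cost of edges among saturated vertices. But two of the specific mechanisms you propose are wrong, and the second is a genuine gap that would break a write-up.

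First, the Maker strategy is not ``pick a dangerous vertex of lowest free-degree.'' In the paper (following \cite{GS}), Maker always plays at a vertex $v$ maximizing the danger value $\danger(v)=d_B(v)-2b\cdot d_M(v)$. This is a different ordering from lowest free-degree: $\danger$ heavily \emph{penalizes} Maker-degree (coefficient $2b$), whereas free-degree $d_F(v)=d_G(v)-d_M(v)-d_B(v)$ treats $d_M$ and $d_B$ symmetrically. The potential calculus in Lemmas~\ref{lem:makermindeg}--\ref{lem:breakermindeg} and Corollary~\ref{coro:danger-change} is built around the danger function and would not go through for the lowest-free-degree rule.

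Second, and more seriously, you propose to close the argument by ``Theorem~\ref{boxgame} applied in reverse,'' i.e.\ to deduce that BoxBreaker wins because $b<\ell/\ln m$. That step is invalid on two counts. (i) Theorem~\ref{boxgame} is a one-sided sufficient condition for \emph{BoxMaker's} win; the paper never states a converse, so you cannot conclude anything when $b\le \ell/\ln m$. (ii) Even granting a converse, the minimum degree game is not a box game: the ``boxes'' you would assign (the stars $E(v,V)$) overlap pairwise, while $Box(m,\ell,b)$ requires disjoint boxes. Overcoming exactly this overlap is the whole point of the Gebauer--Szab\'o danger-function argument, which the paper uses directly. What the proof actually does is assume Breaker wins with some strategy $S_B$, look at the reversed sequence $I_0\subset I_1\subset\cdots$ of vertices Maker played at, track the average danger $\avdanan_{B_{g-i}}(I_i)$ across rounds using Corollary~\ref{coro:danger-change}, split the sum at $N=n/\ln^3 n$, invoke Lemma~\ref{HarmonicDeg} for the $e(v_{g-i_j},I_{i_j})/ (j+1)$ terms, and derive $\avdanan_{B_1}(I_{g-1})>0$, which is absurd since all danger values start at $0$. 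The ``moreover'' clause also comes out of this framework for free: Breaker is declared the winner the moment some dangerous $v$ has fewer than $\eps np/3$ free incident edges, so disproving Breaker's win yields the free-edge guarantee directly, without any separate global budget count (your $cnb$ total-edge count cannot rule out concentration at a single vertex, as you yourself note). So the argument you'd need is not a box-game reduction but the danger-potential telescoping sum.
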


\textbf{Proof of Theorem~\ref{minimumdegree}.} The proof is very
similar to the proof of Theorem~\ref{mindegreeKn} so we omit some of
the calculations (for more details, the reader is referred to
\cite{GS}). Since claiming an extra edge is never a disadvantage for
any of the players, we can assume that Breaker is the first player
to move. A vertex $v\in V$ is called \emph{dangerous} if $d_M(v)<
c$. The game ends at the first moment in which either none of the
vertices is dangerous (and Maker won), or there exists a dangerous
vertex $v\in V$ with less than $\varepsilon np/3$ free edges
incident to it (and Breaker won). For every vertex $v\in V$ let
$\danger(v) := d_B(v) - 2b \cdot d_M(v)$ be the \emph{danger value}
of $v$. For a subset $X\subseteq V$, we define
$\avdanan(X)=\frac{\sum_{v\in X} \danger(v)}{|X|}$ (the average
danger of vertices in $X$).

The strategy proposed to Maker is the following one:

\textbf{Maker's strategy $S_M$:} As long as there is a vertex of
degree less than $c$ in Maker's graph, Maker claims a free edge $vu$
for some $v$ which satisfies $\danger(v)=\max\{\danger(u): u\in V\}$
(ties are broken arbitrarily).

Suppose towards a contradiction that Breaker has a strategy $S_B$ to
win against Maker who plays according to the strategy $S_M$ as
suggested above. Let $g$ be the length of this game and let
$I=\{v_1,\ldots,v_g\}$ be the multi-set which defines Maker's game,
i.e, in his $i$th move, Maker plays at $v_i$ (in fact, according to
the assumption Maker does not make his $g$th move, so let $v_g$ be
the vertex which made him lose). For every $0\leq i\leq g-1$, let
$I_i=\{v_{g-i},\ldots,v_g\}$. Following the notation of \cite{GS},
let $\danger_{B_i}(v)$ and $\danger_{M_i}(v)$ denote the danger
value of a vertex $v\in V$ \emph{directly before} Breaker's and
Maker's $i$th move, respectively. Notice that in his last move,
Breaker claims $b$ edges to decrease the minimum degree of the free
graph to at most $\varepsilon np/3$. In order to be able to do that,
directly before Breaker's last move $B_g$, there must be a dangerous
vertex $v_g$ with $d_M(v_g) \leq c-1$ and $d_F(v_g) \leq \frac{\eps
np}{3}+b$. By $(P1)$ of Theorem~\ref{PropertiesGnpDense} we can
assume that $\delta(G) \geq (1-\frac{\eps}{12})np$. Therefore we
have that $\danger_{B_g}(v_g) \geq (1 -
\frac{\eps}{12}-\frac{\eps}{3})np - b -
2b(c-1)=(1-\frac{5}{12}\eps)np-b(2c-1)\geq(1-\frac{3}{4}\eps)np$.

Analogously to the proof of Theorem~\ref{mindegreeKn} in \cite{GS},
we state the following lemmas which estimate the change of the
average danger after each move of any of the players. In the first
lemma we estimate the change after Maker's move:

\begin{lemma} \label{lem:makermindeg}
Let $i$, $1\leq i \leq g-1$,

$(i)$ if $I_i\neq I_{i-1}$, then $\avdanan_{M_{g-i}}(I_{i}) -
\avdanan_{B_{g-i+1}}(I_{i-1}) \geq 0.$

$(ii)$ if $I_i=I_{i-1}$, then $\avdanan_{M_{g-i}}(I_{i}) -
\avdanan_{B_{g-i+1}}(I_{i-1}) \geq \frac{2b}{|I_i|}.$
\end{lemma}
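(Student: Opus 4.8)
The plan is a direct bookkeeping argument about the danger potential; the random board plays no role in this particular lemma. The key observation is chronological: between the two reference instants --- ``directly before Maker's $(g-i)$-th move'' and ``directly before Breaker's $(g-i+1)$-th move'' --- the only thing that happens is Maker's $(g-i)$-th move, in which, by definition of $I$, he claims one free edge incident to $v_{g-i}$, say $v_{g-i}u$. This move leaves $d_B$ untouched and raises $d_M$ by exactly $1$ at the two endpoints $v_{g-i}$ and $u$, so every danger value is unchanged except $\danger(v_{g-i})$ and $\danger(u)$, each of which drops by exactly $2b$. In particular, writing $m:=|I_{i-1}|$,
$$\sum_{v\in I_{i-1}}\danger_{B_{g-i+1}}(v)\ \le\ \sum_{v\in I_{i-1}}\danger_{M_{g-i}}(v)\ -\ 2b\cdot\mathbbm{1}[v_{g-i}\in I_{i-1}],$$
the inequality (rather than equality) absorbing the possible extra $-2b$ when $u$ also lies in $I_{i-1}$. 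Dividing by $m$,
$$\avdanan_{B_{g-i+1}}(I_{i-1})\ \le\ \avdanan_{M_{g-i}}(I_{i-1})\ -\ \frac{2b}{m}\,\mathbbm{1}[v_{g-i}\in I_{i-1}].$$

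Next I would bring in Maker's strategy $S_M$: at his $(g-i)$-th move Maker chose $v_{g-i}$ to be a vertex of maximum danger, so $\danger_{M_{g-i}}(v_{g-i})=\max_{v\in V}\danger_{M_{g-i}}(v)\ge\avdanan_{M_{g-i}}(I_{i-1})$, a maximum dominating the average over the nonempty set $I_{i-1}$. Now I split on the two cases. In case $(i)$ we have $v_{g-i}\notin I_{i-1}$, hence $I_i=I_{i-1}\cup\{v_{g-i}\}$ with $|I_i|=m+1$; appending to a finite set a single new element whose value is at least the current average cannot decrease the average (an elementary cross-multiplication that is valid regardless of signs), so $\avdanan_{M_{g-i}}(I_i)\ge\avdanan_{M_{g-i}}(I_{i-1})$. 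Since the indicator vanishes in this case, the displayed bound gives $\avdanan_{M_{g-i}}(I_i)\ge\avdanan_{M_{g-i}}(I_{i-1})\ge\avdanan_{B_{g-i+1}}(I_{i-1})$, which is $(i)$.

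In case $(ii)$, $v_{g-i}\in I_{i-1}$, so $I_i$ and $I_{i-1}$ have the same underlying vertex set, $|I_i|=m$, and $\avdanan_{M_{g-i}}(I_i)=\avdanan_{M_{g-i}}(I_{i-1})$. Now the indicator equals $1$, so $\avdanan_{M_{g-i}}(I_i)-\avdanan_{B_{g-i+1}}(I_{i-1})\ge\frac{2b}{m}=\frac{2b}{|I_i|}$, which is $(ii)$.

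I do not anticipate any real obstacle: the entire content sits in two elementary facts --- ``one Maker move costs exactly $2b$ danger at each of its two endpoints'' and ``adding an above-average element cannot lower an average''. The only thing demanding care is the time-indexing: one must check that precisely one Maker move, and no Breaker move, separates the two reference instants, and that the edge claimed there is indeed incident to the globally maximum-danger vertex $v_{g-i}$. Given the shifted indices $g-i,\ g-i+1$ and the reversed enumeration $I_i=\{v_{g-i},\dots,v_g\}$, a careless reading of the bookkeeping is the one way this argument could fail.
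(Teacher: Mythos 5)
Your argument is correct and is essentially the standard proof from Gebauer--Szab\'o's paper, which the authors explicitly invoke here without reproducing it ("Analogously to the proof of Theorem~\ref{mindegreeKn} in \cite{GS}, we state the following lemmas..."). You correctly pin down the three ingredients: only Maker's $(g-i)$-th move separates the two reference instants (so the only danger values that change are those of the two endpoints of the claimed edge, each dropping by exactly $2b$); by the strategy $S_M$, the vertex $v_{g-i}$ has maximum danger at time $M_{g-i}$ and hence dominates the average over $I_{i-1}$; and adding an above-average element cannot lower an average, which settles case $(i)$, while case $(ii)$ falls directly out of the $-2b\,\mathbbm{1}[v_{g-i}\in I_{i-1}]$ term with the sets $I_i=I_{i-1}$ unchanged.
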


In the second lemma we estimate the change of the average danger
during Breaker's moves:

\begin{lemma}\label{lem:breakermindeg}
Let $i$ be an integer, $1\leq i\leq g-1$.

$(i)$ $\avdanan_{M_{g-i}}(I_i) - \avdanan_{B_{g-i}}(I_i) \leq
\frac{2b}{|I_i|}$

$(ii)$ $\avdanan_{M_{g-i}}(I_i) - \avdanan_{B_{g-i}}(I_i) \leq
\frac{b+e(v_{g-i},I_i)+a(i-1)-a(i)}{|I_i|}$, where $a(i)$ denotes
the number of edges spanned by $I_i$ which Breaker claimed in the
first $g-i-1$ rounds.
\end{lemma}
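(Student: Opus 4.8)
The plan is to observe that between the two snapshots named in the statement — ``directly before Breaker's $(g-i)$th move'' and ``directly before Maker's $(g-i)$th move'' — the only event that occurs is Breaker's $(g-i)$th move, in which Breaker claims at most $b$ previously free edges. Since $d_M(\cdot)$ is untouched during a Breaker move and $\danger(v)=d_B(v)-2b\,d_M(v)$, each edge Breaker claims raises $\danger(x)$ by exactly $1$ for each endpoint $x$ of that edge. Hence $\sum_{v\in I_i}\danger_{M_{g-i}}(v)-\sum_{v\in I_i}\danger_{B_{g-i}}(v)$ equals precisely the number of (edge, endpoint) incidences between Breaker's newly claimed edges and the set $I_i$, and dividing this quantity by $|I_i|$ is what we must bound. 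For part $(i)$ this is immediate: each of the at most $b$ new Breaker edges has at most two endpoints in $I_i$, so the incidence count is at most $2b$ and $\avdanan_{M_{g-i}}(I_i)-\avdanan_{B_{g-i}}(I_i)\le 2b/|I_i|$.

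For part $(ii)$ I would refine the count. Let $b_1$ (respectively $b_2$) be the number of edges Breaker claims in his $(g-i)$th move having exactly one endpoint (respectively both endpoints) in $I_i$. Then the incidence count equals $b_1+2b_2=(b_1+b_2)+b_2\le b+b_2$, so it suffices to prove $b_2\le e(v_{g-i},I_i)+a(i-1)-a(i)$. Write $A_j$ for the number of edges with both endpoints in $I_j$ that Breaker has claimed by the end of his $(g-i)$th move. By definition of $a(\cdot)$ one has $A_i=a(i)+b_2$ (the extra $b_2$ being exactly the edges inside $I_i$ claimed in round $g-i$), while $A_{i-1}=a(i-1)$ (as $a(i-1)$ already counts Breaker's edges inside $I_{i-1}$ over rounds $1,\dots,g-i$). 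Now split into cases. If $v_{g-i}\notin I_{i-1}$, then $I_i=I_{i-1}\cup\{v_{g-i}\}$ is a disjoint union and every edge inside $I_i$ is either inside $I_{i-1}$ or joins $v_{g-i}$ to $I_{i-1}$, the latter numbering at most $e(v_{g-i},I_{i-1})=e(v_{g-i},I_i)$ in $G$; hence $A_i\le A_{i-1}+e(v_{g-i},I_i)$, which rearranges to $b_2\le e(v_{g-i},I_i)+a(i-1)-a(i)$. If instead $v_{g-i}\in I_{i-1}$, then $I_i=I_{i-1}$, so $A_i=A_{i-1}$, i.e. $b_2=a(i-1)-a(i)$, and the bound holds since $e(v_{g-i},I_i)\ge 0$. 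In either case the incidence count is at most $b+e(v_{g-i},I_i)+a(i-1)-a(i)$; dividing by $|I_i|$ finishes the proof.

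I do not expect a genuine obstacle here: the lemma is a local bookkeeping computation whose whole content is that a Breaker edge adds $1$ to the danger of each of its endpoints and $0$ to every $d_M$. The only points demanding care are the exact range of rounds over which $a(i)$ and $a(i-1)$ are defined, and the separation of the case $I_i=I_{i-1}$ (when Maker's $(g-i)$th move repeats an earlier target vertex) from $I_i\supsetneq I_{i-1}$; once these are tracked correctly, both inequalities fall out.
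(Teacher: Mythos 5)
Your proof is correct. The paper itself does not write out a proof of this lemma (it appeals to the analogous computation in \cite{GS}); your argument is exactly the bookkeeping one would expect — Breaker's move leaves $d_M$ untouched, so the danger sum changes by the incidence count between his $\le b$ new edges and $I_i$, bounded trivially by $2b$ for part $(i)$ and, after splitting into edges with one or two endpoints in $I_i$ and tracking $a(i)$, $a(i-1)$, $e(v_{g-i},I_i)$ over the two cases $I_i=I_{i-1}$ and $I_i\supsetneq I_{i-1}$, by $b+e(v_{g-i},I_i)+a(i-1)-a(i)$ for part $(ii)$ — and matches the cited source's approach.
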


Combining Lemmas~\ref{lem:makermindeg} and~\ref{lem:breakermindeg},
we get the following corollary which estimates the change of the
average danger after a full round:

\begin{corollary}\label{coro:danger-change}
Let $i$ be an integer, $1\leq i\leq g-1$.

$(i)$ if $I_i=I_{i-1}$, then $\avdanan_{B_{g-i}}(I_{i}) -
\avdanan_{B_{g-i+1}}(I_{i-1}) \geq 0.$

$(ii)$ if $I_i\neq I_{i-1}$, then $\avdanan_{B_{g-i}}(I_{i}) -
\avdanan_{B_{g-i+1}}(I_{i-1}) \geq -\frac{2b}{|I_i|}$

$(iii)$ if $I_i\neq I_{i-1}$, then $\avdanan_{B_{g-i}}(I_{i}) -
\avdanan_{B_{g-i+1}}(I_{i-1}) \geq
-\frac{b+e(v_{g-i},I_i)+a(i-1)-a(i)}{|I_i|}$, where $a(i)$ denotes
the number of edges spanned by $I_i$ which Breaker took in the first
$g-i-1$ rounds.
\end{corollary}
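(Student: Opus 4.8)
The plan is to obtain the one-round estimate by telescoping the average danger through the intermediate state \emph{directly before Maker's $(g-i)$-th move}, and then to feed in Lemmas~\ref{lem:makermindeg} and~\ref{lem:breakermindeg}. Recall that Breaker moves first, so the moves occur in the order $B_1, M_1, B_2, M_2, \dots$; consequently between the state directly before $B_{g-i}$ and the state directly before $B_{g-i+1}$ exactly two moves take place, namely Breaker's move $B_{g-i}$ followed by Maker's move $M_{g-i}$. Adding and subtracting $\avdanan_{M_{g-i}}(I_i)$ yields the identity
\[
\avdanan_{B_{g-i}}(I_i) - \avdanan_{B_{g-i+1}}(I_{i-1}) = \bigl[\avdanan_{B_{g-i}}(I_i) - \avdanan_{M_{g-i}}(I_i)\bigr] + \bigl[\avdanan_{M_{g-i}}(I_i) - \avdanan_{B_{g-i+1}}(I_{i-1})\bigr],
\]
in which the first bracket is (the negative of) the change of average danger over $I_i$ during Breaker's move $B_{g-i}$, bounded below via Lemma~\ref{lem:breakermindeg}, and the second bracket is the change of average danger during Maker's move $M_{g-i}$, bounded below via Lemma~\ref{lem:makermindeg}.

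For part $(i)$, when $I_i = I_{i-1}$ (Maker has just played at a vertex already present in the multiset), I would invoke Lemma~\ref{lem:makermindeg}$(ii)$ to get that the second bracket is at least $\frac{2b}{|I_i|}$ and Lemma~\ref{lem:breakermindeg}$(i)$ to get that the first bracket is at least $-\frac{2b}{|I_i|}$; summing these, the two terms cancel and the asserted bound $\geq 0$ follows.

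For parts $(ii)$ and $(iii)$, when $I_i \neq I_{i-1}$, Lemma~\ref{lem:makermindeg}$(i)$ only gives that the second bracket is $\geq 0$ (adjoining one fresh vertex of maximum danger does not lower the average over the set). For the first bracket I would then plug in either Lemma~\ref{lem:breakermindeg}$(i)$, giving $-\frac{2b}{|I_i|}$ and hence part $(ii)$, or Lemma~\ref{lem:breakermindeg}$(ii)$, giving $-\frac{b + e(v_{g-i}, I_i) + a(i-1) - a(i)}{|I_i|}$ and hence part $(iii)$. In each case the sum of the two bracket bounds is precisely the claimed inequality.

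Since Lemmas~\ref{lem:makermindeg} and~\ref{lem:breakermindeg} already carry all the combinatorial content, I do not expect a genuine obstacle here; the only thing to watch is the bookkeeping --- that increasing $i$ corresponds to going backwards in time, that $|I_i|$ equals $|I_{i-1}|$ or $|I_{i-1}|+1$ according to the case, and that the averaging set in each invocation of a lemma is exactly the one that lemma is stated for, so that no uncontrolled change of averaging set is introduced in passing from $I_{i-1}$ to $I_i$. This is automatic in the telescoping above, since the Maker-move lemma is precisely the one that already relates the averages over the two different sets, while the Breaker-move lemma is applied purely on $I_i$.
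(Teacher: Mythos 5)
Your proposal is correct and is exactly what the paper intends by ``combining'' Lemmas~\ref{lem:makermindeg} and~\ref{lem:breakermindeg}: inserting the intermediate state directly before Maker's $(g-i)$-th move, telescoping, and reading off the three bounds from the corresponding parts of the two lemmas. The paper omits this routine verification, and your write-up fills it in in the intended way with no gaps.
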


In order to complete the proof, we prove that before Breaker's first
move, $\avdanan_{B_1}(I_{g-1})>0$, thus obtaining a contradiction.

Let $N:=\frac{n}{\ln^3 n}$. For the analysis, we split the game into
two parts: the main game, and the end game which starts when
$|I_{i}| \leq N$.

Let $|I_g|=r$ and let $i_1 < \ldots < i_{r-1}$ be those indices for
which $I_{i_j}\neq I_{i_j-1}$. Note that $|I_{i_j}|=j+1$. Note also
that since $I_{i_j-1} = I_{i_{j-1}}$ and $i_{j-1} \leq i_j-1$,
$a(i_j-1) \leq a(i_{j-1})$.

Recall that the danger value of $v_g$ directly before $B_g$ is at
least
\begin{equation}
\danger_{B_g}(v_g)>(1-\frac{3}{4}\eps)np.\label{dangBgI0}
\end{equation}

Assume first that $r < N$.


\begin{eqnarray}\label{eq:lastlinehugecalc}
\avdanan_{B_1}(I_{g-1}) & = & \avdanan_{B_g}(I_{0}) +
\sum_{i=1}^{g-1}
\left( \avdanan_{B_{g-i}}(I_{i}) - \avdanan_{B_{g-i+1}}(I_{i-1}) \right) \nonumber \\
& \geq & \avdanan_{B_g}(I_{0}) + \sum_{j=1}^{r-1} \left(
\avdanan_{B_{g-i_j}}(I_{i_j}) - \avdanan_{B_{g-i_j+1}}(I_{i_j-1})
\right)
\enspace\enspace \mbox{[by Corollary~\ref{coro:danger-change}$(i)$]} \nonumber \\
& \geq & \avdanan_{B_g}(I_{0}) - \sum_{j=1}^{r-1}
\frac{b+e(v_{g-i_j},I_{i_j})+a(i_j-1)-a(i_j)}{j+1}
\enspace\enspace \mbox{[by Corollary~\ref{coro:danger-change}$(iii)$]} \nonumber \\
& \geq & \avdanan_{B_g}(I_{0}) - b\ln
r-\sum_{j=1}^{r-1}\frac{e(v_{g-i_j},I_{i_j})}{j+1}-\frac{a(0)}{2}+\sum_{j=2}^{r-1}\frac{a(i_{j-1})}{(j+1)j}+\frac{a(i_{r-1})}{r}
\enspace\enspace \mbox{} \nonumber \\
& \geq & \avdanan_{B_g}(I_{0}) - b\ln r-o(np)
\enspace\enspace \mbox{[by Lemma ~\ref{HarmonicDeg}]} \nonumber \\
& > & (1-\frac{3}{4}\varepsilon)np-(1-\varepsilon+o(1))np
\enspace\enspace \mbox{[by~\eqref{dangBgI0}]}\nonumber\\
& > &0.
\end{eqnarray}

Assume now that $r \geq N$.
\begin{eqnarray}
\avdanan_{B_1}(I_{g-1}) & = & \avdanan_{B_g}(I_{0}) +
\sum_{i=1}^{g-1}
\left( \avdanan_{B_{g-i}}(I_{i}) - \avdanan_{B_{g-i+1}}(I_{i-1}) \right) \nonumber \\
& \geq & \avdanan_{B_g}(I_{0}) + \sum_{j=1}^{r-1} \left(
\avdanan_{B_{g-i_j}}(I_{i_j}) - \avdanan_{B_{g-i_j+1}}(I_{i_j-1})
\right)
\enspace\enspace \mbox{[by Corollary~\ref{coro:danger-change}$(i)$]} \nonumber \\
& = & \avdanan_{B_g}(I_{0}) + \sum_{j=1}^{N - 1} \left(
\avdanan_{B_{g-i_j}}(I_{i_j}) - \avdanan_{B_{g-i_j+1}}(I_{i_j-1})
\right)
+ \nonumber \\
&  & \sum_{j=N}^{r - 1}
\left( \avdanan_{B_{g-i_j}}(I_{i_j}) - \avdanan_{B_{g-i_j+1}}(I_{i_j-1}) \right) \nonumber \\
& \geq & \danger_{B_g}(v_g) - \sum_{j=1}^{N-1} \frac{b}{j+1}- o(np)
-\sum_{j=N}^{r - 1} \frac{2b}{j+1}
\enspace\enspace \mbox{[by Corollary~\ref{coro:danger-change}$(ii)$ and (\ref{eq:lastlinehugecalc})]} \nonumber \\
& \geq &   (1-\frac{3}{4}\varepsilon)np-b\ln n-o(np)-2b(\ln n-\ln
\frac{n}{\ln^3 n}) \enspace\enspace \mbox{[by~\eqref{dangBgI0}]}
\nonumber \\
& = & (1-\frac{3}{4}\varepsilon)np-(1-\eps)np-o(np)-6b\ln\ln n
\nonumber\\
& = & \frac{\eps}{4}np-o(np) \nonumber\\
& > & 0.\nonumber
\end{eqnarray}

This completes the proof.  {\hfill $\Box$ \medskip\\}

\section{Maker-Breaker games on $\gnp$}

\subsection{Breaker's win}
In this subsection we prove Theorem~\ref{BreakerIsolatingAvertex}.

Chv\'atal and Erd\H{o}s proved in \cite{CE} that playing on the edge
set of the complete graph $K_n$, if Breaker's bias is
$b=(1+\varepsilon)\frac{n}{\ln n}$, then Breaker is able to isolate
a vertex in his graph and thus to win a lot of natural games such as
the perfect matching game, the hamiltonicity game and the
$k$-connectivity game.

In their proof, Breaker wins by creating a large clique which is
disjoint of Maker's graph and then playing the box game on the stars
centered in this clique. Our proof is based on the same idea.

\textbf{Proof of Theorem~\ref{BreakerIsolatingAvertex}:} First, we
may assume that $p \geq \frac{\ln n}{n}$, since otherwise $G\sim
\gnp$ typically contains isolated vertices and Breaker wins no
matter how he plays. Now we introduce a strategy for Breaker and
then we prove it is a winning strategy. At any point during the
game, if Breaker cannot follow the proposed strategy then he
forfeits the game. Breaker's strategy is divided into the following
two stages:

\textbf{Stage I:} Throughout this stage Breaker maintains a subset
$C \subseteq V$ which satisfies the following properties:
\begin{description}
\item[$(i)$] $E_G(C)=E_B(C)$.
\item[$(ii)$] $d_M(v) = 0$ for every $v \in C$.
\item[$(iii)$] $d_G(v) \leq (1+\eps/2)np$ for every $v \in C$.
\end{description}
Initially, $C=\emptyset$. In every move, Breaker increases the size
of $C$ by at least one. This stage ends after the first move in
which $|C|\geq \frac{n}{\ln^2 n}$.

\textbf{Stage II:} For every $v\in C$, let $A_v=\{vu\in E(G):
vu\notin E(B)\}$. In this stage, Breaker claims all the elements of
one of these sets.

It is evident that if Breaker can follow the proposed strategy, then
he isolates a vertex in Maker's graph and wins the game. It thus
remains to prove that Breaker can follow the proposed strategy. We
consider each stage separately.

\textbf{Stage I:} Notice that in every move Maker can decrease the
size of $C$ by at most one. Hence, it is enough to prove that in
every move Breaker is able to find at least two vertices which are
isolated in Maker's graph and have bounded degree as required, and
to claim all the free edges between them and $C$. For this it is
enough to prove that Breaker can always find two vertices $u,v\in
V\setminus C$ which have the proper degree in $G$ and are isolated
in Maker's graph, and such that $e(v,C)$, $e(v,C)\leq \frac{b}{2}$.
Since this stage lasts $o(n)$ moves, and the number of vertices with
too high degree in $G$ is $o(n)$ by property $(P1)$ of
Theorem~\ref{PropertiesGnpDense}, the existence of such vertices is
trivial by property $(P9)$ of Theorem~\ref{PropertiesGnpDense}.

\textbf{Stage II:} Notice that $|C|\geq \frac{n}{\ln^2 n}$ and that
$A_v\cap A_u=\emptyset$ for every two vertices $u\neq v$ in $C$. In
addition, by the way Breaker chooses his vertices we have that
$|A_v|\leq (1+\varepsilon/2)np$ for every $v \in C$. Recall that
$b=(1+\varepsilon)\frac{np}{\ln n}>\frac{(1+\varepsilon/2)np}{\ln
|C|}$. Therefore, by Theorem~\ref{boxgame} Breaker (as BoxMaker)
wins the Box Game on these sets.

This completes the proof. {\hfill $\Box$\medskip\\}

\subsection{Maker's win}

In this subsection we prove Theorems~\ref{mainDense} and
\ref{mainSparse}. We start with providing Maker with a winning
strategy in the Hamiltonicity game for each case (which implies the
perfect matching game) and then we sketch the changes which need to
be done in order to turn it into a winning strategy in the
$k$-connectivity game as well.

\textbf{Proof of Theorem~\ref{mainDense}.} First we describe a
strategy for Maker and then prove it is a winning strategy.

At any point during the game, if Maker is unable to follow the
proposed strategy (including the time limits), then he forfeits the
game. Maker's strategy is divided into the following three stages:

\textbf{Stage I:} Maker builds an $(\frac{10000n}{\ln\ln
n},2)$-expander within $\frac{100n\ln n}{\ln \ln n}$ moves.

\textbf{Stage II:} Maker makes his graph an $(n/5,2)$-expander
within additional $\frac{100n\ln n}{\ln \ln n}$ moves.

\textbf{Stage III:} Maker makes his graph Hamiltonian by adding at
most $n$ boosters.

It is evident that if Maker can follow the proposed strategy without
forfeiting the game he wins. It thus suffices to prove that indeed
Maker can follow the proposed strategy. We consider each stage
separately.

\textbf{Stage I:} In his first $\frac{100n\ln n}{\ln \ln n}$ moves,
Maker creates a graph with minimum degree $c=c(n)=\frac{100\ln
n}{\ln \ln n}$. Maker plays according to the strategy proposed in
Theorem \ref{minimumdegree} except of the seemingly minor but
crucial change that in every move, when Maker needs to claim an edge
incident with a vertex $v$, Maker randomly chooses such a free edge.
We prove that, with a positive probability, this non-deterministic
strategy ensures that Maker's graph is an $(\frac{10000n}{\ln\ln
n},2)$ expander and then, since our game is a perfect information
game, we conclude that indeed there exists a deterministic such
strategy for Maker. Recall that according to the strategy proposed
in Theorem \ref{minimumdegree}, at any move Maker claims a free edge
$vu$ with $\danger(v)=\max\{\danger(u):u\in V\}$. In this case we
say that the edge $vu$ is \emph{chosen} by $v$. We wish to show that
the probability of having a subset $A\subseteq V$ with $|A|\leq
\frac{10000n}{\ln\ln n}$ and $|N_M(A)|\leq 2|A|-1$ is $o(1)$. To
that end, we can assume that $G$ satisfies all the properties listed
in Theorem~\ref{PropertiesGnpDense} and Theorem~\ref{minimumdegree}.

Assume that there exists a subset $A\subset V$ of size $|A|\leq
\frac{10000n}{\ln\ln n}$ such that after this stage $N_M(A)$ is
contained in a set $B$ of size at most $2|A|-1$. This implies that
\[|E_M(A, A\cup B)|\geq c|A|/2=\frac{50|A|\ln n}{\ln \ln n}.\]

%
%

Recall that $f(n):=\frac{np}{\ln n}$. We distinguish between the
following two cases:

\textbf{Case I:} At least $c|A|/4$ edges of Maker which are incident
to $A$ were chosen by vertices from $A$. Notice that if $|A|\leq
\frac{n\ln\ln n}{\ln n}$ there are at most $o(|A|)$ vertices $v\in
A$ such that $e(v,A\cup B)= \Omega(f(n)(\ln\ln n)^2)$, since
otherwise we have that $e(A\cup B)=\Omega(f(n)\ln\ln ^2 n|A|)$ which
contradicts $(P3)$ of Theorem~\ref{PropertiesGnpDense} and that if
$\frac{n\ln\ln n}{\ln n}<|A|\leq \frac{10000n}{\ln\ln n}$ then there
are at most $o(|A|)$ vertices $v\in A$ such that $e(v,A\cup B)=
\Omega(np)$ (follows from $(P2)$ of
Theorem~\ref{PropertiesGnpDense}). Consider an edge $e=ab$ with
$a\in A$ and $b\in A\cup B$ and assume that $e$ has been chosen by
$a$. Notice that by Theorem~\ref{minimumdegree}, when Maker chose
$e$, the vertex $a$ had at least $\varepsilon np/3$ free neighbors.
Therefore, for at least $(1-o(1))|A|$ such vertices $a\in A$, the
probability that Maker chose an edge with a second endpoint in
$A\cup B$ is at most $\left(\frac{f(n)(\ln \ln n)^2}{\varepsilon
np/3}\right)=\frac{3(\ln\ln n)^2}{\varepsilon \ln n}$ when $|A|\leq
\frac{n\ln\ln n}{\ln n}$ or an arbitrarily small constant $\delta>0$
when $\frac{n\ln\ln n}{\ln n}<|A|\leq \frac{10000n}{\ln\ln n}$.
Therefore, the probability that all of Maker's edges incident to $A$
were chosen in $A\cup B$ is at most $\left(\frac{3(\ln\ln
n)^2}{\varepsilon \ln n}\right)^{(1-o(1))c|A|/4}$ for $|A|\leq
\frac{n\ln\ln n}{\ln n}$ and at most $\delta^{(1-o(1))c|A|/4}$
otherwise. Applying the union bound we get that the probability that
there exists such $A$ (with $|N_M(A)|\leq 2|A|-1$) is at most
\begin{align*}
& \sum_{|A|< \frac{n\ln\ln n}{\ln n}}\binom{n}{|A|}\binom{n}{2|A|-1}
\left(\frac{3(\ln\ln n)^2}{\varepsilon \ln
n}\right)^{(1-o(1))c|A|/4}+\sum_{|A|=\frac{n\ln\ln n}{\ln
n}}^{\frac{10000n}{\ln\ln n}}\binom{n}{|A|}\binom{n}{2|A|-1}
\delta^{(1-o(1))c|A|/4}\\
& \leq  \sum_{|A|< \frac{n\ln\ln n}{\ln
n}}n^{3|A|}\left(\frac{3(\ln\ln n)^2}{\eps\ln
n}\right)^{\frac{24|A|\ln n}{\ln\ln n}}+\sum_{|A|=\frac{n\ln\ln
n}{\ln n}}^{\frac{10000n}{\ln\ln n}}
\left(\frac{e^3n^3}{4|A|^3}\right)^{|A|}\delta^{\frac{24|A|\ln
n}{\ln\ln n}} \\
& \leq \sum_{|A|< \frac{n\ln\ln n}{\ln
n}}\left[n^3\exp\left(\frac{24\ln n}{\ln\ln
n}\ln\left(\frac{3(\ln\ln n)^2}{\eps\ln
n}\right)\right)\right]^{|A|}+ \sum_{|A|=\frac{n\ln\ln n}{\ln
n}}^{\frac{10000n}{\ln\ln n}} \left(\alpha \frac{\ln^3 n}{\ln \ln ^3
n}\delta^{\frac{24\ln n}{\ln\ln n}}\right)^{|A|} \\
& \leq \sum_{|A|< \frac{n\ln\ln n}{\ln
n}}\left[n^3\exp\left(-(1-o(1))24\ln
n\right)\right]^{|A|}+o(1)=o(1).
\end{align*}

\textbf{Case II:} At least $c|A|/4$ edges of Maker which are
incident to $A$ were chosen by vertices from $B$. As in Case I,
notice that there are at most $o(|B|)$ vertices $v\in B$ such that
$e(v,A)\geq f(n)(\ln\ln n)^2$ when $|B|\leq \frac{2n\ln\ln n}{\ln
n}$ and at most $o(|B|)$ vertices $v\in B$ such that
$e(v,A)=\Omega(np)$ when $\frac{2n\ln\ln n}{\ln n}\leq |B|
\frac{20000n}{\ln\ln n}$. Similarly to the previous case, with the
only difference being that not all the edges which were chosen by
vertices from $B$ have to touch $A$, we get that the probability
that all Maker's edges incident to $A$ were chosen in $A\cup B$ is
at most
$$\binom{c|B|}{c|A|/4}\left(\frac{3(\ln\ln n)^2}{\varepsilon
\ln n}\right) ^{(1-o(1))c|A|/4}$$  or
$$\binom{c|B|}{c|A|/4}\delta ^{(1-o(1))c|A|/4},$$ for an arbitrarily small $\delta$, for $|B|\leq \frac{2n\ln\ln n}{\ln n}$
or $\frac{2n\ln\ln n}{\ln n}\leq |B| \frac{20000n}{\ln\ln n}$,
respectively (the binomial coefficient corresponds to the number of
possible choices of edges from $E_M(A,B)$ out of all edges chosen by
vertices from $B$). Applying the union bound, similarly to the
computations in Case I, we get that the probability that there
exists such $A$ (with $|N_M(A)|\leq 2|A|-1$) is $o(1)$.


This completes the proof that Maker can build a $(\frac{10000 n}{\ln
\ln n},2)$-expander fast and thus is able to follow Stage I of the
proposed strategy.

\textbf{Stage II:} It is enough to prove that Maker has a strategy
to ensure that $E_M(A,B)\neq \emptyset$ for every two disjoint
subsets $A,B\subseteq V$ of sizes $|A|=\frac{10000n}{\ln\ln n}$ and
$|B|=n/10$. Otherwise, there exists a subset $X\subseteq V$ of size
$\frac{10000n}{\ln\ln n}\leq |X|\leq n/5$ such that $|X\cup N(X)|<
3|X|$. In this case, there exist two subsets $A\subseteq X$ and
$B\subseteq V\setminus (X\cup N(X)) $ with $|A|=\frac{10000n}{\ln\ln
n}$ and $|B|=n/10$ such that $E_M(A,B)=\emptyset$.

Recall that by Property $(P8)$ of Theorem~\ref{PropertiesGnpDense},
$G\sim \gnp$ is typically such that for every two such subsets
$A,B\subseteq V$ and for every $\alpha>0$ we have that $$ e_G(A,B)
\geq (1-\alpha)|A||B|p\geq \frac{999n^2p }{\ln \ln n}.$$

To achieve his goal for this stage, Maker can use the trick of fake
moves and to play as Breaker in the $(\frac{np\ln \ln n}{100\ln
n},1)$ Maker-Breaker game where the winning sets are
$$\mathcal F=\{E_F(A,B): A,B\subseteq V \textrm{, }A \cap
B=\emptyset \textrm{, }|A|=\frac{10000n}{\ln\ln n} \textrm{ and
}|B|=n/10\}.$$

Notice that since so far Breaker has played at most $\frac{100n\ln
n}{\ln\ln n}$, we get that $e_F(A,B)\geq \frac{899n^2p}{\ln\ln n}$
for every $A,B\subset V(G)$ of sizes $|A|=\frac{10000n}{\ln\ln n}$
and $|B|=n/10\}$. Finally, since the following inequality holds
$$\binom{n}{\frac{10000n}{\ln\ln n}}\binom{n}{n/10} 2^{-89900n\ln
n/(\ln\ln n)^2}\leq 4^n2^{-\omega(n)}=o(1)$$ it follows by
Theorems~\ref{bwin} and~\ref{fakemoves} that indeed Maker can
achieve his goals for this stage within $\frac{e(G)}{np\ln\ln
n/100\ln n}=\frac{100n\ln n}{\ln\ln n}$ moves (recall that
$e(G)=\Theta(n^2p)$).

\textbf{Stage III:} So far Maker has played at most $\frac{200n\ln
n}{\ln\ln n}$ moves (and at least $\frac{50n\ln n}{\ln\ln n}$ moves)
and his graph is an $(n/5,2)$-expander. Notice that for the choice
$a = 2$ Lemma~\ref{l:enoughboosters} holds. In addition, Maker and
Breaker together claimed $o(n^2p)$ edges of $G$. Therefore, there
are still $\Theta(n^2p)$ free boosters in $G$, so Maker can easily
claim $n$ boosters and to turn his graph into a Hamiltonian graph.

This completes the proof that Maker wins the game $\mathcal H(G)$
(and of course also the game $\mathcal M(G)$). {\hfill $\Box$
\medskip
\\}

Now, we briefly sketch the proof of Theorem~\ref{mainSparse}.

\textbf{Sketch of proof of Theorem~\ref{mainSparse}.} Let $c > 100$,
$p=\frac{cn}{\ln n}$ and $G\sim \gnp$. The upper bound on $b^*$ is
obtained immediately from Theorem~\ref{BreakerIsolatingAvertex}. We
wish to show that $G$ is typically such that given $b\leq c/10$,
Maker has a winning strategy in the $(1,b)$ game $\mathcal H(G)$.
First, we make the following modifications to
Theorem~\ref{minimumdegree}:
\begin{itemize}
    \item In the statement of the theorem, $p = \frac{c\ln n}{n}$,
    $b \leq \frac{np}{10\ln n} = \frac{c}{10}$, and $\eps$ is some
    positive constant.
    \item By similar calculations to those in $(P1)$ of
    Theorem~\ref{PropertiesGnpDense}, we can assume that $\delta(G)\geq \frac{1}{2}np$.
    \item We conclude that $\danger_{B_g}(v_g) \geq (\frac{1}{2}-\frac{\eps}{3})np - b(2c-1)
    \geq(\frac{1}{2}-\frac{\eps}{3}-\frac{\eps}{60})np$.
    \item Finally, we use the following calculation:
        \begin{eqnarray}
        \avdanan_{B_1}(I_{g-1}) & \geq & \avdanan_{B_g}(I_{0}) + \sum_{j=1}^{r-1} \left(
        \avdanan_{B_{g-i_j}}(I_{i_j}) - \avdanan_{B_{g-i_j+1}}(I_{i_j-1})\right)\nonumber\\
        & \geq & \avdanan_{B_g}(I_{0}) - \sum_{j=1}^{r-1}\frac{2b}{j+1}\nonumber\\
        & \geq & \avdanan_{B_g}(I_{0}) - 2b\ln n\nonumber\\
        & \geq & (\frac{1}{2}-\frac{\eps}{3}-\frac{\eps}{60})np -\frac{1}{5}np\nonumber\\
        & > &0\nonumber
        \end{eqnarray}
    to get a contradiction (for sufficiently small $\eps$).
\end{itemize}

With this variant of Theorem~\ref{minimumdegree}, adapted to the
case $p=\Theta(\frac{\ln n}{n})$, the proof of Theorem
\ref{mainSparse} goes the same as the proof of
Theorem~\ref{mainDense}, mutatis mutandis.{\hfill $\Box$\medskip\\}

\textbf{Remark:} To win the $k$-connectivity game, Maker follows
Stages I and II of the proposed strategy $S_M$ with the following
parameters changes:
\begin{itemize}
    \item In Stage I, Maker creates an $(\frac{n\ln\ln n}{\ln n},
    k)$-expander.
    \item In Stage II, Maker makes his graph an $(\frac{n+k}{2k},
    k)$-expander by claiming an edge between any two disjoint
    subsets $A,B \subseteq V$ such that $|A| = \frac{n\ln\ln n}{\ln
    n}$, $|B| = \frac{n}{10k}$.
\end{itemize}

Then, by Lemma~\ref{kconnectivity}, Maker's graph is $k$-connected
and he wins the game. We omit the straightforward details and the
calculations, which are almost identical to those of the
Hamiltonicity game.

\section{Avoider-Enforcer games on $\gnp$}
\subsection{Avoider's win}

In this subsection we prove Theorem~\ref{AvoiderWin}.
%

\begin{proof} In order to isolate a vertex in his graph Avoider does the
following: before his first move (regardless of the identity of the
first player) Avoider identifies a set $U \subseteq V$ of size
$\sqrt{\frac{n}{\ln n}}$ with $e(U) \leq \frac{np}{2 \ln n}$ (he can
find such a set since this is the expected number of edges inside a
set of this size). Assume $|U| \equiv 0(\textrm{mod } 3)$ (otherwise
Avoider removes from $U$ a vertex or two and everything works the
same). Then, in his first move, Avoider claims all the edges not
incident to $U$. He then ignores -- until he can no longer do so --
all the edges inside $U$ and pretends he is Enforcer in the
following reverse box game: he divides the vertices of $U$ into
triplets -- each triplet is a box. The elements in each box are all
the edges between the three vertices of the box and $V \setminus U$.

Avoider does not claim edges inside $U$ unless he has to. Enforcer,
however, in his new role as Avoider in the reverse box game, may
claim occasionally edges inside $U$. However, since his bias is too
big, in each move he must make at least $b - e(U) \geq
\frac{24.5np}{\ln n}$ of his steps "in the boxes" (between $U$ and
its complement).

We may assume that $p \geq \frac{\ln n}{n}$, since otherwise $G\sim
\gnp$ typically contains isolated vertices and Avoider wins no
matter how he plays. Therefore, by $(P1)$ we can bound from above
the degree of every vertex in the graph by $4np$, and so the size of
each box is bounded from above by $12np$. The number of boxes in
this game is $\frac{1}{3}\sqrt{\frac{n}{\ln n}}$. Enforcer's bias is
1 and Avoider's bias is at least $\frac{24.5np}{\ln n}$. Putting it
all together in the terms of Lemma~\ref{EnforcerWinBox} we get that,
as
$$2\exp\left(\frac{12np}{\frac{24.5np}{\ln n}}\right) <
\exp(0.49\ln n) < \frac{1}{3}\sqrt{\frac{n}{\ln n}},$$ Enforcer wins
this game, i.e. Avoider is forced to claim all the elements in one
of the boxes.

Now let's go back to the original game. By what we have just shown,
as long as Avoider does not claim edges inside $U$, he has at least
three isolated vertices in his graph. So if he can avoid claiming
edges inside $U$ throughout the game, he wins. If he is forced to
claim an edge inside $U$, it means that all the remaining free edges
on the board are inside $U$. By claiming one edge he will touch at
most two of his isolated vertices, and Enforcer in his next move
will be forced to claim all the remaining edges on the board,
leaving at least one isolated vertex in Avoider's graph.
\end{proof}

\subsection{Enforcer's win}
In this subsection we prove Theorem~\ref{EnforcerWin}. For this
proof we would like to use similar techniques to those used by
Krivelevich and Szab\'o in \cite{KS}. We use the following
Hamiltonicity criterion by Hefetz et al:

\begin{lemma}[\textbf{\cite{HKS1}, Theorem 1.1}]\label{HamCrit}
Let $12 \leq d \leq e^{\sqrt[3]{\ln n}}$ and let $G$ be a graph on
$n$ vertices satisfying properties $\textbf{P1}$, $\textbf{P2}$
below:

\begin{enumerate}
\item [\textbf{P1}] For every $S \subseteq V$ , if $|S| \leq k_1(n,d) := \frac{n \ln
\ln n \ln d}{d \ln n \ln \ln \ln n}$ then $|N(S)| \geq d|S|$;
\item [\textbf{P2}] There is an edge in $G$ between any two disjoint subsets $A,B
\subseteq V$ such that $|A|,|B| \geq k_2(n,d) := \frac{n \ln \ln n
\ln d}{4130 \ln n \ln \ln \ln n}$.
\end{enumerate}

Then $G$ is Hamiltonian, for sufficiently large $n$.
\end{lemma}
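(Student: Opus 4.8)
The plan is to prove this by P\'osa's rotation--extension technique: suppose $G$ satisfies \textbf{P1} and \textbf{P2} but is not Hamiltonian, and derive a contradiction. First I would check that $G$ is connected. A connected component $C$ has $N(C)=\emptyset$, so \textbf{P1} rules out $|C|\le k_1(n,d)$ (otherwise $|N(C)|\ge d|C|\ge 12>0$); and for a subset $S\subseteq C$ with $|S|=k_1$, \textbf{P1} gives $|C|-k_1\ge|N(S)|\ge dk_1$. Since the identity $dk_1=4130\,k_2(n,d)$ holds by the very definitions of $k_1$ and $k_2$, every component has more than $k_2$ vertices, so by \textbf{P2} there can be only one.

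Now let $P$ be a longest path in $G$. Every rotation of $P$ has the same vertex set $V(P)$, and the endpoint of any longest path has all of its $G$-neighbours in that vertex set. Fix one endpoint of $P$ and let $A$ be the set of endpoints reachable from the other end by iterated rotations. Then $N_G(A)\subseteq V(P)$, and P\'osa's lemma gives $|N_G(A)|\le 3|A|$; since $d\ge 12>3$, \textbf{P1} forces $|A|>k_1$, and then applying \textbf{P1} to a $k_1$-subset of $A$ and using $dk_1=4130\,k_2$ yields $|N_G(A)|\ge 4130\,k_2$, hence $|A|\ge\frac{4130}{3}k_2>k_2$. For a fixed $a\in A$, take a longest path $P_a$ whose endpoints are the fixed vertex and $a$; rotating $P_a$ about $a$ produces, in exactly the same manner, a set $B_a$ with $|B_a|\ge k_2$ all of whose $G$-neighbours lie in $V(P)$. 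If some $a\in A$ had a $G$-neighbour $b\in B_a$, then adding the edge $ab$ to the reachable longest path from $a$ to $b$ would create a cycle on $V(P)$; since $G$ is connected and not Hamiltonian this cycle is not spanning, so an edge leaving it yields a path on $|V(P)|+1$ vertices, contradicting the maximality of $P$. Hence no such edge exists.

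It remains to turn this non-adjacency into a violation of \textbf{P2}, that is, to exhibit two \emph{disjoint} sets, each of size at least $k_2$, with no $G$-edge between them. If $|A|$ is not a positive fraction of $n$, the pair $A$ and $V\setminus(A\cup N_G(A))$ does it, since $|N_G(A)|\le 3|A|$ forces the latter set to have at least $n-4|A|$ vertices and no edges to $A$; and if the rotation-endpoint sets are linear in $n$ one runs the same device on a suitable $B_a$, after one further round of rotation if needed. It is precisely this bookkeeping, together with the constant $4130$ (chosen so that $d$ times \textbf{P1}'s guarantee at scale $k_1$ lands exactly on \textbf{P2}'s threshold $k_2$), where the numerology of the statement gets used. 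I expect this last step --- extracting two genuinely large mutually non-adjacent sets from the rotation structure, rather than the ``one vertex versus a large set'' that a single rotation delivers --- to be the main technical obstacle.
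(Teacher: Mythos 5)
The paper does not prove this lemma --- it is imported as a black box from \cite{HKS1} (Hefetz, Krivelevich and Szab\'o, \emph{Hamilton cycles in highly connected and expanding graphs}, Combinatorica 2009), where it appears as Theorem~1.1 with a proof several pages long. So there is no ``paper's own proof'' to compare against; what you have written is an independent attempt at the cited theorem.

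As a standalone argument your sketch has a genuine gap, and it is the one you flag yourself. After one round of P\'osa rotation you get a set $A$ of rotation endpoints with $N_G(A)\subseteq V(P)$, $|N_G(A)|\le 3|A|$ and $|A|\ge k_2$, and for each $a\in A$ an analogous set $B_a$; you show that $a$ has no $G$-neighbour inside $B_a$. But that is a single vertex against a large set, not two disjoint sets each of size at least $k_2$ with no edge between them, which is what a violation of \textbf{P2} requires. Your fallback, taking $A$ together with $V\setminus(A\cup N_G(A))$, only works when $|A|$ is appreciably below $n/4$; yet P\'osa's lemma gives only an \emph{upper} bound on $|N(A)|$ in terms of $|A|$, not an upper bound on $|A|$, and nothing in \textbf{P1}, \textbf{P2} prevents the rotation-endpoint set of a near-Hamilton path from occupying a constant fraction of $n$ (it lives inside $V(P)$, which can have close to $n$ vertices). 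The same objection kills the proposed switch to some $B_a$: if all these sets are linear in $n$, the complement $V\setminus(B_a\cup N(B_a))$ can be too small or empty. Converting the rotation structure into two genuinely disjoint, mutually non-adjacent large sets is exactly where \cite{HKS1} expends its real effort (iterated ``double'' rotations and a careful counting argument), so while the scaffolding --- P\'osa rotation, the connectivity step, and the numerical coincidence $dk_1=4130\,k_2$ --- is the right frame, the decisive step of the proof is missing.

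One smaller inaccuracy worth noting: when you apply \textbf{P1} to a $k_1$-subset $S\subseteq A$, the set $N(S)$ may intersect $A$ itself, so you cannot conclude $|N_G(A)|\ge dk_1$ directly; you only get $|A\cup N(A)|\ge dk_1$, which combined with $|N(A)|\le 3|A|$ still yields $|A|\ge dk_1/4>k_2$. This does not affect the conclusion $|A|\ge k_2$, but the intermediate line $|N_G(A)|\ge 4130\,k_2$ as written does not follow.
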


Clearly, if by the end of the game Avoider's graph is Hamiltonian,
it also contains a perfect matching. In addition, the proof of
Theorem 6 in \cite{KS} shows that, in the terms of Lemma
\ref{HamCrit}, if $G$ satisfies $\textbf{P1}$ and $\textbf{P2}$ then
$G$ is $d$-connected. In particular, if $d=\omega(1)$, then $G$ is
$k$-connected for any fixed $k$. Theorem~\ref{EnforcerWin} is now an
immediate corollary of Lemma~\ref{HamCrit} and the following
theorem:

\begin{theorem}\label{ForceExpander}
Let $p \geq \frac{70000\ln n}{n}$ and $b \leq \frac{np}{20000\ln
n}$. In a biased $(1,b)$ Avoider-Enforcer game, Enforcer has a
strategy to force Avoider to create a graph satisfying \textbf{P1}
and \textbf{P2} with $d = d(n) = \ln\ln n$ provided $n$ is large
enough.
\end{theorem}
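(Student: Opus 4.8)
The plan is to have Enforcer force Avoider to claim \emph{many} edges, spread out over the whole board, and then argue that $G\sim\gnp$ is typically such that \emph{any} large, well-spread subgraph of $G$ automatically satisfies \textbf{P1} and \textbf{P2} with $d=\ln\ln n$. The first ingredient is purely a game-theoretic statement: since Enforcer's bias is $b$ and the board has $e(G)=\Theta(n^2p)$ free edges, Enforcer can make sure that after the game Avoider's graph $M$ has at least, say, $\frac{e(G)}{b+1}=\Theta(n\ln n)$ edges, and more importantly that \emph{no small vertex set carries too few of Avoider's edges}. Concretely, Enforcer plays a pairing/box-type strategy on the stars of $G$: partitioning the free edges at each vertex $v$ into blocks of size roughly $b+1$, Enforcer claims (at most) $b$ edges of a block and thereby forces Avoider to take the last one, guaranteeing $d_M(v)\geq \frac{d_G(v)}{2(b+1)}\geq \frac{(1-o(1))np}{2(b+1)}\geq 10000\ln n$ for every $v$ (using $(P1)$ and $b\leq \frac{np}{20000\ln n}$). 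One must check these local strategies can be run simultaneously; this is the same bookkeeping as in the minimum-degree arguments of \cite{KS}, and I would invoke it in that form.

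The second ingredient is deterministic and uses the properties of $\gnp$. Suppose Avoider's final graph $M$ satisfies $d_M(v)\geq 10000\ln n$ for all $v$. To verify \textbf{P2} for $d=\ln\ln n$: here $k_2(n,d)=\Theta\!\left(\frac{n\ln\ln n\ln\ln\ln n}{\ln n \ln\ln\ln n}\right)=\Theta\!\left(\frac{n\ln\ln n}{\ln n}\right)$, so I need an $M$-edge between any two disjoint sets $A,B$ of this size. By $(P7)$ (with the appropriate $\eps,\alpha$, legitimate since $f(n)=\omega(1)$ when $p\geq\frac{70000\ln n}{n}$... and when $f(n)=\Theta(1)$ one uses $(P8)$-type estimates) we have $e_G(A,B)=(1-o(1))|A||B|p$, which hugely exceeds the total number of non-$M$ edges Enforcer ever claimed, hence $E_M(A,B)\neq\emptyset$. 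For \textbf{P1}, where $k_1(n,d)=\Theta\!\left(\frac{n\ln\ln n}{\ln n}\right)$ as well and I must show $|N_M(S)|\geq d|S|=|S|\ln\ln n$ for all $|S|\leq k_1$: the key point is that Avoider cannot have ``wasted'' his $\geq 10000\ln n$ edges at each $v\in S$ inside a small neighborhood, because $(P3)$ bounds $e_G(S\cup N(S))$ — if $|N_M(S)|<|S|\ln\ln n$ then $S\cup N_M(S)$ has at most $2|S|\ln\ln n$ vertices but at least $\frac{1}{2}|S|\cdot 10000\ln n=5000|S|\ln n$ edges of $G$, contradicting $(P3)$'s bound of $O\!\left(|S\cup N_M(S)|\cdot f(n)\ln\ln n\right)=O\!\left(|S|\ln\ln n\cdot\frac{np}{\ln n}\cdot\ln\ln n\right)$ once one checks the constants. (A separate, easier range $|S|\leq 1/p$ is handled directly by $(P4)$ and the min-degree bound.)

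The main obstacle I anticipate is the interface between the game and the deterministic step: Enforcer needs the guaranteed minimum-degree in Avoider's graph to be comparable to $np/\ln n$ (not merely $\Omega(\ln n)$), since it is the ratio of this to the ``wasted edges'' budget, controlled via $(P3)$, that must beat $\ln\ln n$ with room to spare; this is exactly why the hypothesis demands $p\geq\frac{70000\ln n}{n}$ and $b\leq\frac{np}{20000\ln n}$, and the constants have to be tracked honestly through the chain $d_M(v)\gtrsim np/(2b)\gtrsim 10000\ln n$ versus the $(P3)$-bound $\lesssim 100\cdot(\text{size})\cdot f(n)\ln\ln n$. A secondary nuisance is that Enforcer, being forced to claim \emph{at least} $b$ edges per move, may be compelled in the endgame to take edges he would rather avoid; but since by then almost all of $G$ is claimed and the min-degree guarantee was already locked in during the bulk of the game, this does not affect the conclusion. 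I would structure the write-up as: (1) state and prove the min-degree forcing lemma for Enforcer; (2) prove the two deterministic implications $(\delta_M\geq 10000\ln n)\Rightarrow\textbf{P1}$ and $\Rightarrow\textbf{P2}$ using $(P3)$, $(P4)$, $(P7)$/$(P8)$; (3) combine with Lemma~\ref{HamCrit} and the $d$-connectivity remark from \cite{KS} to finish Theorem~\ref{EnforcerWin}.
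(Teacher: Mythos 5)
Your proposal follows a genuinely different route from the paper's: you propose (i) a game lemma forcing $\delta(M)\geq 10000\ln n$ and (ii) a deterministic implication that any spanning subgraph of $G\sim\gnp$ with this minimum degree satisfies \textbf{P1} and \textbf{P2}. The paper instead sets up, for each small set $S$, an auxiliary hypergraph $\cF(S)$ on a partition of $N_G(S)$ into $2dk$ chunks, observes that $\textbf{P1}$ follows once Avoider has an edge from $S$ into every $dk$-chunk union $F\in\cF(S)$, shows $e_G(S,F)\geq \frac{1}{180}|S|np$ via $(P4)$ and Lemma~\ref{AtLeast80}, and then applies Lemma~\ref{AvoidersCrit} with the roles of the two players reversed (so Enforcer plays as "Avoider" with bias $b$, and the target sets are the boundary sets $E(S,F)$ and $E(A,B)$); $\textbf{P2}$ is handled by the same potential-function criterion. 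No min-degree lemma enters anywhere.

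The problem is that your deterministic step (ii) is false when $p$ is not close to the threshold $\Theta(\ln n/n)$, and the theorem covers all $p$ up to $1$. For \textbf{P1}: you derive a contradiction from $5000|S|\ln n > 200|S|(\ln\ln n)^2 f(n)$, i.e.\ $np < \frac{25(\ln n)^2}{(\ln\ln n)^2}$. That holds only for $np$ polylogarithmic; for constant $p$ (say $p=1/2$), $np=\Theta(n)$ and there is no contradiction. Concretely, for large $p$ take $U$ of size $\approx\frac{C\ln n}{p}$; a subgraph that contains $G[U]$ and has every other degree $\Theta(\ln n)$ can have $\delta\geq 10000\ln n$ yet $N(S)$ tiny for $S\subset U$, so minimum degree alone cannot imply \textbf{P1}. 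Strengthening to $d_M(v)\geq \frac{np}{2(b+1)}$ does not rescue it, since the needed inequality becomes $b+1 < \frac{\ln n}{800(\ln\ln n)^2}$ while $b$ may be as large as $\frac{np}{20000\ln n}\gg \frac{\ln n}{(\ln\ln n)^2}$. For \textbf{P2} the step is simply wrong as stated: "$e_G(A,B)$ hugely exceeds the total number of non-$M$ edges Enforcer ever claimed" cannot hold, since Enforcer claims $(1-o(1))e(G)=\Theta(n^2p)$ edges while $e_G(A,B)=\Theta\bigl((k_2^*)^2p\bigr)=\Theta\bigl(n^2p(\ln\ln n)^2/(\ln n)^2\bigr)=o(n^2p)$; Enforcer could in principle cover $E_G(A,B)$ entirely. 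The fundamental issue your reduction misses is that \textbf{P1}/\textbf{P2} are boundary (expansion) conditions, which a minimum-degree guarantee on Avoider's graph does not enforce once $p$ is large enough that $G$ contains small dense spots. The paper's construction attacks the boundary sets directly, which is why it works over the whole range of $p$.
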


\begin{proof}
As we set $d = d(n) = \ln\ln n$ we use the following notation:
$$k_1^* = k_1^*(n) = k_1(n,d) = \frac{n}{\ln n},$$
$$k_2^* = k_2^*(n) = k_2(n,d) = \frac{n\ln\ln n}{4130\ln n}.$$

For every $1 \leq k \leq k_1^*$ and for every $S \subseteq V$, $|S|
= k$, define the hypergraph $\cF(S)$ on $N(S)$ in the following way:
divide the vertices of $N(S)$ into $2dk$ subsets, each of size
$|N(S)| / 2dk$ (by $(P5)$ and $(P4)$ the size of $N(S)$ is much
greater than $2dk$, so this is well defined). Each combination of
$dk$ subsets forms a hyperedge in $\cF(S)$.

For a given $S$ of size $k$, if by the end of the game in Avoider's
graph $S$ is connected by an edge to every hyperedge of $\cF(S)$,
then $|N_A(S)| > dk$. Otherwise, there are $dk$ subsets disconnected
from $S$ which form a hyperedge in $\cF(S)$, in contradiction. So in
order to force $\textbf{P1}$ in Avoider's graph, it suffices to
ensure that in his graph, for every $1 \leq k \leq k_1^*$, for every
$S \subseteq V$, $|S| = k$, and for every $F \in \cF(S)$, there is
an edge between $S$ and $F$.

Notice that for every $F \in \cF(S)$, $e(S,F) \geq
\frac{1}{180}|S|np$. Indeed, if $1 \leq |S| \leq \frac{80}{p}$, then
by (P4) $N(S) \geq \frac{1}{90}|S|np$, and so the number of edges in
$G$ between $S$ and half of its external neighborhood is at least
the number of vertices there, which is at least
$\frac{1}{180}|S|np$. If $\frac{80}{p} \leq |S| \leq \frac{n}{\ln
n}$, then by Lemma~\ref{AtLeast80} the number of edges in $G$
between $S$ and half of its external neighborhood is at least
$\frac{1}{50}|S|np > \frac{1}{180}|S|np$.

In order to force $\textbf{P2}$ in Avoider's graph, it is enough to
ensure that he claims an edge between any two disjoint sets of size
$k_2^*$. By $(P7)$, for any such sets $A,B$, $e(A,B) \geq
0.5(k_2^*)^2p$.

Finally, in order to conclude that Enforcer can force Avoider to
claim all these edges, by Lemma~\ref{AvoidersCrit} it is sufficient
to verify that:
$$\sum_{k=1}^{k_1^*}\sum_{|S|=k}|\cF(S)|\left(1+\frac{1}{b}\right)^
{-\frac{1}{180}|S|np} +
\sum_{|A|,|B|=k_2^*}\left(1+\frac{1}{b}\right)^{-\frac{1}{2}(k_2^*)^2p}
< \left(1+\frac{1}{b}\right)^{-b}.$$

By using the well known estimate $1+x = e^{x+\Theta(x^2)}$ for $x
\rightarrow 0$, we can bound the term on the right hand side from
below by $e^{-\frac{1}{2}}$.

The first summand on the left hand side can be estimated from above
by:
$$\sum_{k=1}^{k_1^*}\binom{n}{k}\binom{2dk}{dk} e^{-\frac{knp}{180b}} \leq
\sum_{k=1}^{k_1^*}\left[n(2e)^de^{-\frac{np}{\frac{180np}{20000\ln
n}}}\right]^k \leq $$
$$\leq \sum_{k=1}^{k_1^*}\left[ne^{2\ln\ln n}e^{-100\ln
n}\right]^k = o(1).$$ The second summand on the left hand side can
be estimated from above by:
$$\binom{n}{k_2^*}^2e^{-\frac{0.5(k_2^*)^2p}{b}} \leq
\left[\left(\frac{en}{k_2^*}\right)^2e^{-\frac{10^4k_2^*\ln
n}{n}}\right]^{k_2^*} =$$
$$=\left[\left(\frac{4130e\ln n}{\ln\ln
n}\right)^2e^{-\frac{10^4}{4130}\ln\ln n}\right]^{k_2^*} \leq
\left[\vphantom{\left(\frac{4130e\ln n}{\ln\ln n}\right)^2}\left(\ln
n\right)^{2-2.4}\right]^{k_2^*} = o(1).$$ This completes the proof.
\end{proof}

\section{Concluding remarks and open questions}
In this paper we analyzed Maker-Breaker games and Avoider-Enforcer
games played on the edge set of a random graph $G\sim \gnp$. We have
shown the following:

\textbf{Maker-Breaker games:} for $p=\omega(\frac{\ln n}{n})$, the
critical bias in the Hamiltonicity, perfect matching and
$k$-connectivity games is $b^*=\frac{\ln n}{n}$. For $p=\frac{c\ln
n}{n}$ (where $c>1$), there exist $b_1=b_1(c)$ and $b_2=b_2(c)$ such
that the critical bias in these games satisfies: $b_1 \leq b^* \leq
b_2$.

\textbf{Avoider-Enforcer games:} for $p \geq \frac{c\ln n}{n}$
(where $c>70000$), there exist $c_1$ and $c_2$ such that the
critical bias in the Hamiltonicity, perfect matching and
$k$-connectivity games satisfies: $\frac{c_1\ln n}{n} \leq b^* \leq
\frac{c_2\ln n}{n}$.

Notice that while in the first case (Maker-Breaker with
$p=\omega(\frac{\ln n}{n})$) we establish the exact threshold bias,
in the latter two (Maker-Breaker with $p=\Theta(\frac{\ln n}{n})$,
and Avoider-Enforcer) we only establish the order of magnitude of
the threshold bias. Although it is possible to achieve somewhat
better constants than those appearing in this paper, we were not
able to close the gap completely. It would be nice to get to the
exact constant in these cases as well.

\end{document}